\def\norm#1{\left\|#1\right\|}
\def\bra#1{\langle#1\rangle}
\def\wt#1{\widetilde{#1}}
\def\wh#1{\widehat{#1}}
\def\owh#1{\overline{\widehat{#1}}}
\def\set#1{\{#1\}}
\newcommand{\R}{{\mathbb R}}
\newcommand{\C}{{\mathbb C}}
\newcommand{\T}{{\mathbb T}}
\newcommand{\Z}{{\mathbb Z}}
\newcommand{\N}{{\mathcal{N}}}
\newcommand{\ft}{{\mathcal{F}}}
\newcommand{\Sch}{{\mathcal{S}}}
\newcommand{\px}{\partial_x}
\newcommand{\pt}{\partial_t}
\newcommand{\la}{\lambda}
\newcommand{\ep}{\epsilon}
\newcommand{\ol}{\overline}
\newcommand{\ul}{\underline}
\numberwithin{equation}{section}
\newtheorem{theorem}{Theorem}[section]
\newtheorem{proposition}[theorem]{Proposition}
\newtheorem{lemma}[theorem]{Lemma}
\newtheorem{corollary}[theorem]{Corollary}
\newtheoremstyle{definition}{}{}%
     {}
     {}
     {\bfseries}
     {. }
     {0em}
     {}
\theoremstyle{definition}
\newtheorem{remark}[theorem]{Remark}
\begin{document}
\title[Fourth-order cubic NLS]{Periodic fourth-order cubic NLS: Local well-posedness and Non-squeezing property}


\author[C. Kwak]{Chulkwang Kwak}
\email{chkwak@mat.uc.cl}
\address{Facultad de Matem\'aticas Pontificia Universidad Cat\'olica de Chile, Campus San Joaquín. Avda. Vicuña Mackenna 4860, Santiago, Chile}

\begin{abstract}
In this paper, we consider the cubic fourth-order nonlinear Schr\"odinger equation (4NLS) under the periodic boundary condition. We prove two results. One is the local well-posedness in $H^s(\T)$ with $-1/3 \le s < 0$ for the Cauchy problem of the Wick ordered 4NLS. The other one is the non-squeezing property for the flow map of 4NLS in the symplectic phase space $L^2(\T)$. To prove the former we used the ideas introduced in \cite{TT2004} and \cite{NTT2010}, and to prove the latter we used the ideas in \cite{CKSTT2005}. 
\end{abstract}

\thanks{} \subjclass[2010]{35Q55,70H15} \keywords{fourth-order NLS, Wick ordered NLS, local well-posedness, non-squeezing property}

\maketitle

\tableofcontents

\section{Introduction}


In this paper, we consider the following one-dimensional cubic fourth-order Schr\"odinger equation (4NLS) with the periodic boundary condition:
\begin{equation}\label{eq:NLS1}
\begin{cases}
i\pt u + \px^4 u =  \mu|u|^2u,\\
u(0,x) = u_0(x) \in H^s(\T),
\end{cases}
\quad (t,x) \in \R \times \T,
\end{equation}
where $\mu = \pm 1$\footnote{Here $\pm1$ does not affect our analysis, so we only consider the case when $\mu = 1$.}, $\T=\R/(2\pi\Z)$ and $u$ is a complex-valued function. The equation \eqref{eq:NLS1} is known as the biharmonic NLS and for its physical backgrounds see \cite{Karpman1996, KS2000} and the references therein. 
As a physical model, the equation \eqref{eq:NLS1} has many symmetries. The first one is the scaling invariance (although not strict): if $u$ is a solution to \eqref{eq:NLS1}, $u_{\lambda}$ is also a solution to \eqref{eq:NLS1} on the stretched torus $\T_\lambda=\R/(2\pi \lambda^{-1}\Z)$, where
\[u_{\lambda}(t,x) = \lambda^2 u(\lambda^4 t , \lambda x), \quad \lambda >0.\]
Moreover, 
$\norm{u_{0,\lambda}}_{\dot{H}^s(\T_{\lambda})} = \lambda^{s+3/2}\norm{u_0}_{\dot{H}^s(\T)}$,
which shows the scaling critical exponent $s_c = - 3/2$. The equation \eqref{eq:NLS1} also admits important conservation laws, such as mass and Hamiltonian conservation laws: if $u$ is a smooth solution to \eqref{eq:NLS1} then
\begin{equation}\label{eq:mass}
M[u(t)] = \int_{\T} |u|^2 \; dx = M[u_0]
\end{equation}
and
\begin{equation}\label{eq:Hamiltonian}
H[u(t)] = \frac12\int_{\T} |\px^2 u|^2 \; dx - \frac{\mu}{4}\int_{\T} |u|^4 \; dx = H[u_0].
\end{equation}

The fourth-order NLS has been extensively studied. See \cite{FIP2003,BKS2000,GW2002,HHW2006,HHW2007,Segata2006,Pausader2007,Pausader2009-1,Pausader2009-2} and references therein for results of Cauchy problems on $\R^n$, $n\ge1$. Global well-posedness of \eqref{eq:NLS1} in $L^2$ and non-existence below $L^2$ were proved in Appendix A in \cite{OT2016} where the quasi-invariant measure for \eqref{eq:NLS1} was studied. This well-/ill-posedness result is analogue to the classical cubic NLS ($\partial_x^4$ replaced by $-\partial_x^2$ in \eqref{eq:NLS1}). The well-posedness was obtained via Bourgain's space method \cite{Bourgain1993} and the ill-posedness was based on the study of wick ordered equations as in \cite{GO2015}.

\subsection{Wick ordered fourth-order cubic NLS}\label{sec:WNLS}
For the global solution $u \in C(\R;L^2(\T))$ to the equation \eqref{eq:NLS1}, we consider the gauge transformation $\mathcal{G}$ defined by
\begin{equation}\label{eq:gauge}
v=\mathcal{G}[u](t) := e^{2it\oint |u(t,x)|^2 \; dx}u(t),
\end{equation}
where 
\[\oint f \; dx := \frac{1}{2\pi}\int_{\T} f \; dx.\]
It is noticed that the gauge transformation $\mathcal{G}$ defined in \eqref{eq:gauge} depends only on $u_0$ because of the mass conservation, as long as $u_0 \in L^2(\T)$. Besides, \eqref{eq:gauge} reduces the equation \eqref{eq:NLS1} to the following \emph{Wick ordered cubic fourth-order NLS} (4WNLS):
\begin{equation}\label{eq:WNLS1}
\begin{cases}
i\pt v + \px^4 v =  \left(|v|^2 - 2\oint|v|^2 \;dx\right)v,\\
v(0,x) = v_0(x),
\end{cases}
\quad (t,x) \in \R \times \T.
\end{equation}
Compared to \eqref{eq:NLS1}, the equation \eqref{eq:WNLS1} contains fewer resonant terms (see Section \ref{sec:preliminaries}). Moreover, in $L^2$ or $H^s$ with $s\geq 0$ the gauge transformation $\mathcal{G}$ is well defined and invertible, and thus one can freely transfer the results between the two equations. 

The key ideas in \cite{GO2015} which were later employed in \cite{OT2016} are: below $L^2$ the equation \eqref{eq:WNLS1} behaves better than \eqref{eq:NLS1} while $\mathcal{G}$ is not defined. One can combine the good behaviour of \eqref{eq:WNLS1} and the bad property of $\mathcal{G}$ to show some strong ill-posedness for \eqref{eq:NLS1} below $L^2$. To show the non-existence, one only needs the existence for the wick ordered equation \eqref{eq:WNLS1} below $L^2$.  However, we can prove well-posedness below $L^2$.
The following is the primary result in this paper.

\begin{theorem}\label{thm:main}
Let $-1/3 \le s < 0$ and $R>0$. For any $u_0 \in B_R=\{f\in H^s(\T): \norm{f}_{H^s}\leq R\}$, there exist $T = T(R)>0$ and a unique solution to \eqref{eq:WNLS1} on $[-T, T]$ satisfying\footnote{The solution space $Y_T^{s,\frac12}$ is defined in Section \ref{sec:preliminaries} .}
\[u \in C([-T,T];H^s(\T)) \cap Y_T^{s,\frac12}.\]
Moreover, the solution map $u_0 \mapsto u$ is continuous from $B_R$ to $C([-T,T];H^s(\T))$.
\end{theorem}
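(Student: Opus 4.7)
My plan is to run a standard Picard iteration on the Duhamel formulation
\[v(t) = e^{it\px^4}v_0 - i\int_0^t e^{i(t-t')\px^4}\MC{N}(v)(t')\,dt',\qquad \MC{N}(v) = \Big(|v|^2 - 2\oint|v|^2\,dx\Big)v,\]
in a ball of the Bourgain-type space $Y_T^{s,\frac12}$. After the (by-now standard) homogeneous and inhomogeneous linear estimates in $Y_T^{s,\frac12}$, the entire argument reduces to proving a single trilinear estimate
\[
\Big\|\int_0^t e^{i(t-t')\px^4}\MC{N}(v_1,v_2,v_3)(t')\,dt'\Big\|_{Y_T^{s,\frac12}} \le C\,T^\theta \prod_{j=1}^3 \|v_j\|_{Y_T^{s,\frac12}}
\]
for some $\theta>0$, where $\MC{N}(v_1,v_2,v_3)=v_1\overline{v_2}v_3 - 2\big(\oint v_1\overline{v_2}\,dx\big)v_3$ is the symmetric trilinearization. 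Contraction on a ball of radius $\sim R$ in $Y_T^{s,\frac12}$ then produces existence, uniqueness, and continuity of the solution map on $[-T,T]$ with $T=T(R)>0$.

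For the trilinear estimate I would pass to Fourier variables and analyze the resonance function $\Phi(\nnn,n)=n_1^4-n_2^4+n_3^4-n^4$ on the hyperplane $n=n_1-n_2+n_3$. A short algebraic computation shows
\[
\Phi \;=\; (n-n_1)(n-n_3)\,Q(\nnn,n),
\]
where $Q$ is a nondegenerate quadratic polynomial; in particular $\Phi$ vanishes exactly on the resonant set $\{n_1=n\}\cup\{n_3=n\}$. On this set, a direct Fourier computation shows that the Wick correction $-2(\oint|v|^2\,dx)v$ is designed to cancel the leading self-interaction, leaving only the pointwise remainder $-|\hat v(n)|^2\hat v(n)$ which is handled directly using $\|v\|_{L^2}$ bounds coming from mass conservation. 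Off this set one has the quantitative lower bound
\[
|\Phi| \;\gtrsim\; |n_1-n_2|\,|n_2-n_3|\,N_{\max}^2,\qquad N_{\max}=\max_j\bra{n_j},
\]
which, via a dyadic decomposition in both spatial frequencies and modulations, is combined with $L^4_{t,x}$-type Strichartz bounds for $e^{it\px^4}$ on $\T$ (proved by a divisor-type count for $n^4=m$) and with direct counting of lattice points in level sets $\{|\Phi-\mu|\le 1\}$. A scaling heuristic identifies $s=-1/3$ as the endpoint regularity at which these bounds just close.

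The main obstacle, and the reason for working with $Y_T^{s,\frac12}$ rather than the plain Bourgain space $X_T^{s,\frac12}$, is the endpoint $b=\tfrac12$: $X^{s,\frac12}$ fails to embed into $C_tH^s$ and does not yield the small-time factor $T^\theta$ required to close the contraction. Following the strategy of \cite{TT2004} and \cite{NTT2010}, I would therefore enlarge the solution space by adjoining an $\ell^2_n L^1_\tau$-type norm weighted by $\bra{n}^s$; this component restores continuity in time and, through a sharp time cutoff, supplies the desired $T^\theta$. The most delicate case in the trilinear estimate will be the one in which two factors carry very small modulation while the third absorbs essentially all of it: bounding this contribution in the $\ell^2_n L^1_\tau$ piece, uniformly down to the endpoint $s=-1/3$, is where the technical heart of the argument concentrates.
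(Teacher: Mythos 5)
There is a genuine gap, and it is at the level of the overall strategy rather than a technical step. A Picard iteration cannot prove Theorem \ref{thm:main}. The Wick ordering removes only the term $-2i\norm{u}_{L^2}^2\wh u(n)$; the resonant term $i|\wh v(n)|^2\wh v(n)$ survives in \eqref{eq:WNLS2}, and for $s<0$ the trilinear estimate you need for it fails: $\bra{n}^{s}|\wh v(n)|^{3}=\bigl(\bra{n}^{s}|\wh v(n)|\bigr)^{3}\bra{n}^{-2s}$ with $-2s>0$, so it cannot be bounded by $\norm{v}_{H^s}^3$ (the paper's Lemma \ref{lem:resonant} is stated only for $s\ge 0$ for exactly this reason). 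Your proposed fix via ``$\norm{v}_{L^2}$ bounds coming from mass conservation'' is vacuous below $L^2$: the data only lie in $H^s$, $s<0$, so $\norm{v}_{L^2}$ is not controlled. More structurally, any contraction argument on the Duhamel formulation produces a locally Lipschitz (indeed analytic) data-to-solution map, while the solution map of \eqref{eq:WNLS1} is known to fail to be uniformly continuous below $L^2(\T)$ (Remark 1.2, citing Lemma 6.16 of \cite{OT2016}). Hence no choice of iteration space of the kind you describe can close; the endpoint issue at $b=\tfrac12$ and the $T^{\theta}$ factor are not the real obstruction. You have also misidentified the space: $Y^{s,\frac12}$ in this paper is not $X^{s,\frac12}$ augmented by an $\ell^2_nL^1_\tau$ component (that is $Z^{s,\frac12}$, used only in the $s\ge0$ non-squeezing part); it is the Bourgain space adapted to the data-dependent phase $\mu(n)=n^4+|\wh u_0(n)|^2$.

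What is actually needed, and what the paper does, is to renormalize the resonant term rather than estimate it: conjugating by the first-order approximation $e^{it|\wh u_0(n)|^2}$ turns it into $i(|\wh u(n)|^2-|\wh u_0(n)|^2)\wh u(n)$, and the heart of the proof is the a priori bound on $\sup_n\bigl||\wh u(n)|^2-|\wh u_0(n)|^2\bigr|$ obtained by normal form reduction (integration by parts in time exploiting $|H|\gtrsim|n_1-n_2||n_2-n_3|(n^{\ast})^2$), i.e.\ Proposition \ref{prop:main} and Corollary \ref{cor:main}, with the $\ell^\infty_n$ (rather than $\ell^1_n$) formulation giving the endpoint $s=-1/3$. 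Existence then comes from a compactness argument on smooth approximating solutions (uniform a priori bound, tightness \eqref{eq:tightness}, Arzel\`a--Ascoli), not from iteration; uniqueness requires a separate difference estimate (Proposition \ref{prop:main2}), which cannot be obtained by rerunning the single-solution argument because the $H^s$ energy estimate is unavailable for the difference, and needs the auxiliary $H^{-2/3}$ bound of Lemma \ref{lem:sym}; continuity of the flow is then shown by revisiting the approximation scheme. None of these ingredients are present in your outline, and they are forced by the failure of the trilinear estimate at the resonance, so the proposal as written does not prove the theorem.
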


In the proof of Theorem \ref{thm:main}, we follow the ideas in \cite{TT2004} and \cite{NTT2010}. In these works, low-regularity well-posedness for periodic mKdV was proved by using a new $X^{s,b}$-type space associated to the initial data and some energy-type estimates. This idea has been also applied to the well-posedness of NLS with the third order dispersion by Miyaji and Tsutsumi \cite{MT2017}. We would like to point two different views in this paper out  from the previous works. The main task in this part is to control the resonant terms
\begin{equation}\label{eq:task1}
\left||\wh{u}(n)|^2 - |\wh{u}_0(n)|^2\right|
\end{equation}
for \emph{a priori bound} of a single solution, and 
\begin{equation}\label{eq:task2}
\left||\wh{u}(n)|^2 - |\wh{v}(n)|^2\right|
\end{equation}
for the uniqueness part (see Section \ref{sec:preliminaries}, \eqref{eq:gap} and \eqref{eq:diff}). In previous works \cite{TT2004, NTT2010, MT2017}, authors control the $\ell^1$-norm of \eqref{eq:task1}, while we control the $\ell^{\infty}$-norm of \eqref{eq:task1}. We realized the advantage from the gap between $\ell^{\infty}$-norm and the $\ell^1$-norm,
and it enabled us to cover the end point regularity (see Remark \ref{rem:threshold}). This way seems to be applicable to \cite{TT2004, NTT2010} and \cite{MT2017} for covering the endpoint regularity $s = 1/3$ and $s = -1/6$, respectively. The idea can be also seen in \cite{MPV2016} where the unconditional well-posedness of modified KdV is shown for $s \ge 1/3$. Moreover, we perceived that it is nontrivial to apply the estimation of \eqref{eq:task1} to the estimation of \eqref{eq:task2}. Indeed, in view of the proof of Proposition \ref{prop:main}, the $H^s$ energy estimate of a single solution is necessary to control the contribution of the boundary term arising in the normal form reduction method. However, in the estimate of \eqref{eq:task2}, not only a single solution but also the difference of two solutions appear in the contribution of the boundary term, while one cannot obtain the $H^s$-energy estimate of the difference of two solutions for $s \ge -1/3$. We took a trick to resolve this nontrivial issue, and hence we closed the estimate of \eqref{eq:task2}. See Remark \ref{rem:nontrivial}, the proof of Proposition \ref{prop:main2} and Lemma \ref{lem:sym} for the details.

\begin{remark}
The fourth-order cubic NLS \eqref{eq:NLS1} was already known to be ill-posed in $H^s$ for $s<0$ in the sense of the failure of uniform continuity of solution map by following the argument in Burq-G\'erard-Tzvetkov \cite{BGT2002} and Christ-Colliander-Tao \cite{CCT2003} (see Lemma 6.16 in \cite{OT2016}). Moreover, in view of Remark 1.4 in \cite{CO2012} (or by simple calculation), we can know that the example chosen in \cite{OT2016} for the focusing (or defocusing) 4NLS \eqref{eq:NLS1} is exactly the solution to the defocusing (or focusing) 4WNLS \eqref{eq:WNLS1}. Hence, Lemma 6.16 in \cite{OT2016} exactly shows the failure of uniform continuity of the solution map of \eqref{eq:WNLS1} below $L^2(\T)$. 
\end{remark}

Recently, we learned some results concerning well-posedness of \eqref{eq:WNLS1}. Oh and Wang \cite{OW2017} proved the global existence in $H^s$, $s > -9/20$ by using the short time Fourier restriction norm method, and showed the enhanced uniqueness in $H^s$, $s>-1/3$ by using the infinite iteration of normal form reductions. Also Oh, Tzvetkov and Wang \cite{OTW2017} proved almost surely global well-posedness in $H^s$, $s < -1/2$.

\subsection{Non-squeezing property of NLS}\label{subsec:Nonsq prop}

The non-squeezing phenomena is an important property of the Hamiltonian system. It states that one cannot embed a ball into a cylinder via a symplectic map unless the radius of the ball is less than or equal to the radius of the cylinder although the symplectic map is area-preserving. The finite-dimensional non-squeezing theorem was established by Gromov \cite{Gromov1985}. The extension to the infinite-dimensional setting was initiated by Kuksin \cite{Kuksin1995} for certain equations, whose flow maps consist of linear and compact smooth operators. Thereafter, the study on the symplectic property of the flow map, which has non-compact nonlinearity, was extended by several researchers. Bourgain \cite{Bourgain1994} presented the non-squeezing analysis for the 1-dimensional cubic NLS on the symplectic phase space $L^2(\T)$, and Colliander, Keel, Staffilani, Takaoka and Tao \cite{CKSTT2005} showed the non-squeezing property for the KdV equation on $H^{-\frac12}(\T)$. Recently, the non-squeezing properties were proved for many other equations, such as the BBM equation on $H^{\frac12}(\T)$ by Roum\'egoux \cite{Roumegoux2010}, the Klein-Gordon equation on $\mathcal{H}^{\frac12}(\T^3) = H^{\frac12}(\T^3) \times H^{-\frac12}(\T^3)$ by Mendelson \cite{Mendelson2014}, the coupled KdV-type system without the Miura transform on $H^{-\frac12}(\T) \times H^{-\frac12}(\T)$ by Hong and Kwon \cite{HK2015} and the higher-order KdV equation with the nonlinearity of the form $u\px u$ on $H^{-\frac12}(\T)$ by Hong and the author \cite{HK2016}. Very recently, Killp, Visan and Zhang \cite{KVZ2016-1, KVZ2016-2} introduced the first symplectic non-squeezing result for a Hamiltonian PDE in infinite volume, in particular, the cubic NLS on $L^2(\R^2)$ and $L^2(\R)$.

The second result in this paper is the non-squeezing property of \eqref{eq:NLS1}. The equation \eqref{eq:NLS1} can be written as the Hamiltonian form
\[\dot u(t)=i\nabla_u H[u]\]
on the phase space $L^2(\T)$ with the symplectic form $\omega_0$ defined by
\[\omega_0(u,v) =-\mbox{Im}\int_\T u\bar v dx.\]
The flow map $\Phi(t)$ of \eqref{eq:NLS1}, particularly, is symplectomorphism on $L^2(\T)$. We prove
\begin{theorem}\label{thm:Nonsqueezing thm}
Let $ 0 < r< R$, $u_* \in L^2(\T)$, $n_0 \in \Z$, $z \in \C$ and $T>0$. Then there exists a global $L^2$-solution $u$ ($:=\Phi(t)u_0$) to \eqref{eq:NLS1} such that
\begin{equation*}
\norm{u_0 - u_*}_{L^2} \leq R
\end{equation*}
and 
\begin{equation*}
\left| \ft_x[\Phi(T)u_0](n_0) - z\right| >r,
\end{equation*}
where $\ft_x$ and $\Phi$ are the spatial Fourier transform and the flow map of \eqref{eq:NLS1}, respectively.
\end{theorem}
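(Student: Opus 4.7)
Following the strategy of Colliander-Keel-Staffilani-Takaoka-Tao \cite{CKSTT2005}, I argue by contradiction and transfer Gromov's finite-dimensional non-squeezing theorem to the $L^2$ flow $\Phi$ of \eqref{eq:NLS1} through a symplectic finite-dimensional truncation together with a quantitative approximation lemma. The two main building blocks are a Hamiltonian truncation to which Gromov's theorem applies, and an approximation estimate for a single designated Fourier coefficient.

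Let $P_N$ denote the sharp Fourier projector onto $\{|n|\leq N\}$, $P_{>N}=I-P_N$, and $E_N=P_N L^2(\T)$. The truncated equation
\[
i\pt v+\px^4 v=P_N(|v|^2 v),\qquad v(0)\in E_N,
\]
is the Hamiltonian flow on $E_N$ generated by $H_N[v]=\tfrac12\|\px^2 v\|_{L^2}^2-\tfrac14\int_{\T}|v|^4\,dx$, so its global flow $\Phi_N(t)$ is a symplectomorphism of $(E_N,\omega_0|_{E_N})\cong(\C^{2N+1},\omega_{\mathrm{std}})$. Gromov's non-squeezing theorem then provides, for every closed ball of radius $R'$ in $E_N$ centered at $P_N u_*$, a point whose image under $\Phi_N(T)$ has $n_0$-th Fourier coefficient at distance strictly greater than $r'$ from $z$ whenever $r'<R'$.

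The heart of the argument is the approximation lemma: given $\varepsilon>0$, there is $N$ so large that, for every initial datum of the form $u_0=v_0+P_{>N}u_*$ with $v_0\in E_N$ and $\|v_0-P_N u_*\|_{L^2}\leq R'$,
\[
\bigl|\ft_x[\Phi(T)u_0](n_0)-\ft_x[\Phi_N(T)v_0](n_0)\bigr|<\varepsilon.
\]
To produce this estimate I would employ the scaling $u_\lambda(t,x)=\lambda^2 u(\lambda^4 t,\lambda x)$, which yields $\|u_{0,\lambda}\|_{L^2(\T_\lambda)}=\lambda^{3/2}\|u_0\|_{L^2(\T)}$, to pass to a small-$L^2$-data regime on the enlarged torus $\T_\lambda$, then iterate a short-time approximation over finitely many subintervals whose length is dictated by the $L^2$ local well-posedness time of \cite{OT2016} (using mass conservation to keep the data size bounded at each iteration). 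On each subinterval, the PDE for the difference $w=\Phi(t)u_0-\Phi_N(t)v_0$ splits into contributions from the high-frequency correction $P_{>N}u_*$ (small in $L^2$ because $u_*\in L^2$) and contributions paired linearly with $w$; both are closed by the trilinear estimate in the Bourgain space adapted to the symbol $\tau-\xi^4$ from \cite{OT2016}, the latter being absorbed through a Gronwall-type step.

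Once this approximation lemma is in hand, assume the conclusion of Theorem \ref{thm:Nonsqueezing thm} fails. Then for every admissible $u_0=v_0+P_{>N}u_*$ one has $|\ft_x[\Phi(T)u_0](n_0)-z|\leq r$, and the approximation lemma forces $|\ft_x[\Phi_N(T)v_0](n_0)-z|\leq r+\varepsilon$ uniformly for $v_0$ in the $R'$-ball of $E_N$ around $P_N u_*$. Choosing $\varepsilon$ so small that $r+\varepsilon<R'<R$ contradicts Gromov's theorem applied to $\Phi_N(T)$, finishing the argument. The principal obstacle is the quantitative approximation lemma: it is the 4NLS analogue of the most technical part of \cite{CKSTT2005} and requires delicate control of high$\times$high$\to$low Fourier interactions near the single designated mode $n_0$, which is the reason for the scaling-and-iteration step rather than a direct continuous dependence argument.
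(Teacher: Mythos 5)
Your overall architecture coincides with the paper's: Gromov's theorem applied to the finite-dimensional Hamiltonian truncation (Lemma \ref{lem:Nonsqueezing of trun. flow}), an approximation statement for the designated Fourier mode (Proposition \ref{prop:approximation} and Corollary \ref{cor:main approx}), and a passage to the limit; whether one keeps the tail $P_{>N}u_*$ in the data and argues by contradiction, as you do, or takes the truncated datum $u_{0,N}$ and argues directly via the triangle inequality, as the paper does, is a cosmetic difference. The genuine gap is in your treatment of the approximation lemma, which is the heart of the proof. You propose to estimate the full difference $w=\Phi(t)u_0-\Phi_N(t)v_0$ by a perturbative/Gronwall argument in which the forcing is the small high-frequency correction $P_{>N}u_*$ and the remaining terms are linear in $w$. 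This cannot work as stated: the two evolutions differ not only in their data but in their nonlinearities, by $P_{>N}(|u|^2u)$ and by the high-frequency components generated by the full flow, and these are $O(1)$ in $L^2$-based norms. Consequently $w$ is not small in $L^2$, and a Gronwall step that absorbs terms linear in $w$ is circular; the approximation can only be proved for the low-frequency projection (here $P_{\le N^{1/2}}$, or the single mode $n_0$), and the proof must be organized around a low/high decomposition $u=u_{lo}+u_{hi}$ in which one shows that $u_{lo}$ and the truncated evolution satisfy the \emph{same} equation up to error terms, as in Lemmas \ref{lem:approx1} and \ref{lem:approx2}.

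Moreover, the quantitative source of the $N^{-\sigma}$ decay is missing from your sketch. The trilinear estimate underlying $L^2$ well-posedness in \cite{OT2016} carries no negative power of the frequency; what is needed is the refined non-resonant estimate with the $N_{max}^{-\frac12+}$ gain, \eqref{eq:bilinear-1}--\eqref{eq:bilinear-2} of Lemma \ref{lem:trilinear L2}, which comes from the resonance function bound $|H|\gtrsim |n_1-n_2|\,N_{max}^2$ in \eqref{eq:resonant function}, so that every non-resonant term containing at least one high-frequency factor is an error term of size $O(N^{-\sigma})$. The resonant part $\N_R$ enjoys no such gain and must be handled separately through its diagonal structure and mass conservation (Lemma \ref{lem:resonant}), which is what allows the difference of the resonant contributions to be estimated linearly in $u_{lo}-\ul{u_{lo}}$. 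Finally, the rescaling to $\T_\lambda$ you invoke is unnecessary at $L^2$ (time subdivision using the local theory plus mass conservation suffices, as in Remark \ref{rem:deduction}) and would in fact create extra work, since all multilinear estimates would then have to be proved uniformly in the period $\lambda$, an issue your proposal does not address. With the low/high decomposition, the $N_{max}^{-\frac12+}$ non-resonant gain, and the resonant/mass-conservation argument supplied, your contradiction scheme does yield the theorem.
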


In order to prove Theorem \ref{thm:Nonsqueezing thm}, our main task is to prove the approximation of a truncated flow to the original flow. Following Bourgain's approach, we consider the truncated equation:
\begin{equation}\label{eq:truncated equation}
\left\{\begin{array}{ll}
i\pt u + \px^4 u =  P_{\le N}\left(\mu|u|^2u\right), \hspace{1em} &(t,x) \in \R \times \T, \\
u(x,0) = u_0(x), &u_0 \in P_{\leq N}L^2(\T),
\end{array}
\right.
\end{equation}
where $P_{\leq N}$ is the Fourier projection operator for the spatial frequency defined as follows: for each dyadic number $N$,
\begin{equation}\label{eq:projection op}
\begin{aligned}
&\widehat{P_Nu}(n) := 1_{N \leq |n| <2N}(n) \widehat{u}(n), \\
&\widehat{P_{\leq N} u}(n) := 1_{|n| \leq N}(n) \widehat{u}(n), \\
&\widehat{P_{\geq N} u}(n) := 1_{|n| \geq N}(n) \widehat{u}(n),
\end{aligned}
\end{equation}
where $1_{\Omega}$ is a characteristic function on $\Omega$. The equation \eqref{eq:truncated equation} is a finite-dimensional Hamiltonian equation with Hamiltonian \eqref{eq:Hamiltonian} in $P_{\leq N}L^2$. Thus, the flow map of \eqref{eq:truncated equation} denoted by $\Phi^N(t)$ is a finite dimensional symplectic map, so we can apply Gromov's theorem directly to this map (see Lemma \ref{lem:Nonsqueezing of trun. flow}). Then by showing the low frequency approximation in the strong $L^2$-topology (see Proposition \ref{prop:approximation}), we can obtain the non-squeezing property of \eqref{eq:NLS1}. 

\textbf{Organization of paper.} The rest of the paper is organized as follows: In Section \ref{sec:preliminaries}, we summarize some notations and define function spaces. We also provide proofs of some important lemmas, which are used to prove both Theorems \ref{thm:main} and \ref{thm:Nonsqueezing thm}. In Section \ref{sec:main}, we mainly prove the local well-posedness of the Wick ordered fourth-order NLS \eqref{eq:WNLS1} below $L^2(\T)$. Finally, we prove the non-squeezing property of \eqref{eq:NLS1} in the symplectic phase space $L^2(\T)$ in Section \ref{sec:nonsqueezing}.

\textbf{Acknowledgments.}
The author would like to appreciate Prof. Zihua Guo for helpful discussion and encouragement. Also, the author thank Prof. Tadahiro Oh for pointing out an unclear portion in the proof of Proposition 3.5. C. Kwak is supported by FONDECYT de Postdoctorado 2017 Proyecto No. 3170067.

\section{Preliminaries}\label{sec:preliminaries}
For $x, y \in \R_+$, $x \lesssim y$ denotes $x \le Cy$ for some $C >0$ and $x \sim y$ means $x \lesssim y$ and $y \lesssim x$. Also, $x \ll y$ denotes $x \le cy$ for a small positive constant $c$. Let $a_1,a_2,a_3 \in \R_{+}$. The quantities $a_{max} \ge a_{med} \ge a_{min}$ can be defined to be the maximum, median and minimum values of $a_1,a_2,a_3$, respectively.

For $f \in \Sch '(\R \times \T) $ we denote by $\wt{f}$ or $\ft (f)$ the Fourier transform of $f$ with respect to both spatial and time variables,
\[\wt{f}(\tau,n)=\frac{1}{2\pi}\int_{\R}\int_{0}^{2\pi} e^{-ixn}e^{-it\tau}f(t,x)\; dxdt .\]
Moreover, we use $\ft_x$ (or $\wh{\;}$ ) and $\ft_t$ to denote the Fourier transform with respect to space and time variables, respectively.

We first observe the Fourier coefficient of \eqref{eq:NLS1} in terms of the spatial variable at the frequency $n$ as follows:
\begin{equation}\label{eq:NLS2}
\pt \wh{u}(n) -in^4 \wh{u}(n) = -i\sum_{n_1 - n_2 + n_3 = n}\wh{u}(n_1)\owh{u}(n_2)\wh{u}(n_3).  
\end{equation}
From the cubic resonant relation in the nonlinear term, we have (see Lemma 3.1 in \cite{OT2016} for the proof)
\begin{equation}\label{eq:resonant function}
\begin{aligned}
H&:= H(n_1,n_2,n_3) = n_1^4 - n_2^4 + n_3^4 - (n_1 - n_2 + n_3)^4\\
&=(n_1-n_2)(n_2-n_3)\left(n_1^2 + n_2^2 + n_3^2 + (n_1-n_2+n_3)^2 +2(n_1+n_3)^2 \right)
\end{aligned}
\end{equation}
and we can know that non-trivial resonances appear only when $n_1 = n_2$ or $n_2 = n_3$. Thus, we can rewrite \eqref{eq:NLS2} by
\[\pt \wh{u}(n) -in^4 \wh{u}(n) = i  |\wh{u}(n)|^2\wh{u}(n) -2i   \norm{u}_{L^2}^2\wh{u}(n) -i \sum_{\N_n}\wh{u}(n_1)\owh{u}(n_2)\wh{u}(n_3),\]
where $\N_n$ is the cubic non-resonant set of frequencies at the frequency $n$
\[\N_n = \set{(n_1,n_2,n_3) \in \Z^3 : n_1-n_2+n_3=n, \; (n_1-n_2)(n_2-n_3) \neq 0}.\]
Similarly, the equation \eqref{eq:WNLS1} (in terms of $u$ instead of $v$) can be rewritten as
\begin{equation}\label{eq:WNLS2}
\pt \wh{u}(n) -in^4 \wh{u}(n) = i  |\wh{u}(n)|^2\wh{u}(n) -i \sum_{\N_n}\wh{u}(n_1)\owh{u}(n_2)\wh{u}(n_3).
\end{equation}
Then we see the only resonant term in \eqref{eq:WNLS2} is $i|\wh{u}(n)|^2\wh{u}(n)$ while the worst resonant term $-2i   \norm{u}_{L^2}^2\wh{u}(n)$ is removed by Gauge transform \eqref{eq:gauge}.
We define 
\begin{equation}\label{eq:res. nonl.}
\N_{R}(u,v,w)(n) := \ft_x^{-1}[i\wh{u}(n)\ol{\wh{v}}(n)\wh{w}(n) - 2i (\sum_{n_1 \in \Z} \wh{u}(n_1)\ol{\wh{v}}(n_1))\wh{w}(n)]
\end{equation}
and 
\begin{equation}\label{eq:nonres. nonl.}
\N_{NR}(u,v,w)(n) := \ft_x^{-1}[-i \sum_{\N_n}\wh{u}(n_1)\owh{v}(n_2)\wh{w}(n_3)].
\end{equation}
We simply write $\N_R(u)$ and $\N_{NR}(u)$ for $\N_R(u,u,u)$ and $\N_{NR}(u,u,u)$, respectively. 

In \cite{TT2004, NTT2010} in the context of modified KdV equation, the authors modified the linear propagator by choosing the first approximation evolution operator with the oscillation factor $e^{itn|\wh{u}(0,n)|^2}$ in order to weaken the nonlinear perturbation $in|\wh{u}(n)|^2\wh{u}(n)$. In this paper, we use the same ideas as in \cite{TT2004, NTT2010} to weaken the resonant term 
\begin{equation}\label{eq:resonant term}
i  |\wh{u}(n)|^2\wh{u}(n).
\end{equation}
Precisely, in order to remove the non-trivial resonant term \eqref{eq:resonant term}, one needs to use the evolution operator $\mathcal{V}(t)$ as
\[\mathcal{V}(t)u := \frac{1}{\sqrt{2\pi}} \sum_{n \in \Z} e^{i(nx + tn^4 +  \int_0^t |\wh{u}(s,n)|^2 \; ds)}\wh{u}(n).\]
However, it is quite difficult to treat the nonlinear oscillation factor $e^{i \int_0^t |\wh{u}(s,n)|^2 \; ds}$. Hence by choosing the first approximation of $\mathcal{V}(t)$
\[\mathcal{W}(t)u := \frac{1}{\sqrt{2\pi}} \sum_{n \in \Z} e^{i(nx + tn^4 +  t |\wh{u}_0(n)|^2)}\wh{u}(n),\]
one can reduce \eqref{eq:WNLS2} to
\begin{equation}\label{eq:WNLS3}
\pt \wh{u}(n) -i\mu(n) \wh{u}(n) = i  (|\wh{u}(n)|^2 - |\wh{u}_0(n)|^2)\wh{u}(n) -i \sum_{\N_n}\wh{u}(n_1)\owh{u}(n_2)\wh{u}(n_3),
\end{equation}
where 
\begin{equation}\label{eq:modified linear operator}
\mu(n) = n^4 + |\wh{u}_0(n)|^2.
\end{equation}
The key observation as in \cite{TT2004} is that the term $(|\wh{u}(n)|^2 - |\wh{u}_0(n)|^2)\wh u(n)$ has smoothing effects (see Corollary \ref{cor:main}). Indeed, from the equation \eqref{eq:WNLS3}, this term equals to 
\[2i \mbox{Im} \left[ \int_0^t \sum_{\N_n} \wh{u}(s,n_1)\owh{u}(s,n_2)\wh{u}(s,n_3)\owh{u}(s,n) \; ds\right]\wh{u}(n).\]
The smoothing effect is due to the highly non-resonant structure. 

\begin{remark}\label{rem:negligible factor}
In view of \eqref{eq:resonant function}, we know that $H$ is roughly bounded below by $\max(n_1^2,n_2^2,n_3^2,n^2)$ over the non-resonant set $\N_n$. Moreover, if $u_0\in H^s$ for $-1<s<0$ then $|\wh{u}_0(n)|^2$ is bounded above by $n^2$. Hence the dominant factor of $G$ (the resonance function for \eqref{eq:WNLS3}, see \eqref{eq:resonant relation2} below) is $H$ and we have all the same estimates if we ignore $|\wh{ u}_0(n)|^2$.
\end{remark}

We now define the standard $X^{s,b}$ space, 
\[\begin{aligned}
&X^{s,b} = \set{v \in \Sch'(\R \times \R) : v(t,x) = v(t, x + 2\pi), \norm{v}_{X^{s,b}} < \infty},\\
&\quad \mbox{with}\quad\norm{v}_{X^{s,b}}^2 = \sum_{n \in \Z} \int_{\R} \bra{n}^{2s} \bra{\tau-n^4}^{2b}|\wt{v}(\tau,n)|^2 \; d\tau,
\end{aligned}\]
where $\bra{\cdot} = (1+|\cdot|^2)^{\frac12}$. The $X^{s,b}$ space was first introduced in its current form by Bourgain \cite{Bourgain1993} and further developed by Kenig, Ponce and Vega \cite{KPV1996} and Tao \cite{Tao2001}. 

In view of \eqref{eq:WNLS3}, we modify the $X^{s,b}$ space corresponding to the linear operator $\mu(-i\px)$ and we denote by $Y^{s,b}$: For $s,b \in \R$, 
\begin{equation}\label{eq:function space}
\begin{aligned}
&Y^{s,b} = \set{v \in \Sch'(\R \times \R) : v(t,x) = v(t, x + 2\pi), \norm{v}_{Y^{s,b}} < \infty},\\
&\norm{v}_{Y^{s,b}}^2 = \sum_{n \in \Z} \int_{\R} \bra{n}^{2s} \bra{\tau-\mu(n)}^{2b}|\wt{v}(\tau,n)|^2 \; d\tau,
\end{aligned}
\end{equation}
where $\mu(n)$ is defined as in \eqref{eq:modified linear operator}. Note that the function space $Y^{s,b}$ is dependent on a given initial data. For $T > 0$, we define the standard time restriction function space of \eqref{eq:function space}
\[Y_T^{s,b} = \set{v \in \mathcal{D}'((-T,T) \times \T) : \norm{v}_{Y_T^{s,b}} < \infty},\]
equipped with the norm
\[\norm{v}_{Y_T^{s,b}} = \inf \set{\norm{w}_{Y^{s,b}} : w \in Y^{s,b}, \; w = v \; \mbox{on} \; (-T, T)}.\]

Moreover, in order to investigate the non-squeezing property of \eqref{eq:NLS1}, we define the standard solution space $Z^{s,\frac12}$, $s \ge 0$, for the periodic problem as follows:
\[\begin{aligned}
&Z^{s,\frac12} = \set{v \in \Sch'(\R \times \R) : v(t,x) = v(t, x + 2\pi), \norm{v}_{Z^{s,\frac12}} < \infty},\\
&\quad \mbox{with}\quad\norm{v}_{Z^{s,\frac12}} := \norm{v}_{X^{s,\frac12}} + \norm{\bra{n}^{s}\wt{v}}_{\ell_n^2L_{\tau}^1}.
\end{aligned}\]
We also need the norm for the nonlinear term, which corresponds to $Z^{s,\frac12}$-norm
\[\norm{v}_{Z^{s,-\frac12}} := \norm{v}_{X^{s,-\frac12}} + \norm{\frac{\bra{n}^{s}}{\bra{\tau - n^4}}\wt{v}}_{\ell_n^2L_{\tau}^1}.\]
The following estimates are well-known (see \cite{Bourgain1993, KPV1996, CKSTT2003, CKSTT2004, CKSTT2005} and references therein) facts:
\begin{itemize}
\item (Embedding property) $\displaystyle \norm{u}_{C_tH^s} \lesssim \norm{u}_{Z^{s,\frac12}}$.
\item (Linear estimate) $\displaystyle \norm{u}_{Z^{s,\frac12}} \lesssim \norm{u_0}_{H^s} + \norm{|u|^2u}_{Z^{s,-\frac12}}$.
\end{itemize}  
The following lemma is the $L_{t,x}^4$ Strichartz estimate in the periodic setting. It was first introduced and proved by Bourgain \cite{Bourgain1993} in order to show the local and global well-posedness of periodic NLS and (generalized) KdV equations. Moreover, the argument and proof are further improved by Tao \cite{Tao2001, Tao2006}.
\begin{lemma}[$L^4$ Strichartz estimate]\label{lem:L4}
Let $s > -1/2$ and assume that $u_0 \in H^{s}(\T)$. Then, for $b > 5/16$, we have
\begin{equation}\label{eq:L4}
\norm{f}_{L^4(\R\times\T)} \lesssim \norm{f}_{Y^{0,b}},
\end{equation}
where $Y^{0,b}$ is defined as in \eqref{eq:function space} and the implicit constant depends only on $\|u_0\|_{H^s}$, $s$ and $b$.
\end{lemma}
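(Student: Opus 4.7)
}

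The estimate is a variant of Bourgain's classical $L^4$ Strichartz inequality for the periodic fourth-order Schr\"odinger dispersion, the only modification being that the modulation weight is measured against the perturbed symbol $\mu(n) = n^4 + |\widehat u_0(n)|^2$ in place of $n^4$. My plan is first to recall the $X^{0,b}$ version
\[
\|f\|_{L^4(\R\times\T)} \lesssim \|f\|_{X^{0,b}}, \qquad b>\tfrac{5}{16},
\]
which is the standard result from Bourgain \cite{Bourgain1993} refined by Tao \cite{Tao2001,Tao2006}, and then to transfer the argument to the $Y^{0,b}$-setting, exploiting Remark~\ref{rem:negligible factor}.

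For the $X^{0,b}$ estimate I would decompose $f$ dyadically in modulation, writing $f = \sum_{M\geq 1 \text{ dyadic}} f_M$ with $\widetilde{f_M}$ supported in $\{\langle \tau - n^4\rangle \sim M\}$, and reduce to the piecewise bound $\|f_M\|_{L^4} \lesssim M^{5/16+\epsilon}\|f_M\|_{L^2}$. Using $\|f_M\|_{L^4}^2 = \|f_M \overline{f_M}\|_{L^2}$, Plancherel in both $(\tau,x)$, and Cauchy--Schwarz in the frequency summation, this reduces to the lattice-point count
\[
\sup_{m\in\Z,\;\tau\in\R}\; \#\bigl\{n\in\Z : |n^4 - (n-m)^4 - \tau|\lesssim M\bigr\},
\]
which is handled using the factorization $n^4 - (n-m)^4 = m(2n-m)\bigl(n^2+(n-m)^2\bigr)$ whose derivative $4m(3n^2 - 3mn + m^2)$ is nonvanishing for $m\neq 0$, so the cubic is strictly monotone in $n$. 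The resulting level-set bound is polynomial in $M$ and, when summed against the $X^{0,b}$-weight via Cauchy--Schwarz, yields the threshold $b>5/16$.

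To transfer to $Y^{0,b}$, I would verify that replacing $n^4$ by $\mu(n)$ does not alter the conclusion. By Remark~\ref{rem:negligible factor}, for $u_0\in H^s$ with $s > -1/2$ one has the pointwise bound $|\widehat u_0(n)|^2 \leq \|u_0\|_{H^s}^2\langle n\rangle^{-2s}$ with $-2s < 1$; in particular the perturbation $|\widehat u_0(n)|^2 - |\widehat u_0(n-m)|^2$ appearing in the resonance function $\mu(n)-\mu(n-m)$ is strictly of lower order than the polynomial part $n^4 - (n-m)^4 \sim |m|n^3$ at large $n$. Hence it only shifts the level sets without altering their cardinality, except at finitely many small-frequency exceptions whose contribution to the counting bound is absorbed into a constant depending on $\|u_0\|_{H^s}$, $s$. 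This yields the counting estimate with the same exponent, and hence the $Y^{0,b}$-Strichartz estimate for $b>5/16$ with implicit constant of the claimed form. The main obstacle is precisely this last step: justifying that the non-polynomial, non-monotone perturbation $|\widehat u_0(n)|^2$ remains subdominant to the main polynomial resonance in the lattice-point count, which is exactly what the sharp threshold $s>-1/2$ guarantees.
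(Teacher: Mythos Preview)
Your plan is sound in outline and will lead to a correct proof, but it differs from the paper's argument in technical organization, and your sketch is vague precisely at the step where the sharp threshold $b>5/16$ must emerge.

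The paper does \emph{not} use a dyadic modulation decomposition. Following \cite{TT2004,KPV1996}, it first splits $f=f_{\mathrm{low}}+f_{\mathrm{high}}$ at a spatial-frequency threshold $N=N(\|u_0\|_{H^s},s)$ chosen so that $|n|^{-1}|\widehat u_0(n)|^2<\eta$ for $|n|\ge N$; this is exactly where $s>-1/2$ enters, and it makes precise what you call ``finitely many small-frequency exceptions''. The low-frequency pieces are handled trivially. For the high--high contribution to $\||f|^2\|_{L^2}^2$, a single Cauchy--Schwarz (in both $n_1$ and $\tau_1$) reduces matters directly to the finiteness of the weighted sum
\[
M=\sup_{\tau,\,n\neq 0}\ \sum_{n_1}\bigl\langle \tau-\mu(n-n_1)+\mu(n_1)\bigr\rangle^{1-4b},
\]
with no intermediate level-set count. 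The polynomial part of the resonance is $4nF(n_1)$ for the strictly monotone cubic $F$; after checking $|A|:=\big||\widehat u_0(n_1)|^2-|\widehat u_0(n-n_1)|^2\big|<n\eta\ll |nF(n_1)|$, the sum is bounded by factoring $F(n_1)=(n_1-\gamma)(n_1^2+c_1n_1+c_2)$ and applying H\"older with exponents $(4,4,2)$ against the three factors, which converges iff $4(1-4b)<-1$, i.e.\ $b>5/16$.

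Your route via dyadic modulation localization and lattice-point counting is a legitimate alternative in the Bourgain style, but be aware that a \emph{naive} count $\#\{n:|P(n)-\sigma|\lesssim M\}\lesssim M/|m|^3+1$ (worst case $|m|=1$) yields only $\|f_M\|_{L^4}\lesssim M^{1/2}\|f_M\|_{L^2}$ after the extra factor of $M$ from the $\tau$-width, hence merely $b>1/2$. To recover $5/16$ from the counting side you must in effect reproduce the same cubic factorization and H\"older splitting that the paper performs on the weighted sum; the two arguments converge at that point. So your proposal is not wrong, but the claimed reduction ``$\|f_M\|_{L^4}\lesssim M^{5/16+\epsilon}\|f_M\|_{L^2}$ via level-set count'' hides nontrivial work that the paper's weighted-sum approach makes explicit.
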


\begin{proof}
We follow the argument in \cite{TT2004,KPV1996}. For a given $\eta > 0$, since $u_0 \in H^s(\T)$, we can find a positive integer $N=N(\|u_0\|_{H^s},s)$ such that
\begin{equation}\label{eq:initial restriction}
|n|^{-1}|\wh{u}_0(n)|^2 \le |n|^{-1-2s}|n|^{2s}|\wh{u}_0(n)|^2<\eta \hspace{1em} \mbox{when} \hspace{1em} |n| \ge N.  
\end{equation}
From the fact that $\norm{f}_{L^4}^4 = \norm{|f|^2}_{L^2}^2$, it is enough to bound $L^2$ norm of $|f|^2$. 
We split $f$ into two parts as follows:
\[f = f_{high} + f_{low}, \hspace{1em} \wh{f}_{high}(n) = 0 \hspace{0.7em} \mbox{for} \hspace{0.5em} |n| < N \hspace{1em} \mbox{and} \hspace{1em} \wh{f}_{low}(n) = 0 \hspace{0.7em} \mbox{for} \hspace{0.5em} |n| \ge N.\]
Since $|f|^2 = |f_{high}|^2+f_{high}\overline{f_{low}}+\overline{f_{high}}f_{low}+|f_{low}|^2$ and $\norm{f\overline{g}}_{L^2}=\norm{\overline{f}g}_{L^2}$, it suffices to estimate the $L^2$ norms of $|f_{high}|^2$, $f_{high}\overline{f_{low}}$ and $|f_{low}|^2$ terms.
We only estimate $L^2$ norm of $|f_{high}|^2$, since the other terms can be easily controlled.\footnote{In view of \eqref{eq:M} below, we can easily show $M < \infty$ when one of or both $|n_1|$ and $|n-n_1|$ are bounded.} In this case, we may assume that 
\[\wt{f}(\tau,n) = 0, \hspace{1em} \mbox{for} \hspace{0.5em} \tau \in \R, \; |n| < N.\]
By Plancherel's theorem, we get
\[\begin{aligned}
\norm{|f|^2}_{L^2}^2 &\lesssim \sum_{n \in \Z} \int_{\R} \left|\sum_{n_1 \in \Z} \int_{\R} \wt{f}(\tau-\tau_1, n-n_1)\wt{\ol{f}}(\tau_1, n_1) \; d\tau_1 \right|^2 \; d\tau\\
&=\int_{\R} \left|\sum_{n_1 \in \Z} \int_{\R} \wt{f}(\tau-\tau_1, -n_1)\wt{\ol{f}}(\tau_1, n_1) \; d\tau_1 \right|^2 \; d\tau\\
&+\sum_{n \neq 0} \int_{\R} \left|\sum_{n_1 \in \Z} \int_{\R} \wt{f}(\tau-\tau_1, n-n_1)\wt{\ol{f}}(\tau_1, n_1) \; d\tau_1 \right|^2 \; d\tau\\
&=I + II.
\end{aligned}\]

For the term $I$, since 
\begin{align*}
&\Big|\int_{\R}\wt{f}(\tau-\tau_1, -n_1)\wt{\ol{f}}(\tau_1, n_1) \; d\tau_1\Big| \\
\lesssim& \Big( \int_{\R}\bra{\tau-\tau_1-\mu(-n_1)}^{-2b}\bra{\tau_1+\mu(n_1)}^{-2b} \; d\tau_1\Big)^{\frac12}\\
&\hspace{1em}\times \Big(\int_{\R}\bra{\tau-\tau_1-\mu(-n_1)}^{2b}|\wt{f}(\tau-\tau_1, -n_1)|^2\bra{\tau_1+\mu(n_1)}^{2b}\wt{\ol{f}}(\tau_1, n_1)|^2 \; d\tau_1 \Big)^{\frac12}
\end{align*}
and 
\[\int_{\R}\bra{\tau-\tau_1-\mu(-n_1)}^{-2b}\bra{\tau_1+\mu(n_1)}^{-2b} \; d\tau_1 < \infty\]
whenever $b > 1/4$, we get by the Minkowski inequality and the Cauchy-Schwarz inequality that
\[\begin{aligned}
I &\lesssim \int_{\R} \left|\sum_{n_1 \in \Z} \Big(\int_{\R}\bra{\tau-\tau_1-\mu(-n_1)}^{2b}|\wt{f}(\tau-\tau_1, -n_1)|^2\bra{\tau_1+\mu(n_1)}^{2b}\wt{\ol{f}}(\tau_1, n_1)|^2 \; d\tau_1 \Big)^{\frac12} \right|^2 \; d\tau\\
&\lesssim \left|\sum_{n_1 \in \Z} \Big(\int_{\R^2}\bra{\tau-\tau_1-\mu(-n_1)}^{2b}|\wt{f}(\tau-\tau_1, -n_1)|^2\bra{\tau_1+\mu(n_1)}^{2b}\wt{\ol{f}}(\tau_1, n_1)|^2 \; d\tau d\tau_1 \Big)^{\frac12} \right|^2\\
&\lesssim \norm{f}_{Y^{0,b}}^4.
\end{aligned}\]

For the term $II$, from the fact that
\[\norm{fg}_{L^2} = \norm{\ol{fg}}_{L^2} = \norm{f\ol{g}}_{L^2} = \norm{\ol{f}g}_{L^2},\]
we may assume further that $n_1\geq N, n-n_1\geq N$ in the summation of the integrand. 
As the term $I$, we have
\[\begin{aligned}
II &\lesssim \sum_{n \neq 0}\int_{\R} \Bigg[\Big( \sum_{n_1\in\Z}\int_{\R}\bra{\tau-\tau_1-\mu(n-n_1)}^{-2b}\bra{\tau_1+\mu(n_1)}^{-2b} \; d\tau_1\Big)^{\frac12}\\
&\hspace{1em}\times \Big(\sum_{n_1\in\Z}\int_{\R}\bra{\tau-\tau_1-\mu(n-n_1)}^{2b}|\wt{f}(\tau-\tau_1, n-n_1)|^2\bra{\tau_1+\mu(n_1)}^{2b}\wt{\ol{f}}(\tau_1, n_1)|^2 \; d\tau_1 \Big)^{\frac12}\Bigg]^2 \; d\tau\\
&\lesssim M\norm{f}_{Y^{0,b}}^4,
\end{aligned}\]
where
\begin{equation}\label{eq:M-1} 
M = \sup_{(\tau,n) \in \R \times \Z \setminus \set{0}} \Big( \sum_{\substack{n_1 \ge N \\ n-n_1 \ge N}}\int_{\R}\bra{\tau-\tau_1-\mu(n-n_1)}^{-2b}\bra{\tau_1+\mu(n_1)}^{-2b} \; d\tau_1\Big)^{\frac12}.
\end{equation}
Hence, it suffices to show $M < \infty$ whenever $b > 5/16$.

By a simple calculation
\[\int_{\R} \bra{a}^{-\alpha}\bra{b-a}^{-\alpha} \; da \lesssim \bra{b}^{1-2\alpha}\]
for $1/2 < \alpha < 1$, we have 
\begin{equation}\label{eq:M-2}
\eqref{eq:M-1} \lesssim \sup_{(\tau,n) \in \R \times \Z \setminus \set{0}} \Big( \sum_{\substack{n_1 \ge N \\ n-n_1 \ge N}}\bra{\tau-\mu(n-n_1)+\mu(n_1)}^{1-4b}\Big)^{\frac12}.
\end{equation}
Fix $(\tau, n) \in \R \times \Z \setminus \set{0}$. We first investigate the terms inside the brackets on the right-hand side of \eqref{eq:M-2}. A simple calculation yields
\[\begin{aligned}
\tau-\mu(n-n_1)+\mu(n_1) &= \tau-[(n-n_1)^4 + |\wh{u}_0(n-n_1)|^2] + n_1^4 + |\wh{u}_0(n_1)|^2\\
&= 4n\left(n_1^3 - \frac32nn_1^2 + n^2n_1 +\frac{1}{4n}(\tau - n^4)\right)+A,
\end{aligned}\]
where 
\[A = -|\wh{u}_0(n-n_1)|^2 + |\wh{u}_0(n_1)|^2.\]
Let $F(n_1) := n_1^3 - \frac32nn_1^2 + n^2n_1 + \frac{1}{4n}(\tau - n^4)$. Since
\[\partial_{n_1}F(n_1) = 3n_1^2 - 3n_1n + n^2 = (\sqrt{3}n_1 - \frac{\sqrt{3}}{2}n)^2 + \frac14n^2 > \frac{1}{4}n^2,\]
then we know there is only one real root of $F(n_1)=0$ denoted by $\gamma$ and 
\[|\{n_1: |F(n_1)|\lesssim 1\}|\leq 10.\]
We may assume $|F(n_1)|\gg 1$ and factorize $F(n_1)$ as
\[F(n_1)=(n_1-\gamma)(n_1^2 + c_1n_1 + c_2), \hspace{1em} n_1^2 + c_1n_1 + c_2 \neq 0 \mbox{ for all } n_1 \in \Z.\]
From \eqref{eq:initial restriction} under the assumption $n_1, n-n_1 > N$, we get
\[|A| \le |\wh{u}(n_1)|^2 + |\wh{u}(n-n_1)|^2 < n_1\eta + (n-n_1)\eta = n\eta\]
which implies by choosing $\eta > 0$ small enough that
\[|nF(n_1)|\gg |A|\]
and thus get
\begin{equation}\label{eq:M}
\begin{aligned}
\mbox{RHS of } \eqref{eq:M-2} &\lesssim \sup_{(\tau,n) \in \R \times \Z \setminus \set{0}} \Big( \sum_{n \ge n_1 \ge N}(|n_1|\bra{n_1-\gamma}\bra{n_1^2 + c_1n_1 + c_2})^{1-4b}\Big)^{\frac12}\\
&\lesssim \sup_{(\tau,n) \in \R \times \Z \setminus \set{0}} \Bigg[ \Big(\sum_{n \ge n_1 \ge N}\bra{n_1}^{4(1-4b)}\Big)^{\frac14}\\
&\hspace{2em}\times\Big(\sum_{n \ge n_1 \ge N}\bra{n_1-\gamma}^{4(1-4b)}\Big)^{\frac14} \\
&\hspace{2em}\times\Big(\sum_{n \ge n_1 \ge N}\bra{n_1^2 + c_1n_1 + c_2}^{2(1-4b)}\Big)^{\frac12} \Bigg]\\
&< \infty,
\end{aligned}
\end{equation}
since $4(1-4b) < -1$. Therefore, it completes the proof of Lemma \ref{lem:L4}.
\end{proof}

\begin{lemma}[Sobolev embedding]\label{lem:sobolev}
Let $2 \le p < \infty$ and $f$ be a smooth function on $\R \times \T$. Then for $b \ge \frac12 - \frac1p$, we have
\begin{equation}\label{eq:sobolev}
\norm{f}_{L_t^p(H_x^s)} \lesssim \norm{f}_{Y^{s,b}}.
\end{equation}
Similarly we have
\begin{equation}\label{eq:sobolev-1}
\norm{f}_{L_t^p(H_x^s)} \lesssim \norm{f}_{X^{s,b}}.
\end{equation}
\end{lemma}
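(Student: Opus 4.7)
The plan is to reduce the claim to the one–dimensional Sobolev embedding $H_t^b \hookrightarrow L_t^p$ (valid for $b \ge \tfrac12 - \tfrac1p$ when $2 \le p < \infty$), applied to each spatial Fourier mode individually. Since $p \ge 2$, I first use Minkowski's inequality to move the $L_t^p$ norm inside the $\ell_n^2$ sum defining $H_x^s$:
\[
\norm{f}_{L_t^p H_x^s}
= \Bigl\|\Bigl(\sum_{n\in\Z} \bra{n}^{2s} |\wh f(t,n)|^2\Bigr)^{1/2}\Bigr\|_{L_t^p}
\le \Bigl(\sum_{n\in\Z} \bra{n}^{2s} \norm{\wh f(\cdot,n)}_{L_t^p}^2\Bigr)^{1/2},
\]
where the exchange is legitimate because $p/2 \ge 1$.

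Next, I estimate the inner $L_t^p$ norm mode by mode using the standard 1D Sobolev embedding, but after a frequency shift designed to produce the correct Bourgain weight. Multiplication by $e^{-it\mu(n)}$ is an isometry on $L_t^p$, so
\[
\norm{\wh f(\cdot,n)}_{L_t^p}
= \norm{e^{-it\mu(n)}\wh f(\cdot,n)}_{L_t^p}
\lesssim \norm{e^{-it\mu(n)}\wh f(\cdot,n)}_{H_t^b}
\]
whenever $b \ge \tfrac12 - \tfrac1p$. The Fourier transform in $t$ of $e^{-it\mu(n)}\wh f(t,n)$ is $\wt f(\tau+\mu(n), n)$, so after the change of variables $\tau \mapsto \tau-\mu(n)$ the right-hand side equals
\[
\Bigl(\int_{\R} \bra{\tau-\mu(n)}^{2b}|\wt f(\tau,n)|^2\,d\tau\Bigr)^{1/2}.
\]

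Plugging this back into the previous display and summing in $n$ gives
\[
\norm{f}_{L_t^p H_x^s}^2
\lesssim \sum_{n\in\Z} \bra{n}^{2s} \int_{\R} \bra{\tau-\mu(n)}^{2b}|\wt f(\tau,n)|^2\,d\tau
= \norm{f}_{Y^{s,b}}^2,
\]
which is \eqref{eq:sobolev}. The estimate \eqref{eq:sobolev-1} for the standard $X^{s,b}$ space is identical, with $\mu(n)$ replaced by $n^4$ in the frequency shift step. There is no real obstacle here: the only subtlety is using the translation invariance of $L_t^p$ to convert the flat Sobolev weight $\bra{\tau}^{2b}$ into the modulation weight $\bra{\tau-\mu(n)}^{2b}$ matching the definition of $Y^{s,b}$.
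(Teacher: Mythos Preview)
Your proof is correct and is essentially the same argument as the paper's: both reduce the estimate to the one-dimensional temporal Sobolev embedding $H_t^b \hookrightarrow L_t^p$ after twisting by the linear group $e^{-it\mu(n)}$. The only difference is packaging---the paper applies the embedding to the vector-valued function $S(-t)f$ in one stroke, whereas you unpack this via Minkowski and apply the scalar embedding mode by mode, which makes the frequency shift producing the $\bra{\tau-\mu(n)}^{2b}$ weight more explicit.
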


\begin{proof}
We only prove \eqref{eq:sobolev} and the proof follows directly from the Sobolev embedding in terms of $t$. Indeed, for $S(t)f(t,x) = \ft_x^{-1}[e^{it\mu(n)}\wh{f}(t,n)]$, since $\norm{S(-t)f}_{H_x^s} = \norm{f}_{H_x^s}$, we have
\[\norm{f}_{L_t^p(H_x^s)} = \norm{\norm{S(-t)f}_{H_x^s}}_{L_t^p} \lesssim \norm{\norm{S(-t)f}_{H_x^s}}_{H_t^b}= \norm{f}_{Y^{s,b}}.\]
\end{proof}

The last two lemmas in this section are the main ingredients to show the non-squeezing property of \eqref{eq:NLS1}. Particularly, the factor $N_{max}^{-\frac12+}$ in the first lemma below facilitates that the truncated flow map approximates to the original flow map in $L^2(\T)$.
\begin{lemma}[Trilinear estimate in $Z^{0,\frac12}$]\label{lem:trilinear L2}
Let $N_j$, $j=1,2,3,4$ be dyadic numbers. Let $u_j = P_{N_j}u$ and $|n_j| \sim N_j$, $j=1,2,3$. Then, we have
\begin{equation}\label{eq:bilinear-1}
\norm{P_{N_4}\N_{NR}(u_1,u_2,u_3)}_{X^{0,-\frac12}} \lesssim N_{max}^{-\frac12+}\norm{u_1}_{X^{0,\frac12}}\norm{u_2}_{X^{0,\frac12}}\norm{u_3}_{X^{0,\frac12}},
\end{equation}
and
\begin{equation}\label{eq:bilinear-2}
\norm{\bra{\tau_4 -n_4^4}^{-1}\ft[P_{N_4}\N_{NR}(u_1,u_2,u_3)]}_{\ell_{n_4}^2L_{\tau_4}^1} \lesssim N_{max}^{-\frac12+}\norm{u_1}_{X^{0,\frac12}}\norm{u_2}_{X^{0,\frac12}}\norm{u_3}_{X^{0,\frac12}},
\end{equation}
where $P_N$ is defined as in \eqref{eq:projection op}.
\end{lemma}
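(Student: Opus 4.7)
The plan is to exploit the biharmonic resonance in \eqref{eq:resonant function} to produce a large modulation on one of the four frequencies, then close the estimates via the $L^4$-Strichartz bound of Lemma~\ref{lem:L4} after a case analysis on the dominant modulation.

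\textbf{Resonance and case split.} On the non-resonant set $\N_{n_4}$, the factorization \eqref{eq:resonant function} combined with $|n_1-n_2|\cdot|n_2-n_3|\ge 1$ gives $|H|\gtrsim N_{max}^2$, since the quadratic factor alone is of size $N_{max}^2$. The convolution constraint $\tau_1-\tau_2+\tau_3-\tau_4=0$ then forces
\[
(\tau_1-n_1^4)-(\tau_2-n_2^4)+(\tau_3-n_3^4)-(\tau_4-n_4^4)=-H,
\]
hence $\max_{j\in\{1,2,3,4\}}\bra{\tau_j-n_j^4}\gtrsim N_{max}^2$. I would split the estimates into four cases according to which index attains this maximum.

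\textbf{Proof of \eqref{eq:bilinear-1}.} In the output-dominant case $j=4$, the weight $\bra{\tau_4-n_4^4}^{-1/2}\lesssim N_{max}^{-1}$ present in the $X^{0,-1/2}$ norm reduces matters to bounding $\norm{u_1\bar u_2 u_3}_{L^2_{t,x}}$, which I would estimate by H\"older $L^{\infty}_{t,x}\cdot L^4_{t,x}\cdot L^4_{t,x}$, applying Lemma~\ref{lem:L4} to two factors and the 1D spatial Sobolev $H^{1/2+}_x\hookrightarrow L^\infty_x$ to the third at a cost of $N_{max}^{1/2+}$; this produces the net $N_{max}^{-1/2+}$. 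In the input-dominant cases $j\in\{1,2,3\}$, I dualize against a test function $v\in X^{0,1/2}$ with $P_{N_4}v=v$. On the Fourier region $\{\bra{\tau_j-n_j^4}\gtrsim N_{max}^2\}$, the restricted factor $u_j$ satisfies $\norm{u_j|_{B_j}}_{L^2_{t,x}}\lesssim N_{max}^{-1}\norm{u_j}_{X^{0,1/2}}$ (by pulling out $\bra{\tau_j-n_j^4}^{-1/2}$ from the $X^{0,1/2}$ weight), after which H\"older $L^2\cdot L^{\infty}\cdot L^4\cdot L^4$ combined with Lemma~\ref{lem:L4} and the spatial Sobolev embedding (on one of the remaining factors) closes the estimate with the same net gain.

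\textbf{Proof of \eqref{eq:bilinear-2} and main obstacle.} For \eqref{eq:bilinear-2}, Cauchy-Schwarz in $\tau_4$ against the square-integrable weight $\bra{\tau_4-n_4^4}^{-1/2-\epsilon}$ yields
\[
\norm{\bra{\tau_4-n_4^4}^{-1}\ft[P_{N_4}\N_{NR}(u_1,u_2,u_3)]}_{\ell^2_{n_4}L^1_{\tau_4}}\lesssim\norm{P_{N_4}\N_{NR}(u_1,u_2,u_3)}_{X^{0,-1/2+\epsilon/2}},
\]
and the argument above, with $-1/2$ replaced by $-1/2+\epsilon/2$, gives the bound with an $\epsilon$-modification absorbed into the $N_{max}^{-1/2+}$. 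The chief technical subtlety is that $X^{0,1/2}$ does not embed into $L^\infty_{t,x}$ at this endpoint in $b$; I would circumvent this by substituting $L^q_t$ for $L^\infty_t$ with $q$ large (permitted by Lemma~\ref{lem:sobolev} at $b=1/2-1/q$) and combining with the 1D spatial embedding $H^{1/2+}_x\hookrightarrow L^\infty_x$, the loss being $N^{1/2+}$ in the frequency of the $L^\infty$-factor. Since each $N_j\lesssim N_{max}$ and at least two of the $N_j$ are $\sim N_{max}$ (by $n_1-n_2+n_3=n_4$), the modulation gain $N_{max}^{-1}$ and the Sobolev loss $N_{max}^{1/2+}$ can always be charged to different factors, yielding the desired net $N_{max}^{-1/2+}$ in every case.
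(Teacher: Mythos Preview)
Your strategy differs substantially from the paper's, and the patch you propose for the $L^\infty_t$ failure does not close. Concretely, in the output-dominant case you reduce to $\|u_1\bar u_2u_3\|_{L^2_{t,x}}$ and want $L^q_tL^\infty_x\cdot L^4_{t,x}\cdot L^4_{t,x}$. But H\"older in time then requires $1/q+1/4+1/4=1/2$, i.e.\ $q=\infty$; for any finite $q$ the two $L^4_t$ factors must be upgraded to $L^{p}_tL^4_x$ with $p=4q/(q-2)>4$, and Lemma~\ref{lem:L4} only delivers $L^4_tL^4_x$. The same mismatch reappears in the input-dominant cases after duality. One can rescue this by interpolating Strichartz with a Bernstein-in-space estimate (paying $N_j^{0+}$ on the upgraded factors), but you neither state this nor track the extra losses, so as written the argument does not yield $N_{max}^{-1/2+}$. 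Note also that putting all four factors in $L^4$ gives only $N_{max}^{-3/8+}$ (from the slack $1/2-5/16$ in $b$), so the $L^\infty_x$ factor is genuinely needed for your target gain.

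The paper avoids this endpoint issue entirely by a different mechanism. It uses the \emph{sharper} lower bound $|H|\gtrsim |n_1-n_2|\,(n^\ast)^2$ rather than just $(n^\ast)^2$, changes variables to $n'=n_1-n_2$, and combines the resulting $|n'|^{-1/2}$ with $N_{max}^{-1/2-\epsilon}$ (using $|n'|\lesssim N_{max}$) to produce the summable weight $|n'|^{-1-\epsilon}$. After summing in $n'$, each spatial factor is estimated in $L^2_x$ via $\|\widehat{fg}\|_{\ell^\infty}\le\|f\|_{L^2}\|g\|_{L^2}$, and the time integrals close by H\"older $L^6_t\cdot L^6_t\cdot L^6_t\cdot L^2_t$ together with the Sobolev embedding $X^{0,1/3}\hookrightarrow L^6_tL^2_x$ of Lemma~\ref{lem:sobolev}. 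No $L^4$-Strichartz and no $L^\infty$ in either variable are used, so the $b=1/2$ endpoint causes no trouble. For \eqref{eq:bilinear-2} the paper does a genuine two-case split ($|\lambda_1|\gtrless|\lambda_4|$) with slightly different Cauchy--Schwarz weights in $\tau_4$, rather than reducing wholesale to an $X^{0,-1/2+\epsilon}$ bound.
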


\begin{proof}
We first estimate \eqref{eq:bilinear-1}. Let $\la_j = \tau_j - n_j^4$, $j=1,2,3,4$. From the definition of $X^{s,b}$ norm and the duality, it suffices to show for \eqref{eq:bilinear-1} that
\begin{equation}\label{eq:bilinear-1.1}
\begin{aligned}
\Bigg|\sum_{n_4, \N_{n_4}}\int\limits_{\tau_1 - \tau_2 + \tau_3 = \tau_4} \frac{N_{max}^{\frac12 - \epsilon}}{\prod_{j=1}^{4} \bra{\la_j}^{\frac12}}\wt{u}_1(\tau_1,n_1)\ol{\wt{u}}_2(\tau_2,n_2)\wt{u}_3(\tau_3,n_3)\ol{\wt{u}}_4(\tau_4,&n_4) \; d\tau_1d\tau_2d\tau_3 \Bigg| \\ 
&\lesssim \prod_{j=1}^{3} \norm{u_j}_{L_{t,x}^2},
\end{aligned}
\end{equation}
where $u_4 = P_{N_4}u$ with $\norm{u}_{L_{t,x}^2} \le 1$ and $0 < \epsilon < 1/2$. 

Without loss of generality we may assume that $|\la_1| \le |\la_2| \le |\la_3| \le |\la_4|$.\footnote{In view of the proof, changing the order of modulations does not affect our proof.} Then, we know from \eqref{eq:resonant function} that $|\la_4| \gtrsim |n_1-n_2|(n^{\ast})^2$, for $n^{\ast} = \max(|n_1|,|n_2|,|n_3|,|n_4|)$. Let $\wt{f}_j(\tau_j,n_j) = \bra{\tau_j - n_j^4}^{-\frac12}|\wt{u}_j(\tau_j,n_j)|$ for $j=1,2,3$, $\wt{g}_4(\tau_4,n_4) = |\wt{w}_4(\tau_4,n_4)|$ and $n' = n_1 - n_2$. Then the left-hand side of \eqref{eq:bilinear-1.1} is bounded by
\[\int_{\R}\sum_{\substack{n_2,n_3 \\ n' \neq 0}}\frac{1}{|n'|^{1+\epsilon}}\wh{f}_1(t,n'- n_2)\ol{\wh{f}}_2(t,n_2)\wh{f}_3(t,n_3)\ol{\wh{g}}_4(-t,n' + n_3) \; dt.\]
We focus on the integrand with respect to the spatial frequencies. We observe the following calculation
\begin{equation}\label{eq:convolution}
\begin{aligned}
\sum_{\substack{n_2,n_3 \\ n' \neq 0}}&\frac{1}{|n'|^{1+\epsilon}}\wh{g}_1(n'- n_2)\ol{\wh{g}}_2(n_2)\wh{g}_3(n_3)\ol{\wh{g}}_4(n' + n_3) \\
&\lesssim \sum_{n' \neq 0}\frac{1}{|n'|^{1+\epsilon}}\ft_x[g_1\ft_x^{-1}[\ol{\wh{g}}_2]](n')\ft_x[g_3\ol{g}_4](-n')\\
&\lesssim \norm{\ft_x[g_1\ft_x^{-1}[\ol{\wh{g}}_2]]}_{\ell_n^{\infty}}\norm{\ft_x[g_3\ol{g}_4]}_{\ell_n^{\infty}} \lesssim \prod_{j=1}^{4}\norm{g_j}_{L_x^2}.
\end{aligned}
\end{equation}
The second inequality holds since $\frac{1}{|n'|^{1+\epsilon}}$ is summable over $n' \neq 0$. Then, by using \eqref{eq:convolution} and the Sobolev embedding \eqref{eq:sobolev-1}, we obtain
\begin{equation}\label{eq:convolution-1}
\begin{aligned}
\mbox{LHS of } \eqref{eq:bilinear-1.1} &\lesssim \int_{\R} F_1(t)F_2(t)F_3(t)G_4(t) \; dt \lesssim \prod_{j=1}^{3}\norm{F_j}_{L_t^6}\norm{G_4}_{L_t^2}\\
&\lesssim \prod_{j=1}^{3}\norm{f_j}_{X^{0,\frac13}}\norm{w_4}_{L_{t,x}^2} \lesssim \prod_{j=1}^{3}\norm{u_j}_{X^{0,-\frac16}},
\end{aligned}
\end{equation}
where $F_j(t) = \norm{f_j(t)}_{L_x^2}$, $j=1,2,3$ and $G_4(t) = \norm{w(t)}_{L_x^2}$.

Now we prove \eqref{eq:bilinear-2}. Similarly as before, it suffices to show that
\begin{equation}\label{eq:bilinear-2.1}
\begin{aligned}
\Bigg\|\sum_{\substack{n_4, \N_{n_4}\\ |n_i|\sim N_i}}\int\limits_{\tau_1 - \tau_2 + \tau_3 = \tau_4} \frac{N_{max}^{\frac12 - \epsilon}}{\bra{\la_4}\prod_{j=1}^{3} \bra{\la_j}^{\frac12}}\wt{u}_1(\tau_1,n_1)\ol{\wt{u}}_2(\tau_2,n_2)\wt{u}_3(\tau_3,n_3)&\; d\tau_1d\tau_2\Bigg\|_{\ell_{n_4}^2L_{\tau_4}^1} \\ 
&\lesssim \prod_{j=1}^{3} \norm{u_j}_{L_{t,x}^2},
\end{aligned}
\end{equation}
where $0 < \epsilon < 1/2$. Without loss of generality we may assume (by the same reason as before) that $|\la_1| \ge |\la_2| \ge |\la_3|$, and we estimate the left-hand side of \eqref{eq:bilinear-2.1} by dividing into two cases: $|\la_1| \ge |\la_4|$ and $|\la_1| \le |\la_4|$.

For the case when $|\la_1| \ge |\la_4|$, since $\bra{\la_4}^{-\frac23}$ is $L_{\tau_4}^2$-integrable, we use the Cauchy-Schwarz inequality with respect to $\tau_4$, $|\la_1| \gtrsim |n_1-n_2|(n^{\ast})^2$ and the duality to dominate the left-hand side of \eqref{eq:bilinear-2.1} by
\begin{equation}\label{eq:bilinear-2.2}
\int_{\R}\sum_{\substack{n_1, n_2, n_3 \\ n_1 - n_2 \neq 0}}\frac{1}{|n_1 - n_2|^{1+\epsilon}}\wh{f}_1(t,n_1)\ol{\wh{f}}_2(t,n_2)\wh{f}_3(t,n_3)\ol{\wh{g}}_4(-t,n_4) \; dt,
\end{equation}
where $\wt{f}_1(\tau_1,n_1) = |\wt{u}_1(\tau_1,n_1)|$, $\wt{f}_j(\tau_j,n_j) = \bra{\tau_j - n_j^4}^{-\frac12}|\wt{u}_j(\tau_j,n_j)|$, $j=2,3$ and $\wt{g}_4(\tau_4,n_4) = \bra{\tau_4 - n_4^4}^{-\frac13}|\wt{w}_4(\tau_4,n_4)|$ with $\norm{w_4}_{L_{t,x}^2} \le 1$.

Similarly as in \eqref{eq:convolution} and \eqref{eq:convolution-1}, we can have
\[\begin{aligned}
\eqref{eq:bilinear-2.2} &\lesssim \int_{\R} F_1(t)F_2(t)F_3(t)G_4(t) \; dt \lesssim \norm{F_1}_{L_t^2}\prod_{j=2}^{3}\norm{F_j}_{L_t^6}\norm{G_4}_{L_t^6}\\
&\lesssim \norm{u_1}_{L_{t,x}^2}\prod_{j=2}^{3}\norm{f_j}_{X^{0,\frac13}}\norm{g_4}_{X^{0,\frac13}} \lesssim \norm{u_1}_{X^{0,0}}\prod_{j=2}^{3}\norm{u_j}_{X^{0,-\frac16}},
\end{aligned}\]
where $F_j(t) = \norm{f_j(t)}_{L_x^2}$, $j=1,2,3$ and $G_4(t) = \norm{g_4(t)}_{L_x^2}$.

For the case when $|\la_4| \ge |\la_1|$, since $\epsilon > 0$, we can choose $\delta, \gamma > 0$ small enough such that $\gamma > \delta$ and $2\delta + \gamma < \epsilon$. Then, since $\bra{\la_4}^{-\frac12 - \delta}$ is $L_{\tau_4}^2$-integrable, we use the Cauchy-Schwarz inequality with respect to $\tau_4$, $|\la_4| \gtrsim |n_1-n_2|(n^{\ast})^2$ and the duality to dominate the left-hand side of \eqref{eq:bilinear-2.1} by
\begin{equation}\label{eq:bilinear-2.3}
\int_{\R}\sum_{\substack{n_1, n_2, n_3 \\ n_1 - n_2 \neq 0}}\frac{1}{|n_1 - n_2|^{1+\gamma - \delta}N_{max}^{\epsilon - 2\delta - \gamma}}\wh{f}_1(t,n_1)\ol{\wh{f}}_2(t,n_2)\wh{f}_3(t,n_3)\ol{\wh{g}}_4(-t,n_4) \; dt,
\end{equation}
where $\wt{f}_j(\tau_j,n_j) = \bra{\tau_j - n_j^4}^{-\frac12}|\wt{u}_j(\tau_j,n_j)|$, $j=1,2,3$ and $\wt{g}_4(\tau_4,n_4) = |\wt{w}_4(\tau_4,n_4)|$ with $\norm{w_4}_{L_{t,x}^2} \le 1$. Since $\frac{1}{N_{max}^{\epsilon - 2\delta - \gamma}} \le 1$ and $\gamma - \delta >0$, by using similar arguments in \eqref{eq:convolution} and \eqref{eq:convolution-1}, we obtain
\[\begin{aligned}
\eqref{eq:bilinear-2.3} &\lesssim \int_{\R} F_1(t)F_2(t)F_3(t)G_4(t) \; dt \lesssim \prod_{j=1}^{3}\norm{F_j}_{L_t^6}\norm{G_4}_{L_t^2}\\
&\lesssim \prod_{j=2}^{3}\norm{f_j}_{X^{0,\frac13}}\norm{w_4}_{L_{t,x}^2} \lesssim \prod_{j=1}^{3}\norm{u_j}_{X^{0,-\frac16}},
\end{aligned}\]
where $F_j(t) = \norm{f_j(t)}_{L_x^2}$, $j=1,2,3$ and $G_4(t) = \norm{w_4(t)}_{L_x^2}$. Therefore, we complete the proof.
\end{proof}

\begin{lemma}\label{lem:resonant}
Let $s \ge 0$. Then we have
\begin{equation}\label{eq:resonant}
\norm{\N_R(u,v,w)}_{Z^{s,-\frac12}} \lesssim \norm{u}_{Z^{s,\frac12}}\norm{v}_{Z^{s,\frac12}}\norm{w}_{Z^{s,\frac12}}.
\end{equation}
\end{lemma}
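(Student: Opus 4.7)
The plan is to split the resonant nonlinearity into its two constituent pieces,
\[
\N_R(u,v,w) = \N_{R,1}(u,v,w) + \N_{R,2}(u,v,w),
\]
where $\N_{R,1}$ has spatial Fourier coefficient $i\wh{u}(n)\ol{\wh{v}}(n)\wh{w}(n)$ (the fully diagonal piece) and $\N_{R,2}(u,v,w) = -2i\,\bra{u(t,\cdot), v(t,\cdot)}_{L^2_x}\, w$ is the product of $w$ with the spatially constant time-function $\bra{u,v}_{L^2_x}$. Each piece will be estimated separately in the two norm components of $\|\cdot\|_{Z^{s,-1/2}}$: the $X^{s,-1/2}$ norm and the weighted $\ell^2_n L^1_\tau$ norm. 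Since for both pieces the Bourgain weight $\bra{\tau-n^4}$ is evaluated at a single spatial frequency $n$, no resonance cancellation is available; instead, the argument reduces, mode by mode, to Young-type convolution estimates in $\tau$ followed by an $\ell^2_n$-summation.

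For $\N_{R,1}$, taking the temporal Fourier transform produces a triple convolution in $\tau$:
\[
|\wt{\N_{R,1}}(\tau,n)| \le (|\wt u(\cdot,n)|\ast|\wt{v}^{\ast}(\cdot,n)|\ast|\wt w(\cdot,n)|)(\tau),\qquad \wt{v}^{\ast}(\tau,n):=\wt v(-\tau,n).
\]
For the $X^{s,-1/2}$ contribution I would use $\bra{\tau-n^4}^{-1}\le 1$ followed by Young's inequality $L^1\ast L^1\ast L^2 \hookrightarrow L^2$ in $\tau$ to bound $\|\wt{\N_{R,1}}(\cdot,n)\|_{L^2_\tau}$ by $\|\wt u(\cdot,n)\|_{L^1_\tau}\|\wt v(\cdot,n)\|_{L^1_\tau}\|\wt w(\cdot,n)\|_{L^2_\tau}$; for the $\ell^2_n L^1_\tau$ contribution I would first apply Cauchy-Schwarz in $\tau$ against $\bra{\tau-n^4}^{-1}\in L^2_\tau$ and then the same Young step. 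When summing in $n$ with the weight $\bra{n}^{2s}$, the weight is placed entirely on the $L^2_\tau$-factor (the $w$-factor), and the remaining two factors are absorbed via
\[
\|\wt u\|_{\ell^\infty_n L^1_\tau}\le \|\wt u\|_{\ell^2_n L^1_\tau}\le \|u\|_{Z^{0,1/2}}\le \|u\|_{Z^{s,1/2}},
\]
which crucially uses $s\ge 0$, together with the elementary inclusion $\|w\|_{X^{s,0}}\le \|w\|_{X^{s,1/2}}\le\|w\|_{Z^{s,1/2}}$.

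For $\N_{R,2}$, Plancherel in $x$ gives $F(t):=\bra{u(t),v(t)}_{L^2_x}=\sum_m \wh u(t,m)\ol{\wh v(t,m)}$, whose temporal Fourier transform
\[
\wh F(\tau)=\sum_m\int \wt u(\tau',m)\ol{\wt v(\tau'-\tau,m)}\,d\tau'
\]
satisfies, by Young in $\tau$ and Cauchy-Schwarz in $m$,
\[
\|\wh F\|_{L^1_\tau}\le \sum_m\|\wt u(\cdot,m)\|_{L^1_\tau}\|\wt v(\cdot,m)\|_{L^1_\tau}
\le \|\wt u\|_{\ell^2_m L^1_\tau}\|\wt v\|_{\ell^2_m L^1_\tau}\lesssim \|u\|_{Z^{s,1/2}}\|v\|_{Z^{s,1/2}},
\]
and consequently $\|F\|_{L^\infty_t}\le \|\wh F\|_{L^1_\tau}$. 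The $X^{s,-1/2}$ estimate of $\N_{R,2}=-2iFw$ then follows from $\|Fw\|_{X^{s,-1/2}}\le\|Fw\|_{X^{s,0}}=\|Fw\|_{L^2_t H^s_x}\le\|F\|_{L^\infty_t}\|w\|_{X^{s,0}}$. For the $\ell^2_n L^1_\tau$ part, I would use the representation $\wt{\N_{R,2}}(\tau,n)=-2i(\wh F\ast \wt w(\cdot,n))(\tau)$, Cauchy-Schwarz in $\tau$ against $\bra{\tau-n^4}^{-1}\in L^2_\tau$, and Young $L^1\ast L^2\hookrightarrow L^2$, yielding the per-$n$ bound $\|\wh F\|_{L^1_\tau}\|\wt w(\cdot,n)\|_{L^2_\tau}$; the $\ell^2_n$-sum with the weight $\bra{n}^{s}$ placed on $w$ then closes the estimate.

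The main point to flag is that the two components of $Z^{s,\pm 1/2}$ must be closed simultaneously, and the mechanism that makes this possible is the observation that for each fixed $n$ the $L^1_\tau$ norm of $\wt u(\cdot,n)$ already controls $\|\wh u(\cdot,n)\|_{L^\infty_t}$. This is precisely how the $\ell^2_n L^1_\tau$ component of the $Z^{s,1/2}$ norm meshes with the triple-convolution-in-$\tau$ structure on the resonant frequency set, and the restriction $s\ge 0$ enters only because we need $\bra{n}^s\ge 1$ in order to transfer $\ell^\infty_n$-bounds through the $\ell^2_n$-norm.
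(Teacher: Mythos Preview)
Your proof is correct, but it proceeds along a different route from the paper's. The paper works on the physical side: after dropping the modulation weight via $\bra{\tau-n^4}^{-1/2}\le 1$ (for the $X^{s,-1/2}$ part) or via Cauchy--Schwarz in $\tau$ (for the $\ell_n^2L_\tau^1$ part), it reduces both pieces to the $L^2_t H^s$ norm of a triple product, applies H\"older in $t$ to land in $L^6_t H^s$ on each factor, and closes with the Sobolev embedding $X^{s,1/3}\hookrightarrow L^6_t H^s$ from Lemma~\ref{lem:sobolev}. In particular the paper only uses the $X^{s,1/2}$ component of the $Z^{s,1/2}$ norm on the right-hand side, which is formally a slightly sharper output. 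Your argument instead stays entirely on the Fourier side, exploiting Young's inequality in $\tau$ on the triple (resp.\ double) convolution and placing two of the three factors in the $\ell^2_n L^1_\tau$ component of the $Z^{s,1/2}$ norm; this avoids the Sobolev embedding altogether and makes the role of the $\ell^2_n L^1_\tau$ norm more transparent, at the cost of invoking the full $Z^{s,1/2}$ norm rather than only $X^{s,1/2}$. Both approaches use $s\ge 0$ only to pass from $\ell^\infty_n$ or $\ell^2_n$ to the weighted $\ell^2_n$, and both arrive at the stated estimate.
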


\begin{proof}
We first estimate the $X^{s,b}$ portion. For the term $\wh{u}(n)\ol{\wh{v}}(n)\wh{w}(n)$, since 
\begin{equation}\label{eq:reson1}
\norm{\bra{n}^{s}\wh{u}(n)\ol{\wh{v}}(n)\wh{w}(n)}_{\ell_n^2} \lesssim \norm{u}_{H^s}\norm{v}_{H^s}\norm{w}_{H^s},
\end{equation}
we have from the Sobolev embedding \eqref{eq:sobolev-1} that
\begin{equation}\label{eq:reson2}
\begin{aligned}
\norm{\ft_x^{-1}[\wh{u}(n)\ol{\wh{v}}(n)\wh{w}(n)]}_{X^{s,-\frac12}} &\lesssim \norm{\ft_x^{-1}[\bra{n}^{s}\wh{u}(n)\ol{\wh{v}}(n)\wh{w}(n)]}_{X^{0,0}}\\
&\lesssim \norm{\norm{u(t)}_{H^s}\norm{v}_{H^s}\norm{w}_{H^s}}_{L_t^2}\\
&\lesssim \norm{u}_{L_t^6(H^s)}\norm{v}_{L_t^6(H^s)}\norm{w}_{L_t^6(H^s)}\\ 
&\lesssim \norm{u}_{X^{s,\frac12}}\norm{w}_{X^{s,\frac12}}\norm{w}_{X^{s,\frac12}}.
\end{aligned}
\end{equation}
For the term $\left(\sum_{n_1}\wh{u}(n_1)\ol{\wh{v}}(n_1)\right)\wh{w}(n)$, since the following also holds
\begin{equation}\label{eq:reson3}
\norm{\bra{n}^{s}\Big(\sum_{n_1}\wh{u}(n_1)\ol{\wh{v}}(n_1)\Big)\wh{w}(n)}_{\ell_n^2} \lesssim \norm{u}_{H^s}\norm{v}_{H^s}\norm{w}_{H^s},
\end{equation}
we have similarly as before that
\begin{equation}\label{eq:reson4}
\begin{aligned}
\norm{\ft_x^{-1}\Big[\Big(\sum_{n_1}\wh{u}(n_1)\ol{\wh{v}}(n_1)\Big)\wh{w}(n)\Big]}_{X^{s,-\frac12}} &\lesssim \norm{\ft_x^{-1}\Big[\bra{n}^s\Big(\sum_{n_1}\wh{u}(n_1)\ol{\wh{v}}(n_1)\Big)\wh{w}(n)\Big]}_{X^{0,0}}\\
&\lesssim \norm{\norm{u(t)}_{H^s}\norm{v}_{H^s}\norm{w}_{H^s}}_{L_t^2}\\
&\lesssim \norm{u}_{L_t^6(H^s)}\norm{v}_{L_t^6(H^s)}\norm{w}_{L_t^6(H^s)}\\ 
&\lesssim \norm{u}_{X^{s,\frac12}}\norm{w}_{X^{s,\frac12}}\norm{w}_{X^{s,\frac12}}.
\end{aligned}
\end{equation}

Now we estimate the $\ell_n^2L_{\tau}^1$ portion. By using \eqref{eq:reson1} and \eqref{eq:reson3}, we have similarly as before that

\begin{equation}\label{eq:reson5}
\begin{aligned}
\norm{\bra{\tau - n^4}^{-1}\ft_t[\bra{s}^s\wh{u}(n)\ol{\wh{v}}(n)\wh{w}(n)]}_{\ell_n^2L_{\tau}^1} &\lesssim \norm{\ft_t[\bra{s}^s\wh{u}(n)\ol{\wh{v}}(n)\wh{w}(n)]}_{\ell_n^2L_{\tau}^2} \\
&\lesssim \norm{\norm{u(t)}_{H^s}\norm{v(t)}_{H^s}\norm{w(t)}_{H^s}}_{L_t^2}\\
&\lesssim \norm{u}_{L_t^6(H^s)}\norm{v}_{L_t^6(H^s)}\norm{w}_{L_t^6(H^s)} \\
&\lesssim \norm{u}_{X^{s,\frac12}}\norm{w}_{X^{s,\frac12}}\norm{w}_{X^{s,\frac12}}
\end{aligned}
\end{equation}
and
\begin{equation}\label{eq:reson6}
\begin{aligned}
\norm{\bra{\tau - n^4}^{-1}\ft_t\Big[\bra{s}^s\Big(\sum_{n_1}\wh{u}(n_1)\ol{\wh{v}}(n_1)\Big)\wh{w}(n)\Big]}_{\ell_n^2L_{\tau}^1} &\lesssim \norm{\ft_t\Big[\bra{s}^s\Big(\sum_{n_1}\wh{u}(n_1)\ol{\wh{v}}(n_1)\Big)\wh{w}(n)\Big]}_{\ell_n^2L_{\tau}^2} \\
&\lesssim \norm{\norm{u(t)}_{H^s}\norm{v(t)}_{H^s}\norm{w(t)}_{H^s}}_{L_t^2}\\
&\lesssim \norm{u}_{L_t^6(H^s)}\norm{v}_{L_t^6(H^s)}\norm{w}_{L_t^6(H^s)} \\
&\lesssim \norm{u}_{X^{s,\frac12}}\norm{w}_{X^{s,\frac12}}\norm{w}_{X^{s,\frac12}}.
\end{aligned}
\end{equation}

By gathering \eqref{eq:reson2}, \eqref{eq:reson4}, \eqref{eq:reson5} and \eqref{eq:reson6}, we complete the proof of \eqref{eq:resonant}.
\end{proof}

\section{Local well-posedness of 4WNLS below $L^2(\T)$}\label{sec:main}

In this section we prove Theorem \ref{thm:main}. First we recall the equation for 4WNLS in terms of the Fourier coefficients:
\begin{align}\label{eq:WNLS4}
\pt \wh{u}(n) -i\mu(n) \wh{u}(n) =& i  (|\wh{u}(n)|^2 - |\wh{u}_0(n)|^2)\wh{u}(n) -i \sum_{\N_n}\wh{u}(n_1)\owh{u}(n_2)\wh{u}(n_3), \quad \forall\ n\in \Z,\\
:=&\widehat{I(u)}(n)+\widehat{II(u)}(n), \nonumber
\end{align}
with $\mu(n)$ given by \eqref{eq:modified linear operator}.

\subsection{Existence}

Following the strategy explained in the introduction, we will use $Y^{s,1/2}$ to study the equation \eqref{eq:WNLS4}. The standard $X^{s,b}$ analysis gives
\begin{equation}\label{eq:linear}
\norm{u}_{Y^{s,1/2}_T} \lesssim \norm{u_0}_{H^s}+\norm{I(u)}_{L_T^2H^s}+\norm{II(u)}_{Y_T^{s,-1/2+\epsilon}}
\end{equation}
for $\epsilon>0$. The second term $II(u)$ is non-resonant and thus easy to handle. Indeed, we have
\begin{proposition}\label{prop:nonres-trilinear} 
Let $-1/2 < s < 0$, $0 < T \le 1$, $t \in [-T,T]$ and $u_0 \in C^{\infty}(\T)$. Suppose that $u$ is a complex-valued smooth solution to \eqref{eq:WNLS4} on $[-T,T]$ and $u \in Y_T^{s,1/2}$. Then for $\delta = (s + 1/2)/3$, the following estimate holds:
\[\norm{II(u)}_{Y_T^{s,-1/2+\delta}}\lesssim \norm{u}_{Y_T^{s,1/2}}^3.\]
\end{proposition}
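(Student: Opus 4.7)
The plan is a trilinear $X^{s,b}$-type estimate in the spirit of Lemma \ref{lem:trilinear L2}. First, I would dualize: writing $\|II(u)\|_{Y^{s,-1/2+\delta}}$ as the supremum of $|\langle II(u),v\rangle|$ over test functions $v$ with $\|v\|_{Y^{-s,1/2-\delta}}\le 1$, the task reduces to the quadrilinear bound
\[J:=\sum_{n,\N_n}\int_{\tau_1-\tau_2+\tau_3=\tau}\frac{\bra{n}^{s}\prod_{j=1}^{3}\bra{n_j}^{-s}}{\bra{\tau-\mu(n)}^{1/2-\delta}\prod_{j=1}^{3}\bra{\tau_j-\mu(n_j)}^{1/2}}\,f_1f_2f_3\,h\lesssim \prod_{j=1}^{3}\|f_j\|_{L^2}\,\|h\|_{L^2},\]
where $f_j,h\ge 0$ are the $L^2$ densities of $u$ and $v$ after one factors out the frequency and modulation weights.

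The engine is the non-resonance estimate. From \eqref{eq:resonant function}, and using Remark \ref{rem:negligible factor} to absorb the $|\wh u_0(n)|^2$ correction in $\mu(n)$, the modulations $\la_j:=\tau_j-\mu(n_j)$ satisfy $\max_{j}|\la_j|\gtrsim |n_1-n_2||n_2-n_3|(n^{\ast})^{2}$ on $\N_n$, where $n^{\ast}:=\max_{1\le j\le 4}|n_j|$. I would split $J$ into four pieces according to which $|\la_j|$ is largest, and in each piece decompose the dominant modulation weight as $\bra{\la_{\max}}^{-\rho}\cdot(\text{reserve})$ for some $\rho\in(0,1/2-\delta]$, converting the gain factor $\bra{\la_{\max}}^{-\rho}$ via non-resonance into the frequency weight $\bigl(|n_1-n_2||n_2-n_3|\bigr)^{-\rho}(n^{\ast})^{-2\rho}$, and using the reserve together with the remaining $\bra{\la_j}^{-1/2}$ to run either the $L^4$-Strichartz estimate (Lemma \ref{lem:L4}) or the $L_t^6L_x^2$ Sobolev embedding (Lemma \ref{lem:sobolev}), exactly as in the proof of Lemma \ref{lem:trilinear L2}. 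The summable factor $|n_1-n_2|^{-1-\epsilon}$ produced via the convolution identity \eqref{eq:convolution} then disposes of the sum over $n_1-n_2\ne 0$.

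Once Strichartz is applied, everything collapses to a power-counting summability check for
\[\bra{n}^{s}\prod_{j=1}^{3}\bra{n_j}^{-s}\bigl(|n_1-n_2||n_2-n_3|\bigr)^{-\rho}(n^{\ast})^{-2\rho}\]
over the dyadic ranges $|n_j|\sim N_j$ and $|n_1-n_2|\sim M_1$, $|n_2-n_3|\sim M_2$. Bounding $\bra{n_j}^{-s}\lesssim(n^{\ast})^{-s}$ and splitting according to whether $|n|\sim n^{\ast}$ or $|n|\ll n^{\ast}$ (the latter forcing at least two of the $|n_j|$, $j\le 3$, to be $\sim n^{\ast}$), the exponent of $n^{\ast}$ becomes affine in $s$ and $\rho$; taking $\rho=1/2-\delta$ maximal, the net $(n^{\ast})$-exponent is proportional to $-(2s+1)+O(\epsilon)$, which is strictly negative for $s>-1/2$. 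The specific value $\delta=(s+1/2)/3$ is the largest one that still absorbs every logarithmic loss from summing over the dyadic scales $M_1,M_2,N_j$ into the small gain $2s+1>0$, and this tight endpoint balance as $s\to-1/2^{+}$ is, in my view, the main obstacle.
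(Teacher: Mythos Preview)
Your proposal is correct and follows essentially the same approach as the paper: dualize, use the non-resonance relation to convert the maximal modulation weight into the frequency gain $(|n_1-n_2||n_2-n_3|(n^\ast)^2)^{-(1/2-\delta)}$, then close with $L^4$-Strichartz plus a Sobolev-type time embedding on the remaining factors, summing in $n'=n_1-n_2$ via the convolution structure. The paper carries this out by an explicit high/low interaction case split (five cases) rather than a general dyadic power count, and uses $L_t^\infty L_x^2$ (via $Y^{s,1/2+}\hookrightarrow C_tH^s$, with a small $\epsilon$-shift of modulation between two of the $u$-factors) instead of your $L_t^6L_x^2$; the resulting summable weight is $|n'|^{-(1+s)}$ rather than $|n'|^{-1-\epsilon}$, but these are execution details and the mechanism is the same.
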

\begin{proof}
From the duality argument in addition to the Plancherel theorem, we know
\[\begin{aligned}
\norm{II(u)}_{Y^{s,-\frac12+ \delta}} &= \sup_{\norm{h}_{Y^{0,\frac12 - \delta}} \le 1} \Bigg|\sum_{n \in \Z} \bra{n}^{s}\int_0^T  \sum_{\N_n} \wh{u}(t,n_1)\owh{u}(t,n_2)\wh{u}(t,n_3)\owh{h}(t,n) \; dt \Bigg|\\
&= \sup_{\norm{h}_{Y^{s,\frac12-\delta}} \le 1} \Bigg|\sum_{n \in \Z} \bra{n}^{2s} \int_0^T \wh{u}(t,n_1)\owh{u}(t,n_2)\wh{u}(t,n_3)\owh{h}(t,n) \; dt \; ds \Bigg|.
\end{aligned}\]
Hence, it suffices to show 
\begin{equation}\label{eq:trilinear}
\sum_{n \in \Z} \Bigg|\bra{n}^{2s}\mbox{Im}\Bigg[ \int_0^t  \sum_{\N_n} \wh{u}(s,n_1)\owh{u}(s,n_2)\wh{u}(s,n_3)\owh{h}(s,n) \; ds \Bigg]\Bigg|\lesssim \norm{u}_{Y_T^{s,\frac12}}^3\norm{h}_{Y_T^{s,\frac12-\delta}}.
\end{equation}
We first note from the identities 
\[n_1 - n_2 + n_3 = n \quad \mbox{and} \quad \tau_1 - \tau_2 + \tau_3 = \tau\]
that
\begin{equation}\label{eq:resonant relation2}
\begin{aligned}
G&:= \left(\tau_1 - \mu(n_1)\right) - \left(\tau_2 - \mu(n_2)\right) +\left(\tau_3 - \mu(n_3)\right)-\left(\tau - \mu(n)\right)\\
&=(n_1-n_2)(n_2-n_3)\left(n_1^2 + n_2^2 + n_3^2 + (n_1-n_2+n_3)^2 +2(n_1+n_3)^2 \right)\\
&+\left(|\wh{u}_0(n_1)|^2 - |\wh{u}_0(n_2)|^2 + |\wh{u}_0(n_3)|^2 - |\wh{u}_0(n)|^2\right)
\end{aligned}
\end{equation}
and
\begin{equation}\label{eq:modulation0}
\max\left\{|\tau - \mu(n)|, |\tau_j - \mu(n_j)|:j=1,2,3 \right\} \gtrsim |G|.
\end{equation}

Let $n^{\ast} = \max(|n_1|,|n_2|,|n_3|,|n|)$. Then, from \eqref{eq:resonant relation2} in addition to Remark \ref{rem:negligible factor}, we know
\begin{equation}\label{eq:hl2-0}
|G| \gtrsim |n_1-n_2||n_2-n_3| (n^{\ast})^2.
\end{equation}
We can show \eqref{eq:trilinear} by dividing several cases as follows:
\begin{itemize}
\item high $\times$ high $\times$ high $\Rightarrow$ high,
\item high $\times$ high $\times$ high $\Rightarrow$ low,
\item high $\times$ high $\times$ low $\Rightarrow$ high,
\item high $\times$ high $\times$ low $\Rightarrow$ low,
\item high $\times$ low $\times$ low $\Rightarrow$ high.
\end{itemize}

We may assume from \eqref{eq:modulation0} and\eqref{eq:hl2-0} that $|\tau - \mu(n)| \gtrsim |n_1-n_2||n_2-n_3| (n^{\ast})^2$ without loss of generality.\footnote{In other cases, it is enough to switch roles of $\wh{h}(n)$ and one of $\wh{u}(n_1)$, $\wh{u}(n_2)$ and $\wh{u}(n_3)$.} We also assume that $|\tau_1-\mu(n_1)|$ is the second maximum modulation.\footnote{In view of \eqref{eq:low result1} below, the choice of the second modulation does not affect our analysis. Thus, we do not further consider the case when one of $|\tau_2-\mu(n_2)|$ and $|\tau_3-\mu(n_3)|$ is the second maximum modulation.} Let $\wt{f}_1(\tau_1,n_1) = \bra{\tau_1 - \mu(n_1)}^{\epsilon}\bra{n_1}^s|\wt{u}(\tau_1,n_1)|$, $\wh{f}_2(n_2) = \bra{n_2}^s|\wh{u}(n_2)|$, $\wt{f}_3(\tau_3,n_3) = \bra{\tau_3 - \mu(n_3)}^{-\epsilon}\bra{n_3}^s|\wt{u}(\tau_3,n_3)|$ and $\wt{g}(\tau,n) = \bra{\tau - \mu(n)}^{1/2-\delta}\bra{n}^s|\wt{h}(\tau,n)|$ for $0 < \epsilon \ll 1$. Then, the left-hand side of \eqref{eq:trilinear} is reduced by
\begin{equation}\label{eq:tri1}
\int_0^t \sum_{n,\N_n} \frac{\bra{n}^{s}\bra{n_1}^{-s}\bra{n_2}^{-s}\bra{n_3}^{-s}}{(|n_1-n_2||n_2-n_3|(n^{\ast})^2)^{1/2-\delta}}\wh{f}_1(n_1)\owh{f}_2(n_2)\wh{f}_3(n_3)\owh{g}(n) \; ds.
\end{equation}
Let denote the multiplier in the summand of \eqref{eq:tri1} by
\[m(n_1,n_2,n_3,n) := \frac{\bra{n}^{s}\bra{n_1}^{-s}\bra{n_2}^{-s}\bra{n_3}^{-s}}{(|n_1-n_2||n_2-n_3|(n^{\ast})^2)^{1/2-\delta}}.\]
Then, since $1 \le |n_1-n_2|, |n_2-n_3| \le n^{\ast}$, the multiplier $m(n_1,n_2,n_3,n)$ is roughly reduced as follows in each case provided above:
\begin{itemize}
\item high $\times$ high $\times$ high $\Rightarrow$ high 
\[m(n_1,n_2,n_3,n) \lesssim \frac{1}{|n_1-n_2|^{\frac32 + 2s - 3\delta}},\]
\item high $\times$ high $\times$ high $\Rightarrow$ low
\[m(n_1,n_2,n_3,n) \lesssim \frac{1}{|n_1-n_2|^{2+3s-4\delta}},\]
\item other cases
\[m(n_1,n_2,n_3,n) \lesssim \frac{1}{|n_1-n_2|^{2+2s-4\delta}}.\]
\end{itemize}
We note that for fixed $-1/2 < s < 0$, we easily see that
\[s-\delta > -\frac12 \hspace{1em} \mbox{and} \hspace{1em} s - \Big(\frac{s+1/2}{2} \Big) > -1/2,\]
where $\delta = (s+1/2)/3$. Since $3/2 + 2s - 3\delta < 2 + 3s - 4\delta < 2 + 2s - 4\delta$ for $-1/2 < s < 0$, it is enough to consider 
\[m(n_1,n_2,n_3,n) \lesssim \frac{1}{|n_1-n_2|^{\frac32 + 2s-3\delta}}.\]
Then, we have from $L^4$ Strichartz estimate \eqref{eq:L4} and the embedding theorem ($Y^{s,b} \hookrightarrow C_tH^s$, $b>1/2$) that
\begin{equation}\label{eq:low result1}
\begin{aligned}
\mbox{LHS of }\eqref{eq:trilinear} &\lesssim \int_0^t \sum_{n,\N_n} \frac{1}{|n_1-n_2|^{3/2+2s-3\delta}}\wh{f}_1(n_1)\owh{f}_2(n_2)\wh{f}_3(n_3)\owh{g}(n) \; ds\\
&\le \int_0^t \sum_{n' \neq 0}  \frac{1}{|n'|^{3/2+2s-3\delta}} \wh{f_1 \overline{f}_2}(n')\wh{f_3 \overline{g}}(-n') \; ds\\
&\lesssim \norm{f_1}_{L_{t,x}^4}\norm{f_2}_{L_{t,x}^4}\norm{f_3}_{L_t^{\infty}L_x^2}\norm{g}_{L_{t,x}^2}\\
&\lesssim \norm{u}_{Y^{s,\frac12}}^3\norm{h}_{Y^{s,\frac12-\delta}},
\end{aligned}
\end{equation}
for $0 < \epsilon < 1/4$. The third inequality in \eqref{eq:low result1} holds thanks to 
\[3/2 + 2s -3\delta = 3/2 + 2\left( s - \frac{s+1/2}{2} \right) > 1/2.\]
\end{proof}

\begin{remark}\label{rem:Z}
By the same proof above, we can actually prove: for $s>-1/2$
\begin{align*}
\norm{II(u)}_{Z_T^{s,-1/2}}\lesssim \norm{u}_{Z_T^{s,1/2}}^3.
\end{align*}
Thus the equation \eqref{eq:WNLS1}, if with only non-resonant term, is locally well-posed in $H^s$ for $s>-1/2$.
\end{remark}

As an immediate result of Proposition \ref{prop:nonres-trilinear}, we have the following corollary:
\begin{corollary}\label{cor:trilinear}
Let $-1/2 < s < 0$, $0 < T \le 1$, $t \in [-T,T]$ and $u_0 \in C^{\infty}(\T)$. Suppose that $u$ is a complex-valued smooth solution to \eqref{eq:WNLS4} on $[-T,T]$ and $u \in Y_T^{s,1/2}$. Then $u$ satisfies
\begin{equation}\label{eq:trilinear1}
\sum_{n \in\Z} \bra{n}^{2s}|\wh{u}(t,n)|^2\lesssim \norm{u_0}_{H^{s}}^2 + \norm{u}_{Y_T^{s,\frac12}}^4.
\end{equation}
\end{corollary}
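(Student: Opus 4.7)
The plan is an energy-type argument at the Fourier side that exploits the specific form of the resonant perturbation in \eqref{eq:WNLS4}. Since $u_0\in C^\infty(\T)$ and $u$ is a smooth solution, I can differentiate $|\wh u(t,n)|^2$ in $t$ pointwise in $n$ without any regularization. The decisive observation is that the ``hard'' terms on the right-hand side of \eqref{eq:WNLS4} contribute nothing to $\pt|\wh u(n)|^2$: the linear piece gives $i\mu(n)|\wh u(n)|^2$, which is purely imaginary, and the diagonal perturbation gives $i\bigl(|\wh u(n)|^2-|\wh u_0(n)|^2\bigr)|\wh u(n)|^2$, which is again purely imaginary because the scalar factor is real. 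Both vanish upon taking the real part of $\pt\wh u(n)\,\owh u(n)$, so only the non-resonant cubic survives.

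This leaves the pointwise identity
\[|\wh u(t,n)|^2 = |\wh u_0(n)|^2 + 2\,\mbox{Im}\int_0^t \sum_{\N_n}\wh u(s,n_1)\owh u(s,n_2)\wh u(s,n_3)\owh u(s,n)\,ds,\]
valid for every $n\in\Z$. Multiplying by $\bra n^{2s}$, summing over $n$, and moving absolute values inside the sum yields
\[\sum_n\bra n^{2s}|\wh u(t,n)|^2 \le \norm{u_0}_{H^s}^2 + 2\sum_n \bra n^{2s}\Bigl|\mbox{Im}\int_0^t\sum_{\N_n}\wh u(s,n_1)\owh u(s,n_2)\wh u(s,n_3)\owh u(s,n)\,ds\Bigr|.\]

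The remaining term is exactly the left-hand side of \eqref{eq:trilinear} with the test function $h$ taken to be the solution $u$ itself. Applying Proposition \ref{prop:nonres-trilinear} together with the trivial embedding $Y_T^{s,1/2}\hookrightarrow Y_T^{s,1/2-\delta}$ (valid since $1/2-\delta<1/2$) controls it by $\norm{u}_{Y_T^{s,1/2}}^4$, which completes the proof. There is no genuine obstacle here: the Wick ordering that produced \eqref{eq:WNLS1} was introduced precisely so that the surviving resonant piece cannot feed back into the modulus of individual Fourier coefficients, and Proposition \ref{prop:nonres-trilinear} was stated with the weight $\bra n^{2s}$ already placed inside the outer $\ell^1_n$ sum, which is exactly the form required for an $H^s$ energy balance.
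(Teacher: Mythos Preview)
Your argument is correct and coincides with the paper's own proof: derive the pointwise identity $\pt|\wh u(n)|^2 = \pm 2\,\mbox{Im}\bigl[\sum_{\N_n}\wh u(n_1)\owh u(n_2)\wh u(n_3)\owh u(n)\bigr]$ from \eqref{eq:WNLS4}, multiply by $\bra n^{2s}$, sum, integrate, and invoke the quadrilinear bound \eqref{eq:trilinear} established inside the proof of Proposition~\ref{prop:nonres-trilinear} with $h=u$. One small wording point: the weighted $\ell^1_n$ estimate you need is \eqref{eq:trilinear}, which appears in the \emph{proof} of Proposition~\ref{prop:nonres-trilinear} rather than in its statement; the paper likewise appeals to that display.
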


\begin{proof}
From \eqref{eq:WNLS4}, we have
\begin{equation}\label{eq:energy}
\pt |\wh{u}(t,n)|^2 = -2\mbox{Im}\Big[\sum_{\N_n} \wh{u}(t,n_1)\owh{u}(t,n_2)\wh{u}(t,n_3)\owh{u}(t,n)\Big].
\end{equation}
Then, multiplying $\bra{n}^{2s}$ and taking the summation and the integration with respect to $x$ and $t$, respectively, to the both side of \eqref{eq:energy} in addition to Proposition  \ref{prop:nonres-trilinear} yield that
\begin{equation}\label{eq:res-nonres}
\begin{aligned}
\sum_{n \in \Z}\bra{n}^{2s}\big||\wh{u}(t,n)|^2-|\wh{u}_0(n)|^2\big| &=2\sum_{n \in \Z} \Bigg|\bra{n}^{2s}\mbox{Im}\Bigg[ \int_0^t  \sum_{\N_n} \wh{u}(s,n_1)\owh{u}(s,n_2)\wh{u}(s,n_3)\owh{u}(s,n) \; ds \Bigg]\Bigg|\\
&\lesssim \norm{u}_{Y_T^{s,\frac12}}^4.
\end{aligned}
\end{equation}
\end{proof}

From Proposition \ref{prop:nonres-trilinear} and Remark \ref{rem:Z}, we know the enemy to prevent low-regularity well-posedness is the resonant term $I(u)$. For $I(u)$, we have
\begin{equation}\label{eq:I}
\norm{I(u)}_{L_T^2H^s}\lesssim \left(\sup_{t,n}\left||\wh{u}(n)|^2 - |\wh{u}_0(n)|^2\right|\right)\norm{u}_{Y^{s,1/2}_{T}}.
\end{equation}
The following proposition is the important ingredient to control the term $I$, in particular 
\begin{equation}\label{eq:gap}
\sup_{n}\left||\wh{u}(n)|^2 - |\wh{u}_0(n)|^2\right|,
\end{equation}
and with this, we can show the existence of a solution to \eqref{eq:WNLS4} below $L^2(\T)$:

\begin{proposition}\label{prop:main}
Let $-1/3 \le s < 0$, $0 < T \le 1$, $t \in [-T,T]$ and $u_0 \in C^{\infty}(\T)$. Suppose that $u$ is a complex-valued smooth solution to \eqref{eq:WNLS4} on $[-T,T]$ and $u \in Y_T^{s,1/2}$. Then the following estimate holds:
\begin{equation}\label{eq:main}
\begin{aligned}
\sup_{n \in \Z} \Bigg|\mbox{Im}\Bigg[ \int_0^t  \sum_{\N_n} \wh{u}(s,n_1)&\owh{u}(s,n_2)\wh{u}(s,n_3)\owh{u}(s,n) \; ds \Bigg]\Bigg|\\
&\lesssim \norm{u_0}_{H^s}^4 + \big(\norm{u_0}_{H^{s}}^2 + \norm{u}_{Y_T^{s,\frac12}}^4\big)^2 +  \norm{u}_{Y_T^{s,\frac12}}^4 + \norm{u}_{Y_T^{s,\frac12}}^6.
\end{aligned}
\end{equation}
\end{proposition}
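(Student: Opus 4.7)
The plan is to apply the differentiation-by-parts (normal form) method to
\[
J_n(t) := \int_0^t \sum_{\N_n} \wh u(s,n_1)\owh u(s,n_2)\wh u(s,n_3)\owh u(s,n) \, ds.
\]
Writing $\wh u(s,m) = e^{is\mu(m)}\wh w(s,m)$ exposes the oscillation $e^{isG}$, where $G$ is the resonance function in \eqref{eq:resonant relation2}; on $\N_n$ one has $|G|\gtrsim|n_1-n_2||n_2-n_3|(n^*)^2$ by \eqref{eq:hl2-0}. Integrating by parts in $s$ through the identity $e^{isG} = \tfrac{1}{iG}\partial_s e^{isG}$ trades a time integration for the smoothing weight $1/G$, splitting $J_n(t)$ into (I) a boundary contribution at $s=0$, (II) a boundary contribution at $s=t$, and (III) a bulk contribution obtained by distributing $\partial_s$ across the four $\wh w$-factors.

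For (I), I estimate $\sup_n\sum_{\N_n}|G|^{-1}|\wh u_0(n_1)||\wh u_0(n_2)||\wh u_0(n_3)||\wh u_0(n)|$ by parametrizing $\N_n$ via $k_1=n_1-n_2$, $k_2=n_2-n_3$, inserting the pointwise control $|\wh u_0(m)|\le\bra m^{-s}\norm{u_0}_{H^s}$ for part of the factors, and pairing the rest by Cauchy--Schwarz so that the resulting $\ell^2$-sums reconstruct the $H^s$-norm of $u_0$. The Schur-type remainder that this produces is convergent exactly at $s=-1/3$, yielding the $\norm{u_0}_{H^s}^4$ contribution. For (II), the same estimate with $u(t)$ in place of $u_0$ yields $\norm{u(t)}_{H^s}^4$, which by Corollary \ref{cor:trilinear} is dominated by $\bigl(\norm{u_0}_{H^s}^2+\norm{u}_{Y_T^{s,1/2}}^4\bigr)^2$.

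For (III), substitution of
\[
\partial_s\wh w(s,n_j) = i e^{-is\mu(n_j)}\bigl(\widehat{I(u)}(n_j)+\widehat{II(u)}(n_j)\bigr)
\]
from \eqref{eq:WNLS4} separates the bulk into a non-resonant part and a resonant part. The non-resonant substitution produces a six-factor multilinear integral, which I control by $\norm{u}_{Y_T^{s,1/2}}^6$ by combining the $1/G$-smoothing with the $L^4$-Strichartz estimate of Lemma \ref{lem:L4} and the Sobolev embedding of Lemma \ref{lem:sobolev}, in the spirit of Proposition \ref{prop:nonres-trilinear}. The resonant substitution inserts the extra factor $(|\wh u(s,n_j)|^2-|\wh u_0(n_j)|^2)$; using the pointwise bound $\bra{n_j}^{2s}\bigl||\wh u(s,n_j)|^2-|\wh u_0(n_j)|^2\bigr|\lesssim\norm{u}_{Y_T^{s,1/2}}^4$ extracted from \eqref{eq:res-nonres} and estimating the residual four-factor sum as in (II) yields the $\norm{u}_{Y_T^{s,1/2}}^4$ term.

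The principal obstacle is the multilinear sum estimate at the endpoint $s=-1/3$: the $|G|^{-1}$ gain must exactly compensate the combined $H^s$-loss $\prod\bra{\cdot}^{-s}$ coming from four Fourier factors. The essential improvement over \cite{TT2004,NTT2010,MT2017}, highlighted in the introduction, is that here we only require the $\ell^\infty_n$-bound rather than the $\ell^1_n$-bound, which furnishes precisely the extra margin that allows the Cauchy--Schwarz argument to close at $s=-1/3$. A case analysis separating the regimes in which $|n_1|,|n_2|,|n_3|,|n|$ are comparable from those in which one or two of them are small is still needed in each of (I)--(III), but in every regime the denominator $|n_1-n_2||n_2-n_3|(n^*)^2$ supplies exactly the decay required.
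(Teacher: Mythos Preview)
Your overall strategy---integration by parts in time to expose the smoothing weight, splitting into boundary and bulk contributions, and invoking Corollary~\ref{cor:trilinear} for the boundary term at $s=t$---matches the paper's approach. Two differences in execution are worth noting. First, the paper integrates by parts against the phase $n^4$ (yielding $1/H$), not against $\mu(n)$; consequently the resonant contribution when $\partial_s$ hits a factor is $|\wh u(n_j)|^2\wh u(n_j)$, not $(|\wh u(n_j)|^2-|\wh u_0(n_j)|^2)\wh u(n_j)$. Your variant is workable, but your claim that it ``yields the $\norm{u}_{Y_T^{s,1/2}}^4$ term'' is off: after extracting $\langle n_j\rangle^{-2s}\norm{u}_{Y_T^{s,1/2}}^4$ from \eqref{eq:res-nonres}, the residual four-factor expression still carries a time integral and must be estimated as in the paper's $II_1$, giving a higher-degree contribution. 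Second, the paper does \emph{not} apply integration by parts uniformly: only the fully comparable case (all four frequencies $\sim N$) uses it, while the remaining high--low configurations are handled directly via the modulation bound and Lemma~\ref{lem:L4}.

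The genuine gap is in your treatment of the non-resonant bulk contribution, the six-factor multilinear integral. You dispatch it ``in the spirit of Proposition~\ref{prop:nonres-trilinear}'', but this is precisely the bottleneck of the entire proof: the threshold $s\ge -1/3$ arises \emph{here}, not in the boundary terms as you suggest. The boundary estimate \eqref{eq:nonresonant estimate} already closes for $s>-1/2$; it is the six-factor estimate \eqref{eq:estimation-1} in the paper's Case~A (all six frequencies comparable) that produces the factor $N^{-2-6s}$ and forces $s\ge -1/3$, as recorded in Remark~\ref{rem:threshold}. In that case the iterated resonance function $\wt G$ of \eqref{eq:resonant relation1} can be small (the inner and outer oscillations may cancel), so no modulation gain is available and the paper instead exploits the gap between $b=1/2$ and the $L^4$ threshold $b>5/16$ through a specific change of variables. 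Moreover, the case analysis you mention is over the four outer frequencies $n_1,n_2,n_3,n$; the paper requires a \emph{second} layer of case analysis over the three inner frequencies $n_{11},n_{12},n_{13}$ arising from the substitution (Cases~A, B-1 through B-6, C-1 through C-3), and this is where the work lies. Your proposal does not engage with this difficulty.
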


\begin{proof}
Let us define the projection operator as follows: For $N = 2^{\Z_{\ge0}}$, let 
\begin{equation}\label{eq:I_N}
I_1 = [-1,1] \qquad \mbox{and} \qquad I_N = [-2N,-N/2] \cup [N/2, 2N], \quad N \ge 2.
\end{equation}
We define $P_N$ by
\[\ft_x[P_Nf](n) = \chi_{I_N}(n)\wt{f}(n),\]
where $\chi_E$ is the characteristic function on $E$. We use the convention
\[P_{\le N} = \sum_{M \le N} P_M, \hspace{1em} P_{>N} = \sum_{M > N} P_M.\]
Then, the left-hand side of \eqref{eq:main} bounded by
\begin{equation}\label{eq:main1}
\sup_{N\ge1}\sum_{n \in I_N} \Bigg|\mbox{Im}\Bigg[ \int_0^t  \sum_{\N_n} \wh{u}(s,n_1)\owh{u}(s,n_2)\wh{u}(s,n_3)\owh{u}(s,n) \; ds \Bigg]\Bigg|.
\end{equation}
For fixed $N$, we further decompose $u$ in the integrand above into the following three pieces:
\[u = u_{low} + u_{med} + u_{high},\]
where $u_{med} =  P_{N}u$, $u_{low} = P_{\le N}u -  u_{med}$ and $u_{high} =  P_{\ge N}u - u_{med}$. Then, \eqref{eq:main1} can be divided into several cases.

\textbf{Case I.} (\emph{high $\times$ high $\times$ high $\Rightarrow$ high}) It suffices to control the following term 
\begin{equation}\label{eq:hhhh0}
\sup_{N\ge1}\sum_{n \in I_N} \Bigg|\mbox{Im}\Bigg[ \int_0^t  \sum_{\N_n} \wh{u}_{med}(s,n_1)\owh{u}_{med}(s,n_2)\wh{u}_{med}(s,n_3)\owh{u}(s,n) \; ds \Bigg]\Bigg|.
\end{equation}
From the following observation
\[\begin{aligned}
\pt \left( e^{-itn^4}\wh{u}(n) \right) &= e^{-itn^4} \left( \pt \wh{u}(n) - in^4 \wh{u}(n) \right)\\
&= e^{-itn^4} \left( i|\wh{u}(n)|^2\wh{u}(n) - i\sum_{\N_n}\wh{u}(n_1)\owh{u}(n_2)\wh{u}(n_3) \right),
\end{aligned}\]
we can apply the integration by parts with respect to the time variable $s$ to get
\[\begin{aligned}
\int_0^t&  \sum_{\N_n} \wh{u}(s,n_1)\owh{u}(s,n_2)\wh{u}(s,n_3)\owh{u}(s,n) \; ds \\
&=	\int_0^t  \sum_{\N_n} e^{-is(n_1^4 - n_2^4 + n_3^4 - n^4)}\big(e^{-isn_1^4} \wh{u}(s,n_1)\big)\big(\ol{e^{-isn_2^4} \wh{u}(s,n_2)} \big) \big(e^{-isn_3^4} \wh{u}(s,n_3) \big) \big(\ol{e^{-isn^4} \wh{u}(s,n)} \big) \; ds\\
&=\sum_{\N_n} \frac{1}{iH}\left( \wh{u}(t,n_1)\owh{u}(t,n_2)\wh{u}(t,n_3)\owh{u}(t,n) - \wh{u}_0(n_1)\owh{u}_0(n_2)\wh{u}_0(n_3)\owh{u}_0(n) \right)\\
&-\int_0^t \sum_{\N_n} \frac{e^{is(n_1^4 - n_2^4 + n_3^3 - n^4)}}{iH} \cdot \frac{d}{ds}\Big[\big(e^{-isn_1^4} \wh{u}(s,n_1)\big)\big(\ol{e^{-isn_2^4} \wh{u}(s,n_2)} \big) \big(e^{-isn_3^4} \wh{u}(s,n_3) \big) \big(\ol{e^{-isn^4} \wh{u}(s,n)} \big) \Big]\; ds,
\end{aligned}\]
where $H$ is defined as in \eqref{eq:resonant function}. \eqref{eq:hhhh0} is reduced as follows:
\[\begin{aligned}
&\eqref{eq:hhhh0}\\
&\le \sup_{N\ge1}\sum_{n \in I_N} \left| \sum_{\N_n} \frac{1}{H}\left( \wh{u}_{med}(t,n_1)\owh{u}_{med}(t,n_2)\wh{u}_{med}(t,n_3)\owh{u}(t,n) - \wh{u}_{0,med}(n_1)\owh{u}_{0,med}(n_2)\wh{u}_{0,med}(n_3)\owh{u}_{0}(n) \right)\right|\\
&+\sup_{N\ge1}\sum_{n \in I_N} \Bigg|\int_0^t \sum_{\N_n} \frac{e^{is(n_1^4 - n_2^4 + n_3^3 - n^4)}}{H} \\
&\hspace{9em}\times \frac{d}{ds}\Big[\big(e^{-isn_1^4} \wh{u}_{med}(s,n_1)\big)\big(\ol{e^{-isn_2^4} \wh{u}_{med}(s,n_2)} \big) \big(e^{-isn_3^4} \wh{u}_{med}(s,n_3) \big) \big(\ol{e^{-isn^4} \wh{u}(s,n)} \big) \Big]\; ds \Bigg|\\
&=: I + II.
\end{aligned}\]

For $I$, it is enough to consider
\begin{equation}\label{eq:hhhh1}
\sup_{N\ge1}\sum_{n \in I_N} \left| \sum_{\N_{n_4}} \frac1H \wh{f}_{1,med}(n_1)\owh{f}_{2,med}(n_2)\wh{f}_{3,med}(n_2)\owh{f}_{4,med}(n_4) \right|
\end{equation}
Let $\wh{g}_i(n) = \bra{n}^s|\wh{f}_{i,med}(n)|$, $i =1,3$, $\wh{g}_i(n) = \bra{n}^s|\wh{\overline{f}}_{i,med}(-n)|$, $i=2,4$. Since $|H| \gtrsim |(n_1-n_2)(n_2-n_3)|n_4^2 $ and $|n_4| \sim N$, we have
\begin{equation}\label{eq:nonresonant estimate}
\begin{aligned}
\eqref{eq:hhhh1} &\lesssim \sup_{N\ge1}N^{-(2+4s)}\sum_{n_4, \N_{n_4}} \frac{1}{|n_1 -n_2|} \wh{g}_1(n_1)\wh{g}_2(-n_2)\wh{g}_3(n_3)\wh{g}_4(-n_4) \\
&\le \sup_{N\ge1}N^{-(2+4s)}\sum_{n_1,n_4, n' \neq 0} \frac{1}{|n'|}\wh{g}_1(n_1)\wh{g}_2(n'-n_1)\wh{g}_3(n_4-n')\wh{g}_4(-n_4)\\
&=\sup_{N\ge1}N^{-(2+4s)}\sum_{0 < |n'| \le N} \frac{1}{|n'|}\wh{g_1g_2}(n')\wh{g_3g_4}(-n')\\
&\lesssim \sup_{N\ge1}N^{-(2+4s)}\log N\norm{\wh{g_1g_2}}_{\ell^{\infty}}\norm{\wh{g_3g_4}}_{\ell^{\infty}}\\
&\lesssim \prod_{j=1}^{4}\norm{f_j}_{H^s},
\end{aligned}
\end{equation}
whenever $2 + 4s > 0  \Rightarrow -1/2 < s < 0$. Hence, from \eqref{eq:trilinear1}, we obtain
\[I \lesssim \norm{u_0}_{H^s}^4 + \norm{u(t)}_{H^s}^4 \lesssim \norm{u_0}_{H^s}^4 + \big(\norm{u_0}_{H^{s}}^2 + \norm{u}_{Y_T^{s,\frac12}}^4\big)^2,\]
whenever $-1/2 < s < 0$.

\begin{remark}\label{rem:nontrivial}
This procedure cannot be directly applied for the uniqueness part, since we do not have \eqref{eq:trilinear1} for the difference of two solutions. However, we use a trick in \eqref{eq:nonresonant estimate} to get the resonance estimate for the difference of two solutions. See Proposition \ref{prop:main2} below.
\end{remark}

For $II$, we consider the case when the time derivative is taken in the $n_1$-frequency mode. Then, $II$ is rewritten as  
\begin{equation}\label{eq:hhhh2}
\begin{aligned}
\sup_{N \ge 1}\sum_{n \in I_N} &\left| \int_0^t \sum_{\N_n} \frac1H \left[ |\wh{u}_{med}(n_1)|^2\wh{u}_{med}(n_1) - P_{N}\sum_{\N_{n_1}} \wh{u}(n_{11})\owh{u}(n_{12})\wh{u}(n_{13}) \right]\owh{u}_{med}(n_2)\wh{u}_{med}(n_3)\owh{u}(n) \; ds \right| \\
&=: II_1 + II_2.
\end{aligned}
\end{equation}
We remark that the estimate does not depend on the choice of functions in which the time derivative is taken, and hence it is enough to consider only this case above.

For the part $II_1$, we recall the resonant relation associated to $\mu(n)$
\begin{equation}\label{eq:resonant relation}
\begin{aligned}
G&:= \left(\tau_1 - \mu(n_1)\right) - \left(\tau_2 - \mu(n_2)\right) +\left(\tau_3 - \mu(n_3)\right)-\left(\tau - \mu(n)\right)\\
&=(n_1-n_2)(n_2-n_3)\left(n_1^2 + n_2^2 + n_3^2 + (n_1-n_2+n_3)^2 +2(n_1+n_3)^2 \right)\\
&+\left(|\wh{u}_0(n_1)|^2 - |\wh{u}_0(n_2)|^2 + |\wh{u}_0(n_3)|^2 - |\wh{u}_0(n)|^2\right)
\end{aligned}
\end{equation}
and the support property
\[\max\left\{|\tau - \mu(n)|, |\tau_j - \mu(n_j)|:j=1,2,3 \right\} \gtrsim |G|.\]
Similarly as the estimate of $I$, we first consider 
\begin{equation}\label{eq:endpoint1}
\sup_{N \ge 1}\sum_{n \in I_N,\N_n} \left| \frac1H |\wh{u}_{med}(n_1)|^2\wh{u}_{med}(n_1)\owh{u}_{med}(n_2)\wh{u}_{med}(n_3)\owh{u}(n) \right|.
\end{equation}
We assume that $|\tau - \mu(n)| \gtrsim |G|$. Let us define 
\[\wh{g}(n) = \bra{n}^s\wh{u}_{med}(n) \hspace{1em} \mbox{and} \hspace{1em} \wt{h}(n) = \bra{\tau - \mu(n)}^{\frac12}\bra{n}^s\wt{u}(\tau,n).\]
Since $|\wh{u}_{med}(n_1)|^2 \le \norm{u}_{H^s}^2$, the similar argument as in \eqref{eq:nonresonant estimate} yields
\begin{equation}\label{eq:endpoint2}
\begin{aligned}
\eqref{eq:endpoint1} &\lesssim \norm{u}_{H^s}^2\sup_{N \ge 1}N^{-(3+6s)}\sum_{n' \neq 0} \frac{1}{|n'|^{3/2}}\wh{g\overline{g}}(n')\wh{g\overline{h}}(-n')\\
&\lesssim \norm{u}_{H^s}^2\sup_{N \ge 1}N^{-(3+6s)}\norm{\wh{g\overline{g}}}_{\ell^{\infty}}\norm{\wh{g\overline{h}}}_{\ell^{\infty}}\\
&\lesssim \norm{u}_{H^s}^5 \norm{\ft^{-1}[\bra{\tau-\mu(n)}^{1/2}\bra{n}^s\wh{u}(n)]}_{L_x^2}
\end{aligned}
\end{equation}
whenever $3+6s\ge0 \Rightarrow -1/2 \le s < 0$. By the Sobolev embedding property (Lemma \ref{lem:sobolev}), we finally have
\[II_1 \lesssim \norm{u}_{L_t^{10}H^s}^5 \norm{\ft^{-1}[\bra{\tau-\mu(n)}^{1/2}\bra{n}^s\wh{u}(n)]}_{L_{t,x}^2} \lesssim \norm{u}_{Y^{s,\frac12}}^6.\]
We can see that the choice of the maximum modulation does not affect \eqref{eq:endpoint2}, and thus we do not need to dear with the other cases. 

For the part $II_2$, we further decompose $\wh{u}(n_{1,i})$ into $\wh{u}_{low}(n_{1,i}), \wh{u}_{med}(n_{1,i})$ and $\wh{u}_{high}(n_{1,i})$, $i=1,2,3$. Then, $II_2$ can be treated by dividing $\wh{u}(n_{11})\owh{u}(n_{12})\wh{u}(n_{13})$ into the following cases:
\[\wh{u}_{med}(n_{11})\owh{u}_{med}(n_{12})\wh{u}_{med}(n_{13}), \tag{Case A}\]
\[\wh{u}_{med}(n_{11})\owh{u}_{low}(n_{12})\wh{u}_{low}(n_{13}) \hspace{1em} (\Leftrightarrow \wh{u}_{low}(n_{11})\owh{u}_{low}(n_{12})\wh{u}_{med}(n_{13})), \tag{Case B-1}\]
\[\wh{u}_{low}(n_{11})\owh{u}_{med}(n_{12})\wh{u}_{low}(n_{13}),\tag{Case B-2}\]
\[\wh{u}_{med}(n_{11})\owh{u}_{high}(n_{12})\wh{u}_{high}(n_{13}) \hspace{1em} (\Leftrightarrow \wh{u}_{high}(n_{11})\owh{u}_{med}(n_{12})\wh{u}_{high}(n_{13})), \tag{Case B-3}\]
\[\wh{u}_{high}(n_{11})\owh{u}_{med}(n_{12})\wh{u}_{high}(n_{13}), \tag{Case B-4}\]
\[\wh{u}_{low}(n_{11})\owh{u}_{high}(n_{12})\wh{u}_{high}(n_{13}) \hspace{1em} (\Leftrightarrow \wh{u}_{high}(n_{11})\owh{u}_{high}(n_{12})\wh{u}_{low}(n_{13})), \tag{Case B-5}\]
\[\wh{u}_{high}(n_{11})\owh{u}_{low}(n_{12})\wh{u}_{high}(n_{13}),\tag{Case B-6}\]
\[\wh{u}_{med}(n_{11})\owh{u}_{med}(n_{12})\wh{u}_{low}(n_{13}) \hspace{1em} (\Leftrightarrow \wh{u}_{low}(n_{11})\owh{u}_{med}(n_{12})\wh{u}_{med}(n_{13})), \tag{Case C-1}\]
\[\wh{u}_{med}(n_{11})\owh{u}_{low}(n_{12})\wh{u}_{med}(n_{13}),\tag{Case C-2}\]
\[\wh{u}_{high}(n_{11})\owh{u}_{high}(n_{12})\wh{u}_{high}(n_{13}).\tag{Case C-3}\]

\textbf{Case A} In this case, since all frequencies are comparable, we may not use the maximum modulation effect in view of the new resonant function defined below \eqref{eq:resonant relation1}: From the identities
\[n_{11} - n_{12} + n_{13} - n_2 + n_3 = n\]
and
\[\tau_{11} - \tau_{12} + \tau_{13} - \tau_2 + \tau_3 = \tau,\]
we know
\[\sum_{j=1}^3(-1)^{j-1}(\tau_{1j} - \mu(n_{1j})) - (\tau_{2} - \mu(n_2)) + \tau_{3} - \mu(n_3) = \tau - \mu(n) -\wt{G},\]
where $\wt{G}$ is defined as
\begin{equation}\label{eq:resonant relation1} 
\begin{aligned}
\wt{G}&:= \left(\tau_{11} - \mu(n_{11})\right) - \left(\tau_{12} - \mu(n_{12})\right) +\left(\tau_{13} - \mu(n_{13})\right)-\left(\tau_2 - \mu(n_2)\right) + \left(\tau_3 - \mu(n_3)\right) - \left(\tau - \mu(n)\right)\\
&=(n_1-n_2)(n_2-n_3)\left(n_1^2 + n_2^2 + n_3^2 + n^2 +2(n_1+n_3)^2 \right)\\ 
&+ (n_{11}-n_{12})(n_{12}-n_{13})\left(n_{11}^2 + n_{12}^2 + n_{13}^2 + n_1^2 +2(n_{11}+n_{13})^2 \right)\\
&+\left(|\wh{u}_0(n_{11})|^2 - |\wh{u}_0(n_{12})|^2 + |\wh{u}_0(n_{13})|^2 - |\wh{u}_0(n_2)|^2 + |\wh{u}_0(n_3)|^2 - |\wh{u}_0(n)|^2\right).
\end{aligned}
\end{equation}
Note that 
\begin{equation}\label{eq:modulation1}
L_{max}:=\max(|\tau-\mu(n)|, |\tau_i - \mu(n_i)|; i=2,3, |\tau_{1j}-\mu(n_{1j})|; j=1,2,3) \gtrsim |\wt{G}|.
\end{equation}

We assume $L_{max} = |\tau - \mu(n)|$. Let us define 
\[\wh{g}(n) = \bra{n}^s\wh{u}_{med}(n),\]
\[\wt{h}(n) = \bra{\tau - \mu(n)}^{-\ep}\bra{n}^s\wt{u}_{med}(n)\]
and
\[\wt{f}(n) = \bra{\tau - \mu(n)}^{2\ep}\bra{n}^s\wt{u}_{med}(n),\]
where $\ep > 0$ will be chosen later. Since 
\[\bra{\tau - \mu(n)}^{-2\ep} \le \bra{\tau_2 - \mu(n_2)}^{-\ep}\bra{\tau_{11} - \mu(n_{11})}^{-\ep}\]
and $|H| \gtrsim |n_1-n_2||n_2-n_3|N^2$, by performing the change of variables 
\[\begin{matrix}
n' = n_{12} - n_{13} \\ 
n''=n_3-n = n_2-n_1
\end{matrix}
\hspace{0.5em}\Rightarrow\hspace{0.5em} 
\begin{matrix}
n_{11} = n_{11}, \hspace{0.5em} n_{12} = n_{12},\hspace{0.5em} n_{13}= n_{12} - n',\\ 
n_2 = n_{11} - n' + n'', \hspace{0.5em} n_3 = n_3, \hspace{0.5em} n = n_3 - n'',
\end{matrix}\] 
we have
\begin{equation}\label{eq:estimation-1}
\begin{aligned}
\sup_{N \ge 1}&\sum_{n \in I_N}\sum_{N_n}\frac{1}{H}P_N[\sum_{N_{n_1}}\wh{u}_{med}(n_{11})\owh{u}_{med}(n_{12})\wh{u}_{med}(n_{13})]\owh{u}_{med}(n_2)\wh{u}_{med}(n_3)\owh{u}(n)\\
&\lesssim \sup_{N \ge 1}N^{-2-6s}\sum_{n \in I_N}\sum_{N_n}\sum_{N_{n_1}}\frac{1}{|n_1-n_2||n_2-n_3|}\wh{h}(n_{11})\owh{g}(n_{12})\wh{g}(n_{13})\owh{h}(n_2)\wh{g}(n_3)\owh{f}(n)\\
&= \sup_{N \ge 1}N^{-2-6s}\sum_{\substack{n_{11},n_{12},n_3, n'\\|n_{11}-n'+n''-n_3| \neq 0\\ 0< |n''| \le N}}\frac{1}{|n''||n_{11}-n'+n''-n_3|}\\
&\hspace{11em}\times\wh{h}(n_{11})\owh{g}(n_{12})\wh{g}(n_{12}-n')\owh{h}(n_{11}-n'+n'')\wh{g}(n_3)\owh{f}(n_3-n'')\\
&\lesssim \sup_{N \ge 1}N^{-2-6s}\sum_{0 < |n''| \le N}\sum_{\substack{n_3,n_{11},n'\\|n_{11}-n'+n''-n_3| \neq 0}}\frac{1}{|n''||n_{11}-n'+n''-n_3|}\\
&\hspace{13em}\times\wh{h}(n_{11})\wh{\overline{g}g}(-n')\owh{h}(n_{11}-n'+n'')\wh{g}(n_3)\owh{f}(n_3-n'')\\
&\lesssim \sup_{N \ge 1}N^{-2-6s}\sum_{0 < |n''| \le N}\sum_{n_3,n_{11}}\frac{1}{|n''|}\bigg[\Big(\sum_{\substack{n'\\|n_{11}-n'+n''-n_3| \neq 0}}\frac{1}{|n_{11}-n'+n''-n_3|^2}\Big)^{1/2}\\
&\hspace{11em}\times\Big(\sum_{n'}|\wh{h}(n_{11})\wh{\overline{g}g}(-n')\owh{h}(n_{11}-n'+n'')\wh{g}(n_3)\owh{f}(n_3-n'')|^2\Big)^{1/2}\bigg]\\
&\lesssim \sup_{N \ge 1}N^{-2-6s}\sum_{0 < |n''| \le N}\sum_{n_3}\frac{1}{|n''|}|\wh{g}(n_3)\owh{f}(n_3-n'')|\\
&\hspace{11em}\times\bigg[\Big(\sum_{n_{11}}|\wh{h}(n_{11})|^2\Big)^{1/2}\Big(\sum_{n_{11},n'}|\wh{\overline{g}g}(-n')\owh{h}(n_{11}-n'+n'')|^2\Big)^{1/2}\bigg]\\
&\lesssim \sup_{N \ge 1}N^{-2-6s}\norm{h}_{L^2}^2\norm{\overline{g}g}_{L^2}\bigg[\sum_{0 < |n''| \le N}\Big(\sum_{n_3}\frac{1}{|n''|}|\wh{g}(n_3)\owh{f}(n_3-n'')|\Big)^2\bigg]^{1/2}\\
&\lesssim \sup_{N \ge 1}N^{-2-6s}\norm{h}_{L^2}^2\norm{\overline{g}g}_{L^2}\norm{\ft_x^{-1}[|\wh{g}|]\ft_x^{-1}[|\wh{\overline{f}}|]}_{L^2}\\
&\lesssim \norm{g}_{L^4}^2\norm{\ft_x^{-1}[|\wh{g}|]}_{L^4}\norm{\ft_x^{-1}[|\wh{\overline{f}}|]}_{L^4}\norm{h}_{L^2}^2,
\end{aligned}
\end{equation}
whenever $-1/3 \le s < 0$. Hence $L^4$ Strichartz estimate (Lemma \ref{lem:L4}) and Sobolev embedding ($Y^{s,\frac12+} \hookrightarrow L^{\infty}H^s$) guarantee
\begin{equation}\label{eq:estimation-3}
II_2 \lesssim \norm{g}_{L_{t,x}^4}^2\norm{\ft_x^{-1}[|\wh{g}|]}_{L_{t,x}^4}\norm{\ft_x^{-1}[|\wh{\overline{f}}|]}_{L_{t,x}^4}\norm{h}_{L^{\infty}L^2}^2 \lesssim \norm{u}_{Y^{s,\frac12}}^6,
\end{equation}
by choosing $0 < \ep < 3/32$.

In view of \eqref{eq:estimation-1}, we can see that the same result follows \eqref{eq:estimation-1} without any modification, if one of $|\tau_{12} - \mu(n_{13})|$, $|\tau_{12} - \mu(n_{13})|$ and $|\tau_{3} - \mu(n_{3})|$ is $L_{max}$. Besides, one can get the same result by slightly modifying the change of variables, if one of the rest modulations is $L_{max}$. Indeed, if $|\tau_{11} - \mu(n_{11})| = L_{max}$, we change the variable $n'=n_{12} - n_{11}$ instead of $n'=n_{12} - n_{13}$. Then, $\wh{u}_{med}(n_{11})$ and $\wh{u}_{med}(n_{13})$ switch their roles in \eqref{eq:estimation-1}, and hence we \eqref{eq:estimation-3}. Lastly, if $|\tau_{2} - \mu(n_{2})| = L_{max}$, by using the change of variable $n'' = n_3-n_2 = n-n_1$ instead of $n''=n_3-n = n_2-n_1$, we can switch the roles of $\wh{u}_{med}(n_2)$ and $\wh{u}(n)$, and hence \eqref{eq:estimation-3} follows \eqref{eq:estimation-1}. We would like to note that not the smoothing effect of the maximum modulation, but the spare room of modulations between $L^4$ and $Y^{s,\frac12}$ allows us to use this argument. 

\begin{remark}\label{rem:threshold}
Once we choose $\ell^1$-norm in \eqref{eq:gap} instead of $\ell^{\infty}$-norm, $\sup_{N \ge 1}$ should be replaced by $\sum_{N \ge 1}$ in \eqref{eq:estimation-1}, which implies \eqref{eq:estimation-3} holds for $s > -1/3$. Hence it can be known that the choice of $\ell^{\infty}$-norm  in \eqref{eq:gap} prevents the logarithmic divergence at the end point regularity $s = -1/3$.

On the other hand, this non-resonant contribution of the time derivative is the worst in the sense that the smoothing effect breaks down for $s < -1/3$. The reason follows exactly Remark 3.2 in \cite{NTT2010}.
\end{remark}

Now we consider the cases B and C. Let denote $\max(|n_{11}|, |n_{12}|, |n_{13}|)$ by $n^*$. From \eqref{eq:resonant relation1} and \eqref{eq:modulation1}, we know that
\[L_{max} \gtrsim |n_{11} - n_{12}||n_{12}-n_{13}|(n^*)^2.\]

\textbf{Case B-1} It suffices to consider 
\begin{equation}\label{eq:B1}
\sup_{N \ge 1}\sum_{n \in I_N}\sum_{N_n}\frac{1}{H}P_N[\sum_{N_{n_1}}\wh{u}_{med}(n_{11})\owh{u}_{low}(n_{12})\wh{u}_{low}(n_{13})]\owh{u}_{med}(n_2)\wh{u}_{med}(n_3)\owh{u}(n).
\end{equation}
We assume that $|\tau - \mu(n)| = L_{max}$. Let us define
\[\wh{g}(n) = \bra{n}^s\wh{u}(n)\]
and
\[\wt{f}(n) = \bra{\tau-\mu(n)}^{\frac12}\bra{n}^s\wt{u}(\tau, n).\]
Since $\bra{n_{1,j}}^{-s} \lesssim N^{-s}$, $j=1,2,3$, $|H| \gtrsim |n_1-n_2||n_2-n_3|N^2$,
\[L_{max}^{-\frac12} \lesssim (|n_{11} - n_{12}||n_{12}-n_{13}|N^2)^{-\frac12}\]
and $0 < |n_{12} - n_{13}| \le N$, the similar argument as \eqref{eq:estimation-1} gives us that
\begin{equation}\label{eq:estimation3}
\begin{aligned}
\eqref{eq:B1}&\lesssim \sup_{N \ge 1}N^{-3-6s}\sum_{n \in I_N}\sum_{N_n}\frac{1}{|n_1-n_2||n_2-n_3|}\\
&\hspace{7em}\times P_N[\sum_{N_{n_1}}\frac{1}{|n_{12}-n_{13}|^{1/2}}\wh{g}(n_{11})\owh{g}(n_{12})\wh{g}(n_{13})]\owh{g}(n_2)\wh{g}(n_3)\owh{f}(n)\\
&\lesssim \sup_{N \ge 1}N^{-3-6s}\sum_{0 < |n''| \le N}\sum_{n_3}\frac{1}{|n''|}|\wh{g}(n_3)\owh{f}(n''-n_3)|\\\
&\hspace{7em}\times\bigg[\Big(\sum_{n_{11}}|\wh{g}(n_{11})|^2\Big)^{1/2}\Big(\sum_{\substack{n_{11}\\0< |n'| \le N}}\frac{1}{|n'|}|\wh{\overline{g}g}(-n')\owh{g}(n_{11}-n'+n'')|^2\Big)^{1/2}\bigg]\\
&\lesssim \sup_{N \ge 1}N^{-3-6s}(\log N)^{\frac32} \norm{f}_{L^2}\norm{g}_{L^2}^3\norm{\overline{g}g}_{L^1}\\
&\lesssim \norm{g}_{L^2}^5\norm{f}_{L^2},
\end{aligned}
\end{equation}
for $-1/2 < s < 0$. Hence, by Sobolev embedding, we have
\[II_2 \lesssim \norm{g}_{L_t^{10}L_x^2}^5\norm{f}_{L_{t,x}^2} \lesssim \norm{u}_{Y^{s,\frac12}}^6.\]
We remark in view of \eqref{eq:estimation3} that we can obtain the same result without any modification, when $|\tau - \mu(n)| \neq L_{max}$.

\textbf{Case B-2} For the integrand
\[\sup_{N \ge 1}\sum_{n \in I_N}\sum_{N_n}\frac{1}{H}P_N[\sum_{N_{n_1}}\wh{u}_{low}(n_{11})\owh{u}_{med}(n_{12})\wh{u}_{low}(n_{13})]\owh{u}_{med}(n_2)\wh{u}_{med}(n_3)\owh{u}(n),\]
the same argument in \textbf{Case B-1} can be directly applied, so we have the same result.

\textbf{Case B-3} We consider the following integrand:
\begin{equation}\label{eq:B3}
\sup_{N \ge 1}\sum_{n \in I_N}\sum_{N_n}\frac{1}{H}P_N[\sum_{N_{n_1}}\wh{u}_{med}(n_{11})\owh{u}_{high}(n_{12})\wh{u}_{high}(n_{13})]\owh{u}_{med}(n_2)\wh{u}_{med}(n_3)\owh{u}(n).
\end{equation}
Given $-1/2 < s < 0$, let $\ep := (3+6s)/12$. Since 
\[L_{max}^{-\frac12} \le |n_{11}-n_{12}|^{-\frac12 - \ep}|n_{12}-n_{13}|^{-\frac12 - \ep}(n^*)^{-1+2\ep}\]
and
\[\bra{n_{11}}^{-s}\bra{n_{12}}^{-s}\bra{n_{13}}^{-s}(n^*)^{-1+2\ep} \le N^{-1-3s+2\ep},\]
we have similarly as in \eqref{eq:estimation-1} that
\[\begin{aligned}
\eqref{eq:B3}&\lesssim \sup_{N \ge 1}N^{-3-6s + 2\ep}\sum_{n \in I_N}\sum_{N_n}\frac{1}{|n_1-n_2||n_2-n_3|}\\
&\hspace{7em}\times P_N[\sum_{N_{n_1}}\frac{1}{|n_{12}-n_{13}|^{1/2+\ep}}\wh{g}(n_{11})\owh{g}(n_{12})\wh{g}(n_{13})]\owh{g}(n_2)\wh{g}(n_3)\owh{f}(n)\\
&\lesssim \sup_{N \ge 1}N^{-3-6s+2\ep}\sum_{0 < |n''| \le N}\sum_{n_3}\frac{1}{|n''|}|\wh{g}(n_3)\owh{f}(n''-n_3)|\\\
&\hspace{7em}\times\bigg[\Big(\sum_{n_{11}}|\wh{g}(n_{11})|^2\Big)^{1/2}\Big(\sum_{\substack{n_{11}\\n' \neq 0}}\frac{1}{|n'|^{1+2\ep}}|\wh{\overline{g}g}(-n')\owh{g}(n_{11}-n'+n'')|^2\Big)^{1/2}\bigg]\\
&\lesssim \sup_{N \ge 1}N^{-3-6s+2\ep}(\log N)\norm{f}_{L^2}\norm{g}_{L^2}^3\norm{\overline{g}g}_{L^1}\\
&\lesssim \norm{g}_{L^2}^5\norm{f}_{L^2},
\end{aligned}\]
which implies
\[II_2 \lesssim \norm{g}_{L_t^{10}L_x^2}^5\norm{f}_{L_{t,x}^2} \lesssim \norm{u}_{Y^{s,\frac12}}^6.\]

The cases \textbf{B-4}, \textbf{B-5}, \textbf{B-6} can be treated exactly same as \textbf{Case B-3} Moreover, the cases \textbf{C-1} and \textbf{C-2} can be treated by the same argument as in \textbf{Case B-1}, so we omit the details.

\textbf{Case C-3} We consider the following integrand:
\begin{equation}\label{eq:C3}
\sup_{N \ge 1}\sum_{n \in I_N}\sum_{N_n}\frac{1}{H}P_N[\sum_{N_{n_1}}\wh{u}_{high}(n_{11})\owh{u}_{high}(n_{12})\wh{u}_{high}(n_{13})]\owh{u}_{med}(n_2)\wh{u}_{med}(n_3)\owh{u}(n).
\end{equation}
In this case, we know that
\[L_{max}^{-\frac12} \le |n_{11}-n_{12}|^{-\frac12 - \ep}|n_{12}-n_{13}|^{-\frac12 - \ep}(n^*)^{-1+2\ep} \hspace{1em} \mbox{and} \hspace{1em} |n_{11}-n_{12}|, |n_{12}-n_{13}| \sim n^* \gtrsim N,\]
for $-1/2 < s < 0$ and $\ep := (3+6s)/4$. Since 
\[\bra{n_{11}}^{-s}\bra{n_{12}}^{-s}\bra{n_{13}}^{-s}(n^*)^{-3/2+\ep} \le N^{-3/2-3s+2\ep},\]
similarly as \textbf{Case B-3}, we have
\[\begin{aligned}
\eqref{eq:C3}&\lesssim \sup_{N \ge 1}N^{-7/2-6s + \ep}\sum_{n \in I_N}\sum_{N_n}\frac{1}{|n_1-n_2||n_2-n_3|}\\
&\hspace{7em}\times P_N[\sum_{N_{n_1}}\frac{1}{|n_{12}-n_{13}|^{1/2+\ep}}\wh{g}(n_{11})\owh{g}(n_{12})\wh{g}(n_{13})]\owh{g}(n_2)\wh{g}(n_3)\owh{f}(n)\\
&\lesssim \sup_{N \ge 1}N^{-7/2-6s + \ep}\sum_{0 < |n''| \le N}\sum_{n_3}\frac{1}{|n''|}|\wh{g}(n_3)\owh{f}(n''-n_3)|\\\
&\hspace{7em}\times\bigg[\Big(\sum_{n_{11}}|\wh{g}(n_{11})|^2\Big)^{1/2}\Big(\sum_{\substack{n_{11}\\|n'| \gtrsim N}}\frac{1}{|n'|^{1+2\ep}}|\wh{\overline{g}g}(-n')\owh{g}(n_{11}-n'+n'')|^2\Big)^{1/2}\bigg]\\
&\lesssim \sup_{N \ge 1}N^{-7/2-6s}(\log N)\norm{f}_{L^2}\norm{g}_{L^2}^3\norm{\overline{g}g}_{L^1}\\
&\lesssim \norm{g}_{L^2}^5\norm{f}_{L^2},
\end{aligned}\]
which implies
\[II_2 \lesssim \norm{g}_{L_t^{10}L_x^2}^5\norm{f}_{L_{t,x}^2} \lesssim \norm{u}_{Y^{s,\frac12}}^6.\]

Therefore, we have
\[\eqref{eq:hhhh0} \lesssim \norm{u_0}_{H^s}^4 + \big(\norm{u_0}_{H^{s}}^2 + \norm{u}_{Y_T^{s,\frac12}}^4\big)^2 + \norm{u}_{Y^{s,\frac12}}^6\]
for $-1/3 \le s <0$.

\textbf{Case II.} We deal with the interactions of two high frequencies and two low frequencies.

\textbf{Case II-a.} (\emph{high $\times$ low $\times$ low $\Rightarrow$ high}\footnote{\emph{low $\times$ low $\times$ high $\Rightarrow$ high} case can be estimated by the same argument due to the symmetry.}) It suffices to estimate
\begin{equation}\label{eq:hl0}
\sup_{N\ge1}\sum_{n \in I_N} \Bigg|\mbox{Im}\Bigg[ \int_0^t  \sum_{\N_n} \wh{u}_{med}(s,n_1)\owh{u}_{low}(s,n_2)\wh{u}_{low}(s,n_3)\owh{u}(s,n) \; ds \Bigg]\Bigg|.
\end{equation}
From \eqref{eq:resonant relation} in addition to Remark \ref{rem:negligible factor}, we know
\begin{equation}\label{eq:hl2}
|G| \gtrsim |n_2-n_3|n^3.
\end{equation}
We first assume $|\tau - \mu(n)| \gtrsim |n_2-n_3| n^3$. Given $-3/8 < s < 0$, we choose $\ep := (3/2 + 4s)/8 > 0$. Since 
\[\bra{\tau - \mu(n)}^{-\frac12} \le |G|^{-\frac12 + \ep} \bra{\tau_1 - \mu(n_1)}^{-\ep} \lesssim |n_2-n_3|^{1/2}|n|^{-\frac32 + 4\ep} \bra{\tau_1 - \mu(n_1)}^{-\ep},\]
for $\wt{f}_1(\tau_1,n_1) = \bra{\tau_1 - \mu(n_1)}^{-\epsilon}\bra{n_1}^s|\wt{u}_{med}(\tau_1,n_1)|$, $\wh{f}_2(n_2) = \bra{n_2}^s|\wh{u}_{low}(n_2)|$, $\wt{f}_3(\tau_3,n_3) = \bra{n_3}^s|\wt{u}_{low}(\tau_3,n_3)|$ and $\wt{g}(\tau,n) = \bra{\tau - \mu(n)}^{1/2}\bra{n}^s|\wt{u}(\tau,n)|$, we have 
\begin{equation}\label{eq:low result}
\begin{aligned}
\eqref{eq:hl0} &\lesssim \sup_{N\ge1}N^{-(3/2 + 4s -4\ep)}\int_0^t \sum_{n\in I_N,\N_n} \frac{1}{|n_2-n_3|^{1/2}}\wh{f}_1(n_1)\owh{f}_2(n_2)\wh{f}_3(n_3)\owh{g}(n) \; ds\\
&\le \sup_{N\ge1}N^{-(3/2 + 4s -4\ep)}\sum_{0 < |n'| \le N}  \frac{1}{|n'|^{1/2}} \wh{f_1 \overline{g}}(n')\wh{ \overline{f}_2f_3}(-n') \; ds\\
&\lesssim \sup_{N\ge1}N^{-(3/2+4s-4\ep)}\log N\norm{f_2}_{L_{t,x}^4}\norm{f_3}_{L_{t,x}^4}\norm{f_1}_{L_t^{\infty}L^2}\norm{g}_{L_{t,x}^2}\\
&\lesssim \norm{u}_{Y^{s,\frac12}}^4.
\end{aligned}
\end{equation}
In view of \eqref{eq:low result}, we can see that \eqref{eq:low result} is not affected by the choice of the maximum modulation, and hence the assumption $|\tau - \mu(n)| \gtrsim |n_2-n_3| n^3$ is enough without loss of generality.

\textbf{Case II-b.} (\emph{low $\times$ high $\times$ low $\Rightarrow$ high}) The estimation of \eqref{eq:hl0} for the integrand 
\[\wh{u}_{low}(s,n_1)\owh{u}_{med}(s,n_2)\wh{u}_{low}(s,n_3)\owh{u}(s,n)\] 
also follows \eqref{eq:low result}.

\textbf{Case II-c.} (\emph{low $\times$ high $\times$ high $\Rightarrow$ low}\footnote{\emph{high $\times$ high $\times$ low $\Rightarrow$ low} case can be estimated by the same argument due to the symmetry.}) In this case, we need to consider the following integrands
\[\wh{u}_{low}(s,n_1)\owh{u}_{high}(s,n_2)\wh{u}_{high}(s,n_3)\owh{u}(s,n)\] 
and
\[\wh{u}_{med}(s,n_1)\owh{u}_{high}(s,n_2)\wh{u}_{high}(s,n_3)\owh{u}(s,n).\] 
Since 
\[\bra{\tau - \mu(n)}^{-\frac12} \le |G|^{-\frac12 + \ep} \bra{\tau_1 - \mu(n_1)}^{-\ep} \lesssim |n_2-n_3|^{1/2}|n_2|^{-\frac32 + 4\ep} \bra{\tau_1 - \mu(n_1)}^{-\ep}\]
and 
\[|n_2|^{-\frac32 -4s + 4\ep} \lesssim N^{-\frac32 -4s + 4\ep}\]
for $-3/8 < s < 0$, \eqref{eq:low result} for both integrands still holds.

\textbf{Case II-d.} (\emph{high $\times$ low $\times$ high $\Rightarrow$ low}) The estimation of \eqref{eq:hl0} for integrands 
\[\wh{u}_{high}(s,n_1)\owh{u}_{low}(s,n_2)\wh{u}_{high}(s,n_3)\owh{u}(s,n)\] 
and
\[\wh{u}_{high}(s,n_1)\owh{u}_{med}(s,n_2)\wh{u}_{high}(s,n_3)\owh{u}(s,n)\] 
can be obtained by similar way as in \eqref{eq:low result}, since 
\[\bra{\tau - \mu(n)}^{-\frac12} \le |G|^{-\frac12 + \ep} \bra{\tau_1 - \mu(n_1)}^{-\ep} \lesssim |n_2-n_3|^{1/2}|n_3|^{-\frac32 + 4\ep} \bra{\tau_1 - \mu(n_1)}^{-\ep}\]
and 
\[|n_3|^{-\frac32 -4s + 4\ep} \lesssim N^{-\frac32 -4s + 4\ep}\]
for $-3/8 < s < 0$.

\textbf{Case III.} (\emph{high $\times$ high $\times$ low $\Rightarrow$ high}\footnote{\emph{low $\times$ high $\times$ high $\Rightarrow$ high} case can be estimated by the same argument due to the symmetry. Moreover, we can also treat \emph{high $\times$ high $\times$ high $\Rightarrow$ low} case due to the same reason in \textbf{Case II-c} and \textbf{Case II-d}.}) Under this frequency relation, since one can always get the condition \eqref{eq:hl2}, we hence obtain the same result as in \textbf{Case II}. 

By gathering the results in \textbf{Case I, II} and \textbf{III}, we can complete the proof of \eqref{eq:main}.  
\end{proof}

As an immediate result, we have the following corollary:

\begin{corollary}\label{cor:main}
Let $-1/3 \le s < 0$, $0 < T \le 1$, $t \in [-T,T]$ and $u_0 \in C^{\infty}(\T)$. Suppose that $u$ is a complex-valued smooth solution to \eqref{eq:WNLS4} on $[-T,T]$ and $u \in Y_T^{s,1/2}$. Then the following estimate holds:
\[\sup_{n \in \Z} \Big||\wh{u}(t,n)|^2-|\wh{u}_0(n)|^2\Big|\lesssim \norm{u_0}_{H^s}^4 + \big(\norm{u_0}_{H^{s}}^2 + \norm{u}_{Y_T^{s,\frac12}}^4\big)^2 +  \norm{u}_{Y_T^{s,\frac12}}^4 + \norm{u}_{Y_T^{s,\frac12}}^6.\]
\end{corollary}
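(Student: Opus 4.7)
The plan is to observe that Corollary \ref{cor:main} is an essentially immediate consequence of Proposition \ref{prop:main} combined with the pointwise-in-frequency energy identity already derived in \eqref{eq:energy}. No new estimate is needed; only a single time integration and a supremum in $n$.

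First I would pair the equation \eqref{eq:WNLS4} with $\owh{u}(n)$ and take real parts. The linear symbol $i\mu(n)\wh u(n)$ and the diagonal term $i(|\wh u(n)|^2 - |\wh u_0(n)|^2)\wh u(n)$ are both purely imaginary multiples of $|\wh u(n)|^2$, hence disappear under $2\,\mbox{Re}[\owh u(n)\,\cdot\,]$. Therefore
\[
\pt |\wh{u}(t,n)|^2 = -2\,\mbox{Im}\Bigl[\sum_{\N_n} \wh{u}(t,n_1)\owh{u}(t,n_2)\wh{u}(t,n_3)\owh{u}(t,n)\Bigr],
\]
which is precisely \eqref{eq:energy} and contains \emph{no} dependence on $|\wh u_0(n)|^2$ after cancellation.

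Next, I would integrate this identity from $0$ to $t$ and take the absolute value, yielding
\[
\bigl||\wh u(t,n)|^2 - |\wh u_0(n)|^2\bigr| \;=\; 2\Bigl|\,\mbox{Im}\Bigl[\int_0^t \sum_{\N_n}\wh{u}(s,n_1)\owh{u}(s,n_2)\wh{u}(s,n_3)\owh{u}(s,n)\,ds\Bigr]\Bigr|.
\]
Taking the supremum over $n \in \Z$ on both sides and invoking Proposition \ref{prop:main} termwise (with its estimate uniform in $n$ since the supremum is already absorbed into its left-hand side) gives the desired bound
\[
\sup_{n \in \Z} \bigl||\wh{u}(t,n)|^2-|\wh{u}_0(n)|^2\bigr|\;\lesssim\; \norm{u_0}_{H^s}^4 + \bigl(\norm{u_0}_{H^{s}}^2 + \norm{u}_{Y_T^{s,\frac12}}^4\bigr)^2 +  \norm{u}_{Y_T^{s,\frac12}}^4 + \norm{u}_{Y_T^{s,\frac12}}^6.
\]

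There is no serious obstacle: the whole point of Proposition \ref{prop:main} was precisely to furnish this $\ell^\infty_n$ control on the relevant time integral, and the only ``work'' here is recording the algebraic cancellation that turns $\pt|\wh u(n)|^2$ into a pure non-resonant trilinear expression. The choice of $\ell^\infty_n$ (rather than $\ell^1_n$) at this step is exactly what later enables the endpoint regularity $s=-1/3$, as remarked in Remark \ref{rem:threshold}.
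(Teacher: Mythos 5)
Your proposal is correct and coincides with the paper's own (implicit) argument: the paper presents Corollary \ref{cor:main} as an immediate consequence of Proposition \ref{prop:main}, using exactly the identity $\pt|\wh{u}(t,n)|^2 = -2\,\mbox{Im}\big[\sum_{\N_n}\wh{u}(n_1)\owh{u}(n_2)\wh{u}(n_3)\owh{u}(n)\big]$ already recorded in \eqref{eq:energy}, integrated in time and followed by the supremum in $n$. Nothing further is needed.
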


Going back to \eqref{eq:linear}, we have from Proposition \ref{prop:nonres-trilinear} and Corollary \ref{cor:main} that\footnote{In view of the proof of Proposition \ref{prop:nonres-trilinear} and \eqref{eq:I}, we can extract $T^{\theta}$ from each estimate.}
\begin{equation}\label{eq:a priori 0}
\begin{aligned}
\norm{u}_{Y_{T}^{s,\frac12}} \le C\Bigg[\norm{u_{0}}_{H^s} +  T^{\gamma}\Big\{\norm{u_{0}}_{H^s}^4 +& \norm{u}_{Y_{T}^{s,\frac12}}^2 + \big(\norm{u_{0}}_{H^{s}}^2 + \norm{u}_{Y_{T}^{s,\frac12}}^4\big)^2\\
&+ \norm{u}_{Y_{T}^{s,\frac12}}^4 + \norm{u}_{Y_{T}^{s,\frac12}}^6\Big\}\norm{u}_{Y_{T}^{s,\frac12}}\Bigg]
\end{aligned}
\end{equation}
for $-\frac13 \le s < 0$.

We fix $-\frac13 \le s < 0$ and let $T$ be a positive constant with $T \le 1$ to be determined later. For given $u_0 \in H^s(\T)$, we consider a sequence $\set{u_{0,j}}$ of $C^{\infty}(\T)$ functions such that $u_{0,j} \to u_0$ in $H^s$ as $j \to \infty$. If we regard $u_{0,j}$ as initial data, we have global smooth solutions to \eqref{eq:WNLS1}. We denote these solutions by $u_j$. Then, $u_j$ satisfy the following integral equation
\[\begin{aligned}
\wh{u}_j(t,n) &= e^{it\mu_j(n)}\wh{u}_{0,j}(n)\\
&+i\int_0^te^{i(t-s)\mu_j(n)}(|\wh{u}_j(s,n)|^2-|\wh{u}_{0,j}(n)|^2)\wh{u}_j(s,n) \; ds\\
&-i\int_0^te^{i(t-s)\mu_j(n)}\sum_{\N_n}\wh{u}_j(n_1)\ol{\wh{u}_j}(n_2)\wh{u}_j(n_3) \; ds\\
\end{aligned}\]
where $\mu_j(n) = n^4 + |\wh{u}_{0,j}(n)|^2$. We denote by $\norm{\cdot}_{Y_{T,j}^{s,b}}$ the $Y_T^{s,b}$ norm associated to $\mu_j$. 

From \eqref{eq:a priori 0}, we also obtain for $u_j$ that

\begin{equation}\label{eq:a priori 1}
\begin{aligned}
\norm{u_j}_{Y_{T,j}^{s,\frac12}} \le C\Bigg[\norm{u_{0,j}}_{H^s} +  T^{\gamma}\Big\{\norm{u_{0,j}}_{H^s}^4 +& \norm{u_j}_{Y_{T,j}^{s,\frac12}}^2 + \big(\norm{u_{0,j}}_{H^{s}}^2 + \norm{u_j}_{Y_{T,j}^{s,\frac12}}^4\big)^2\\
&+ \norm{u_j}_{Y_{T,j}^{s,\frac12}}^4 + \norm{u_j}_{Y_{T,j}^{s,\frac12}}^6\Big\}\norm{u_j}_{Y_{T,j}^{s,\frac12}}\Bigg].
\end{aligned}
\end{equation}

Since $u_{0,j} \to u_0$ in $H^s$ as $j \to \infty$, we choose $K>0$ such that 
\begin{equation}\label{eq:initial}
\norm{u_{0,j}}_{H^s}, \norm{u_0}_{H^s} \le K \hspace{1em} \mbox{for all} \hspace{0.5em}j \ge 1.
\end{equation} 
Let
\[X_j(T) = \norm{u_j}_{Y_{T,j}^{s,\frac12}} (T>0), \hspace{1em} X_j(0):= \lim_{t \to 0^+}X_j(t).\]
Then, from \eqref{eq:a priori 1}, we have
\[X_j(T) \le C\Bigg[K +  T^{\gamma}\Big\{K^4 + X_j(T)^2 + \big(K^2 + X_j(T)^4\big)^2+ X_j(T)^4 + X_j(T)^6\Big\}X_j(T)\Bigg]\]
for all $j \ge 1$. Since $X_j(t)$ is continuous with respect to $t$ for smooth solutions $u_j$, by the continuity argument, we can choose $0 < T \ll 1$ such that
\begin{equation}\label{eq:a priori 2}
X_j(t) \le L 
\end{equation}
for some $L=L(K)>0$ and for $0 \le t \le T$. In view of the procedure, we can know that the choice of $T$ is independent on $j$, but dependent on $s, K$.

Now we define the projection operator $P_{\le k}$ for a positive integer $k$ by
\[P_{\le k}f = \frac{1}{\sqrt{2\pi}}\sum_{|n| \le k} \wh{f}(n)e^{inx}.\]
Let $u_{j,k} = P_{\le k}u_j$. Then $u_{j,k}$ satisfies
\[\pt \wh{u}_{j,k}(n) -i\mu_j(n) \wh{u}_{j,k}(n) = \sum_{|n| \le k}\left[i  (|\wh{u}_j(n)|^2 - |\wh{u}_{0,j}(n)|^2)\wh{u}_j(n) -i \sum_{\N_n}\wh{u}_j(n_1)\owh{u}_j(n_2)\wh{u}_j(n_3)\right]\]
with the initial data $u_{j,k}(0) = P_{\le k}u_{0,j}$.

From \eqref{eq:a priori 2}, we have 
\begin{equation}\label{eq:a priori 3}
\norm{u_{j,k}}_{Y_{T,j}^{s,\frac12}} \le \norm{u_j}_{Y_{T,j}^{s,\frac12}} \le L
\end{equation}
for all $j,k \ge 1$. 

Let $\epsilon>0$ be given. In view of the proof of  Proposition \ref{prop:main} in addition to \eqref{eq:initial} and \eqref{eq:a priori 3}, we know
\[\sum_{n \in I_N}\Big||\wh{u}_j(n)|^2-|\wh{u}_{0,j}(n)|^2\Big| \lesssim C(K,L)N^{-\delta}\]
for some $\delta \ge 0$, and where $I_N$ is defined as in \eqref{eq:I_N} for all $N \in 2^{\Z_{\ge 0}}$. Hence we obtain
\begin{equation}\label{eq:a priori 4}
\begin{aligned}
\norm{(I-P_{\le k})u_j}_{C_TH^s}^2 &\lesssim \sum_{|n| \ge k}\bra{n}^{2s}|\wh{u}_{0,j}|^2 + \sum_{N > k}N^{2s}\sum_{n \in I_N}\Big||\wh{u}_j(n)|^2-|\wh{u}_{0,j}(n)|^2\Big| \\
&\le C( \norm{(I-P_{\le k})u_{0,j}}_{H^s}^2 + k^{2s}C(K,L)),
\end{aligned}
\end{equation}
where $I$ is the identity operator. On the other hand, since $u_{0,j} \to u_0$ in $H^s$ as $j \to \infty$, there exists $N_0 > 0$ such that $\norm{u_0 - u_{0,j}}_{H^s} < \epsilon/(4C)$ holds, whenever $j > N_0$. We fix $N_0 > 0$. For each $1 \le j \le N_0$, there exist $M_j > 0$, $j=1,\cdots, N_0$ such that $\norm{(I-P_{\le k})u_{0,j}}_{H^s} < \epsilon/(2C)$ holds for $k > M_j$, $1 \le j \le N_0$. Moreover, there exists $M_0>0$ such that  $k > M_0$ implies $\norm{(I-P_{\le k})u_{0}}_{H^s} < \epsilon/(4C)$ and $k^{2s}C(K,L) < \epsilon/(2C)$. Since
\[\norm{(I-P_{\le k})u_{0,j}}_{H^s} \le \norm{u_{0,j}-u_0}_{H^s} + \norm{(I-P_{\le k})u_{0}}_{H^s},\]
we can choose $M := \max(M_0, M_j:1 \le j \le N_0)$ such that $k > M$ implies $\norm{(I-P_{\le k})u_{0,j}}_{H^s} \to 0$ for all $j \ge 1$. Thus, from \eqref{eq:a priori 4}, we have
\begin{equation}\label{eq:tightness}
\norm{(I-P_{\le k})u_j}_{H^s} < \epsilon
\end{equation}
for $k > M$ and all $j \ge 1$.

Besides, Arzel\`a-Ascoli's theorem and the diagonal argument yield that for each $N\ge1$, there exists a subsequence $\set{u_{j',j'}} \subset \set{u_{j,k}}$ (we denote $u_{j',j'}$ by $u_{j'}$) such that $\set{P_{\le N}u_{j'}}$ is a convergent sequence in $C([0,T];H^s(\T))$, i.e.
\begin{equation}\label{eq:compactness}
\norm{P_{\le N}(u_{j'} - u_{k'})}_{C([-T,T];H^s)} \to 0, \hspace{1em} j',k' \to \infty.
\end{equation}
Thus, by the $3\epsilon$-argument with \eqref{eq:tightness} and \eqref{eq:compactness}, there exists a solution $u$ to \eqref{eq:WNLS4} on $[-T,T]$ satisfying
\begin{equation}\label{eq:solution class}
u \in C([-T,T];H^s) \cap Y_T^{s,\frac12}, \hspace{1em} \norm{u}_{Y_T^{s,\frac12}} \le L, \hspace{1em} \norm{u_{j'}-u}_{C([-T,T];H^s)} \to 0 \hspace{0.3em}(j' \to \infty).
\end{equation}

\subsection{Uniqueness}
Now we consider the difference of two solutions to \eqref{eq:WNLS4} for the uniqueness part. Let $u$ and $v$ be solutions to \eqref{eq:WNLS4} with same initial data, i.e. $u_0 = v_0$, and let $w = u-v$ with $w(0) = 0$. Then, $w$ satisfies
\[\begin{aligned}
\pt\wh{w}(n) + i\mu(n)\wh{w}(n)&=i(|\wh{u}(n)|^2-|\wh{u}_0(n)^2|)\wh{w}(n) + i  (|\wh{u}(n)|^2 - |\wh{v}(n)|^2)\wh{v}(n)\\ 
&-i \sum_{\N_n}\big[\wh{w}(n_1)\owh{u}(n_2)\wh{u}(n_3)+\wh{v}(n_1)\owh{w}(n_2)\wh{u}(n_3)+\wh{v}(n_1)\owh{v}(n_2)\wh{w}(n_3)\big]\\
&=:\wh{I(u,w)}(n)+\wh{II(u,v)}(n)+\wh{III(u,v,w)}(n),
\end{aligned}\]
where $\mu(n) = n^4 + |\wh{u}_0(n)|^2$. Then, the standard $X^{s,b}$ analysis gives
\[\norm{w}_{Y^{s,1/2}_T} \lesssim \norm{I(u,w)}_{L_T^2H^s}+\norm{II(u,v)}_{L_T^2H^s}+\norm{III(u,v,w)}_{Y_T^{s,-1/2+\epsilon}}\]
for $\epsilon>0$. For $I(u,w)$ and $III(u,v,w)$, we can use the same argument as in the existence part, in particular, the estimates of $I(u)$ and $II(u)$ in the right-hand side of \eqref{eq:linear}, to obtain
\begin{equation}\label{eq:nonlinear3}
\begin{aligned}
\norm{I(u,w)}_{L_T^2H^s} &\lesssim T^{\alpha}\Big(\sup_{t \in [-T,T]}\sup_{n \in \Z}\big||\wh{u}(t,n)|^2 - |\wh{u}_0(n)|^2 \big|\Big)\norm{w}_{Y_T^{s,\frac12}}\\
&\lesssim T^{\alpha}\Big(\norm{u_0}_{H^s}^4 + \big(\norm{u_0}_{H^{s}}^2 + \norm{u}_{Y_T^{s,\frac12}}^4\big)^2 +  \norm{u}_{Y_T^{s,\frac12}}^4 + \norm{u}_{Y_T^{s,\frac12}}^6 \Big)\norm{w}_{Y_T^{s,\frac12}}\\
&\lesssim T^{\alpha}C_1(K,L)\norm{w}_{Y_T^{s,\frac12}}
\end{aligned}
\end{equation}
for some $\alpha > 0$, and
\begin{equation}\label{eq:nonlinear4}
\begin{aligned}
\norm{III(u,v,w)}_{Y_{T}^{s,-\frac12+\delta}} &\lesssim T^{\gamma}\big(\norm{u}_{Y_T^{s,\frac12}}^2 + \norm{u}_{Y_T^{s,\frac12}}\norm{v}_{Y_T^{s,\frac12}} + \norm{v}_{Y_T^{s,\frac12}}^2 \big)\norm{w}_{Y_T^{s,\frac12}}\\
&\lesssim T^{\gamma}C_3(L)\norm{w}_{Y_T^{s,\frac12}}
\end{aligned}
\end{equation}
for some $\gamma>0$, respectively, under the conditions \eqref{eq:initial} and \eqref{eq:solution class}.

For the term $II(u,v)$, similarly as the estimate of $I$ in the right-hand side of \eqref{eq:linear}, we have
\begin{equation}\label{eq:nonlinear5}
\norm{II(u,v)}_{L_T^2H^s} \lesssim T^{\beta}\Big(\sup_{t \in [-T,T]}\sup_{n \in \Z}\big||\wh{u}(t,n)|^2 - |\wh{v}(t,n)|^2 \big|\Big)\norm{v}_{Y_{T,j}^{s,\frac12}}
\end{equation}
for some $\beta>0$. As seen in \eqref{eq:nonlinear5}, the most important point of the uniqueness is how to estimate 
\begin{equation}\label{eq:diff}
\sup_{n \in \Z}\Big||\wh{u}(n)|^2 - |\wh{v}(n)|^2\Big|
\end{equation}
in $Y^{s,\frac12}$ space. In fact, since the symmetry of equation is broken in \eqref{eq:diff}, we cannot directly apply Proposition \ref{prop:main} to the difference of two solutions. However, thanks to the highly non-resonant effect from nonlinear interactions, we can overcome the lack of the symmetry in the uniqueness part. The following Proposition provides a rigorous solution to this issue:
\begin{proposition}\label{prop:main2}
Let $-1/3 \le s < 0$, $0 < T \le 1$, $t \in [-T,T]$ and $u_0 \in C^{\infty}(\T)$. Suppose that $u$ and $v$ are complex-valued smooth solutions to \eqref{eq:WNLS4} on $[-T,T]$ with $u_0 = v_0$ and $u, v \in Y_T^{s,1/2}$. Then the following estimate holds:
\begin{equation}\label{eq:main2}
\begin{aligned}
\sup_{n \in \Z} \Bigg|\mbox{Im}\Bigg[ \int_0^t  \sum_{\N_n} &\wh{v}(n_1)\owh{w}(n_2)\wh{u}(n_3)\owh{u}(n)\; ds \Bigg]\Bigg| \le C(\norm{u_0}_{H^s}, \norm{v_0}_{H^s}, \norm{u}_{Y_T^{s,\frac12}}, \norm{v}_{Y_T^{s,\frac12}}) \norm{w}_{Y_T^{s,\frac12}}.
\end{aligned}
\end{equation}
\end{proposition}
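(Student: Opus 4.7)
The plan is to mirror the proof of Proposition \ref{prop:main}, treating $w = u - v$ as an additional ingredient. I first perform the same dyadic decomposition based on the output frequency $|n|\sim N$, writing each of $u,v,w$ as the sum of its low, medium, and high-frequency parts relative to $N$, and splitting into the cases high $\times$ high $\times$ high $\Rightarrow$ high/low, high $\times$ low $\times$ low $\Rightarrow$ high, and their symmetric variants. The crucial new feature is that in the integrand $\wh{v}(n_1)\owh{w}(n_2)\wh{u}(n_3)\owh{u}(n)$ the factor $w$ appears exactly once, so by the embedding $Y_T^{s,\frac12}\hookrightarrow C([-T,T];H^s)$ its contribution can be absorbed into a single copy of $\norm{w}_{Y_T^{s,\frac12}}$, which is precisely what the right-hand side of \eqref{eq:main2} permits.

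In the hardest high $\times$ high $\times$ high $\Rightarrow$ high regime I would integrate by parts in $s$ using the resonance function $H$ of \eqref{eq:resonant function}, producing boundary contributions at $s=0$ and $s=t$ together with a remainder in which $\partial_s$ is distributed onto one of the four factors via its evolution equation. The boundary term at $s=0$ vanishes since $w(0)=0$ by the hypothesis $u_0=v_0$; the boundary term at $s=t$ is handled by the same $\ell^\infty\times\ell^\infty$ argument as in \eqref{eq:nonresonant estimate}, except that rather than invoke the a priori $H^s$ identity \eqref{eq:trilinear1}, I simply control the single $w$ factor by $\norm{w(t)}_{H^s}\lesssim\norm{w}_{Y_T^{s,\frac12}}$ and the remaining $u,v$ factors by the same embedding combined with \eqref{eq:trilinear1} applied to $u$ and $v$ separately. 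This is the trick flagged in Remark \ref{rem:nontrivial}: the boundary contribution is linear in $w$ and therefore does not require any energy identity for the difference.

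For the interior remainder, substituting the appropriate equation for whichever factor $\partial_s$ hits — the equation for $w$ built from $\wh{I(u,w)}+\wh{II(u,v)}+\wh{III(u,v,w)}$ when it hits $\wh{w}$, and the Wick-ordered equation \eqref{eq:WNLS4} when it hits $\wh{u}$ or $\wh{v}$ — produces quintic and sextic forms in $\{u,v,w\}$ analogous to Cases A, B, C of Proposition \ref{prop:main}. I expect the main obstacle to be the Case A sextic contribution, because the integrand $\wh{v}\,\owh{w}\,\wh{u}\,\owh{u}$ no longer enjoys the fourfold symmetry of $|\wh{u}|^2|\wh{u}|^2$ that was exploited in the change of variables of \eqref{eq:estimation-1}. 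To resolve this I would invoke Lemma \ref{lem:sym} to recast the non-resonant six-linear form into a shape amenable to the same $n'$, $n''$ substitution performed in \eqref{eq:estimation-1}; after this reduction the $L^4$ Strichartz estimate (Lemma \ref{lem:L4}) together with the Sobolev embedding of Lemma \ref{lem:sobolev} delivers a bound proportional to $\norm{u}_{Y_T^{s,\frac12}}^a\norm{v}_{Y_T^{s,\frac12}}^b\norm{w}_{Y_T^{s,\frac12}}$ with $a+b=5$, again linear in $w$.

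Finally, the remaining configurations — high $\times$ low $\times$ low $\Rightarrow$ high and all the mixed high-low patterns of Cases II and III in Proposition \ref{prop:main} — carry over essentially unchanged, since in each such case at least one factor is low-frequency and the $N^{-\delta}$ gain exploited in \eqref{eq:low result} still closes the supremum over $N$ no matter which of the four slots carries $w$. Combining the boundary term, the sextic remainder of Case A, and the mixed frequency contributions yields \eqref{eq:main2} with a constant of the form $C(\norm{u_0}_{H^s},\norm{v_0}_{H^s},\norm{u}_{Y_T^{s,\frac12}},\norm{v}_{Y_T^{s,\frac12}})$, as claimed.
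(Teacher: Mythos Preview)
There is a genuine gap in your treatment of the boundary term $\wt I$ coming from integration by parts. You write that you will ``simply control the single $w$ factor by $\norm{w(t)}_{H^s}\lesssim\norm{w}_{Y_T^{s,\frac12}}$'' via the embedding $Y_T^{s,\frac12}\hookrightarrow C_tH^s$. But this embedding fails at the endpoint $b=\tfrac12$; the paper only has $Y^{s,b}\hookrightarrow C_tH^s$ for $b>\tfrac12$. In the proof of Proposition~\ref{prop:main} the pointwise bound $\norm{u(t)}_{H^s}$ was obtained not by embedding but by the energy identity of Corollary~\ref{cor:trilinear}, which relies on the cancellation in $\partial_t|\wh u(n)|^2$. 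For the difference $w=u-v$ this cancellation breaks down: the resonant contribution to $\partial_t|\wh w(n)|^2$ is $2\,\mbox{Im}[\wh v(n)\owh w(n)\wh u(n)\owh w(n)]$, and summing $\bra{n}^{2s}$ against this forces control of $\norm{w}_{L^2}$, which is unavailable for $s<0$. This is exactly the obstruction flagged in Remark~\ref{rem:nontrivial}, and it is not resolved by linearity in $w$.

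The paper's actual trick is to sacrifice regularity on the $w$ slot in the boundary term: instead of distributing the weight $\bra{n_1n_2n_3n}^{-s}$ symmetrically, one puts $\bra{n_2}^{-1+\epsilon}$ on $w$ and $\bra{\cdot}^{-1/3}$ on the three remaining factors, so that the boundary term is bounded by $\norm{u(t)}_{H^{-1/3}}^2\norm{v(t)}_{H^{-1/3}}\norm{w(t)}_{H^{-2/3}}$. The point of Lemma~\ref{lem:sym} is precisely to supply the missing bound $\norm{w(t)}_{H^{-2/3}}\lesssim C\norm{w}_{Y_T^{s,\frac12}}$ at this lower regularity, where the resonant term \emph{can} be absorbed. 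You have misplaced the role of Lemma~\ref{lem:sym}: it is used for the boundary term $\wt I$, not for the Case~A interior sextic term $\wt{II}$, which in fact goes through exactly as in Proposition~\ref{prop:main} once one substitutes the equation for whichever factor is hit by $\partial_s$.
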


Before proving Proposition \ref{prop:main2}, we first show the following Lemma, which is the key supplement to cover the lack of the symmetry:

\begin{lemma}\label{lem:sym}
Let $-1/3 \le s < 0$, $0 < T \le 1$, $t \in [-T,T]$ and $u_0 \in C^{\infty}(\T)$. Suppose that $u$ and $v$ are complex-valued smooth solutions to \eqref{eq:WNLS4} on $[-T,T]$ with $u_0 = v_0$ and $u, v \in Y_T^{s,1/2}$. Let $w = u-v$. Then the following estimate holds:
\begin{equation}\label{eq:sym-1}
\norm{w(t)}_{H^{-\frac23}}^2 \lesssim (\norm{u}_{Y_T^{s,\frac12}}^2 + 2\norm{u}_{Y_T^{s,\frac12}}\norm{v}_{Y_T^{s,\frac12}} + \norm{v}_{Y_T^{s,\frac12}}^2)\norm{w}_{Y_T^{s,\frac12}}^2.
\end{equation}
\end{lemma}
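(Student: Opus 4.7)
The plan is an energy estimate at the weaker regularity $H^{-2/3}$. Since $w(0)=0$, differentiation under the $H^{-2/3}$ norm gives
\[
\norm{w(t)}_{H^{-2/3}}^2 \;=\; 2\int_0^t\sum_{n\in\Z}\bra{n}^{-4/3}\mbox{Re}\bigl[\overline{\wh{w}(n)}\,\pt\wh{w}(n)\bigr]\,ds.
\]
Using the $w$-equation $\pt\wh{w} = -i\mu\wh{w}+I(u,w)+II(u,v)+III(u,v,w)$, both $-i\mu(n)\wh{w}(n)$ and $I(u,w)(n)=i(|\wh{u}(n)|^2-|\wh{u}_0(n)|^2)\wh{w}(n)$ are purely imaginary multiples of $\wh{w}(n)$ and therefore drop out of the energy identity. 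Only the $II$- and $III$-contributions need to be controlled.

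For the $II(u,v)$-contribution I will use the factorization $|\wh{u}|^2-|\wh{v}|^2 = \wh{u}\,\overline{\wh{w}}+\wh{w}\,\overline{\wh{v}}$ to extract explicit copies of $\wh{w}$, which produces the pointwise bound $\bra{n}^{-4/3}|\wh{w}(n)|^2|\wh{v}(n)|(|\wh{u}(n)|+|\wh{v}(n)|)$. Rewriting the weight as $\bra{n}^{-4/3-4s}(\bra{n}^s|\wh{w}|)^2(\bra{n}^s|\wh{v}|)(\bra{n}^s|\wh{u}|+\bra{n}^s|\wh{v}|)$ and using $-4/3-4s\le 0$ (equivalent to $s\ge-1/3$), the prefactor is $\le 1$. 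H\"older on $\ell^4$, together with $\norm{\cdot}_{\ell^4}\le\norm{\cdot}_{\ell^2}$ for sequences, then delivers the spatial bound $\norm{u}_{H^s}\norm{v}_{H^s}\norm{w}_{H^s}^2+\norm{v}_{H^s}^2\norm{w}_{H^s}^2$, after which the embedding $Y_T^{s,\frac12}\hookrightarrow L_T^\infty H^s$ and $T\le 1$ yield the desired factor $(\norm{u}_{Y_T^{s,\frac12}}+\norm{v}_{Y_T^{s,\frac12}})^2\norm{w}_{Y_T^{s,\frac12}}^2$.

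The substantial step is the $III(u,v,w)$-contribution, which produces three 4-linear non-resonant forms typified by
\[
\int_0^t\sum_{n\in\Z}\sum_{\N_n}\bra{n}^{-4/3}|\wh{w}(n)||\wh{v}(n_1)||\wh{v}(n_2)||\wh{w}(n_3)|\,ds,
\]
together with analogues in which one $\wh{v}$ is swapped for a $\wh{u}$. Here the pure product rule $H^{-1/3}\cdot H^{-1/3}\cdot H^{-1/3}\not\subset H^{-2/3}$ in one dimension, so the non-resonance must be exploited. My plan is to repeat the argument from Proposition~\ref{prop:nonres-trilinear} with the extra $\bra{n}^{-4/3}$ weight inserted: after converting all four factors into $H^s$-densities via $\bra{\cdot}^{\pm s}$, I will use the resonance bound $|G|\gtrsim|n_1-n_2||n_2-n_3|(n^\ast)^2$ from \eqref{eq:resonant relation2} to trade one modulation weight for $|n_1-n_2|^{-(1/2+\epsilon)}|n_2-n_3|^{-(1/2+\epsilon)}(n^\ast)^{-1-2\epsilon}$, change variables to $n'=n_1-n_2\ne 0$ (or $n_2-n_3\ne 0$), and close via the $L^4_{t,x}$ Strichartz estimate of Lemma~\ref{lem:L4} and the Sobolev-in-time embedding of Lemma~\ref{lem:sobolev}.

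The main obstacle will be the endpoint $s=-1/3$: as in Remark~\ref{rem:threshold} many of the relevant exponents are critical and dyadic sums in $n$ threaten a logarithmic divergence. The key trick is to keep $\bra{n}^{-4/3}$ unsummed as an $\ell^\infty$-in-$n$ gain — in the same spirit as the $\ell^\infty$-versus-$\ell^1$ choice already exploited in \eqref{eq:gap} for the existence part — so that the combined decay from $\bra{n}^{-4/3}$ and $(n^\ast)^{-1-2\epsilon}$ in the resonance factor is just enough to absorb the $\bra{n_j}^{-s}=\bra{n_j}^{1/3}$ growth and render the remaining $\sum_{n'\ne 0}|n'|^{-(1+\epsilon)}$ sum convergent, delivering \eqref{eq:sym-1}.
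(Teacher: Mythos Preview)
Your overall structure is sound and matches the paper's: derive the $H^{-2/3}$ energy identity for $w$, observe that the purely imaginary linear/resonant-in-$w$ pieces vanish, and bound the remaining resonant and non-resonant contributions separately. Two points deserve comment.

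\medskip
\textbf{The non-resonant $III$ term.} You propose to redo the whole argument of Proposition~\ref{prop:nonres-trilinear} with the extra $\bra{n}^{-4/3}$ weight, and you anticipate an endpoint obstruction at $s=-1/3$ that forces an $\ell^\infty$-in-$n$ trick. In fact there is no obstruction, and no new analysis is needed. The key observation the paper makes is simply that $\bra{n}^{-4/3}\le \bra{n}^{-2/3}=\bra{n}^{2s'}$ with $s'=-1/3$, so the $\bra{n}^{-4/3}$-weighted four-linear sum is dominated by the $\bra{n}^{-2/3}$-weighted one, and the latter is exactly the estimate \eqref{eq:trilinear} at regularity $s'=-1/3$ (applied multilinearly with $u,v,w$ in the various slots; the proof of \eqref{eq:trilinear} is symmetric in the four factors). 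Since Proposition~\ref{prop:nonres-trilinear} holds on the whole range $-1/2<s'<0$, the value $s'=-1/3$ is interior, not endpoint. One then passes from $Y^{-1/3,1/2}_T$ to $Y^{s,1/2}_T$ using $s\ge -1/3$. Your plan would also succeed, but it repeats work already done; the whole purpose of going down to $H^{-2/3}$ is precisely to land safely inside the range where the non-resonant trilinear estimate is already available.

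\medskip
\textbf{A minor slip in the resonant $II$ term.} The embedding $Y^{s,1/2}_T\hookrightarrow L^\infty_T H^s$ that you invoke is false at the borderline $b=1/2$; one needs $b>1/2$ for $Y^{s,b}\hookrightarrow C_tH^s$. The fix is immediate and is what the paper does: after arriving at the pointwise-in-time bound $\norm{u(t)}_{H^{-1/3}}\norm{v(t)}_{H^{-1/3}}\norm{w(t)}_{H^{-1/3}}^2$, apply H\"older in $t$ with four $L^4_t$ factors and then Lemma~\ref{lem:sobolev} with $p=4$, $b\ge 1/4$, which gives $\norm{\cdot}_{L^4_TH^{-1/3}}\lesssim\norm{\cdot}_{Y^{-1/3,1/4}}\le\norm{\cdot}_{Y^{s,1/2}}$. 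With this adjustment your argument for the resonant piece coincides with the paper's.
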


\begin{proof}
From \eqref{eq:WNLS4}, direct computation gives
\begin{equation}\label{eq:w}
\begin{aligned}
\pt \wh{w}(n) -in^4\wh{w}(n) &= i\Big(|\wh{u}(n)|^2\wh{u}(n) - |\wh{v}(n)|^2\wh{v}(n)\Big)\\
&-i\sum_{\N_n}\Big\{\wh{w}(n_1)\owh{u}(n_2)\wh{u}(n_3)+\wh{v}(n_1)\owh{w}(n_2)\wh{u}(n_3)+\wh{v}(n_1)\owh{v}(n_2)\wh{w}(n_3)\Big\},
\end{aligned}
\end{equation}
which implies
\begin{equation}\label{eq:w energy}
\begin{aligned}
\pt|\wh{w}(n)|^2 &= 2\mbox{Im}\left[\wh{v}(n)\owh{w}(n)\wh{u}(n)\owh{w}(n) \right] \\
&-2\mbox{Im}\left[\sum_{\N_n}\big[\wh{w}(n_1)\owh{u}(n_2)\wh{u}(n_3)+\wh{v}(n_1)\owh{w}(n_2)\wh{u}(n_3)+\wh{v}(n_1)\owh{v}(n_2)\wh{w}(n_3)\big]\owh{w}(n) \right]\\
&=:A(t,n)+B(t,n).
\end{aligned}
\end{equation}
Since 
\[\sum_{n \in \Z} \bra{n}^{-\frac43}\wh{v}(n)\owh{w}(n)\wh{u}(n)\owh{w}(n) \lesssim \norm{u(t)}_{H^{-\frac13}}\norm{v(t)}_{H^{-\frac13}}\norm{w(t)}_{H^{-\frac13}}^2,\]
by the H\"older inequality and Lemma \ref{lem:sobolev}, we have
\begin{equation}\label{eq:symmetry-3}
\begin{aligned}
\int_0^t \sum_{n\in \Z} \bra{n}^{-\frac43} A(s,n) \; ds &\lesssim \norm{u}_{L_t^4H^{-\frac13}}\norm{v}_{L_t^4H^{-\frac13}}\norm{w}_{L_t^4H^{-\frac13}}^2 \\
&\lesssim \norm{u}_{Y_T^{-\frac13,\frac14}}\norm{v}_{Y_T^{-\frac13,\frac14}}\norm{w}_{Y_T^{-\frac13,\frac14}}^2.
\end{aligned}
\end{equation}
Moreover, we can apply Proposition \ref{prop:nonres-trilinear} directly for $B(t,n)$ to obtain that
\begin{equation}\label{eq:symmetry-4}
\int_0^t \sum_{n\in \Z} \bra{n}^{-\frac43} B(s,n) \; ds \lesssim (\norm{u}_{Y_T^{-\frac13,\frac12}}^2 + \norm{u}_{Y_T^{-\frac13,\frac12}}\norm{v}_{Y_T^{-\frac13,\frac12}} + \norm{v}_{Y_T^{-\frac13,\frac12}}^2)\norm{w}_{Y_T^{-\frac13,\frac12}}^2.
\end{equation}
InTogether with \eqref{eq:symmetry-3} and \eqref{eq:symmetry-4} in \eqref{eq:w energy}, we have \eqref{eq:sym-1}.
\end{proof}

Now we are ready to prove Proposition \ref{prop:main2}.

\begin{proof}[Proof of Proposition \ref{prop:main2}]
The proof is almost similar as the proof of Proposition \ref{prop:main}, while only the \textbf{case I} is slightly different. Hence it suffices to prove the case when $|n_1|\sim|n_2|\sim|n_3|\sim|n|$. We can rewrite \eqref{eq:main2} as follows:
\begin{equation}\label{eq:symmetry-1}
\begin{aligned}
&\sup_{n \in \Z} \Bigg|\mbox{Im} \int_0^t  \sum_{\N_n} \wh{v}(s,n_1)\owh{w}(s,n_2)\wh{u}(s,n_3)\owh{u}(s,n) \; ds\Bigg|\\
&=\sup_{n \in \Z}\Bigg|\mbox{Im}\int_0^t  \sum_{\N_n} e^{-is(n_1^4 - n_2^4 + n_3^4 - n^4)}\big(e^{-isn_1^4} \wh{v}(s,n_1)\big)\big(\ol{e^{-isn_2^4} \wh{w}(s,n_2)} \big) \big(e^{-isn_3^4} \wh{u}(s,n_3) \big) \big(\ol{e^{-isn^4} \wh{u}(s,n)} \big) \; ds\Bigg|.
\end{aligned}
\end{equation}
By the integration by parts with respect to the time variable $s$, the right-hand side of \eqref{eq:symmetry-1}, under the frequency relations with the same manner \eqref{eq:main1}, can be dominated by\footnote{The boundary term at $s=0$ cannot be appeared, since $w(0,x) = 0$ in this case.}
\[\begin{aligned}
&\sup_{N \ge 1}\sum_{n\in I_N}\Bigg|\sum_{\N_n} \frac{1}{iH} \wh{v}_{med}(t,n_1)\owh{w}_{med}(t,n_2)\wh{u}_{med}(t,n_3)\owh{u}(t,n)\Bigg|\\
&+\sup_{N \ge 1}\sum_{n\in I_N}\Bigg|\int_0^t \sum_{\N_n} \frac{e^{is(n_1^4 - n_2^4 + n_3^3 - n^4)}}{iH} \\
&\hspace{7em}\times \frac{d}{ds}\Big[\big(e^{-isn_1^4} \wh{v}_{med}(s,n_1)\big)\big(\ol{e^{-isn_2^4} \wh{w}_{med}(s,n_2)} \big) \big(e^{-isn_3^4} \wh{u}_{med}(s,n_3) \big) \big(\ol{e^{-isn^4} \wh{u}(s,n)} \big) \Big]\; ds\Bigg|\\
&:=\wt{I} + \wt{II}
\end{aligned}\]
where $H$ is defined as in \eqref{eq:resonant function}.

We first consider the term $\wt{I}$. In view of \eqref{eq:nonresonant estimate}, the term $\wt{I}$ can be bounded by
\[\norm{v(t)}_{H^s}\norm{u(t)}_{H^s}^2\norm{w(t)}_{H^s},\]
for $-1/2 < s$. However, due to the lack of the symmetry of the equation \eqref{eq:w}, we cannot control $\norm{w(t)}_{H^s}$ in $Y^{s,\frac12}$. Thus we use a trick in the estimate \eqref{eq:nonresonant estimate} to control $\wt{I}$. For $0 < \epsilon $, let 
\[\begin{aligned}
&\wh{g}_1(n_1) = \bra{n_1}^{-\frac13}|\wh{v}(n_1)|, \hspace{2em} \wh{g}_2(n_2) = \bra{n_2}^{-1+\epsilon}|\owh{w}(-n_2)|;\\ 
&\wh{g}_3(n_3) = \bra{n_3}^{-\frac13}|\wh{u}(n_3)|, \hspace{2em} \wh{g}_4(n) = \bra{n}^{-\frac13}|\owh{u}(-n)|.
\end{aligned}\]
Then, from the fact that
\[\frac{1}{|n_1-n_2||n|^2} \le \frac{1}{|n_1-n_2|}|n_1n_3n|^{-1/3}|n_2|^{-1+\epsilon}N^{-\ep},\]
we obtain
\[\begin{aligned}
\wt{I} &\lesssim \sup_{N\ge1}N^{-\ep}\sum_{n, \N_{n}} \frac{1}{|n_1 -n_2|} \wh{g}_1(n_1)\wh{g}_2(-n_2)\wh{g}_3(n_3)\wh{g}_4(-n) \\
&\le \sup_{N\ge1}N^{-\ep}\sum_{n_1,n, n' \neq 0} \frac{1}{|n'|}\wh{g}_1(n_1)\wh{g}_2(n'-n_1)\wh{g}_3(n-n')\wh{g}_4(-n)\\
&=\sup_{N\ge1}N^{-\ep}\sum_{0 < |n'| \le N} \frac{1}{|n'|}\wh{g_1g_2}(n')\wh{g_3g_4}(-n')\\
&\lesssim \sup_{N\ge1}N^{-\ep}\log N\norm{\wh{g_1g_2}}_{\ell^{\infty}}\norm{\wh{g_3g_4}}_{\ell^{\infty}}\\
&\lesssim \norm{v(t)}_{H^{-\frac13}}\norm{u(t)}_{H^{-\frac13}}^2\norm{w(t)}_{H^{-1+\epsilon}}.
\end{aligned}\]
We choose $0 < \epsilon \le \frac13$ to satisfy $\norm{w(t)}_{H^{-1 + \epsilon}} \lesssim \norm{w(t)}_{H^{-\frac23}}$. By  Corollary \ref{cor:trilinear} for $\norm{v(t)}_{H^{-\frac13}}$ and $\norm{u(t)}_{H^{-\frac13}}$ and Lemma \ref{lem:sym} for $\norm{w(t)}_{H^{-1+\epsilon}}$, we obtain
\begin{equation}\label{eq:1}
\wt{I} \lesssim C(\norm{u_0}_{H^s}, \norm{v_0}_{H^s}, \norm{u}_{Y_T^{s,\frac12}}, \norm{v}_{Y_T^{s,\frac12}}) \norm{w}_{Y_T^{s,\frac12}}.
\end{equation}

Now we consider the term $\wt{II}$. The estimate of $\wt{II}$ can be obtained by using the same way as the estimate of $II$ in \textbf{Case I} in the proof of Proposition \ref{prop:main}. Indeed, if we take the time derivative at the solution $u$ or $v$, $\wt{II}$ can be reduced similarly as \eqref{eq:hhhh2}. Otherwise, from the following observation 
\[\begin{aligned}
\pt \left( e^{-itn^4}\wh{w}(n) \right) &= e^{-itn^4} \left( \pt \wh{w}(n) - in^4 \wh{w}(n) \right)\\
&= e^{-itn^4} \Bigg( i\Big\{\wh{w}(n)\owh{u}(n)\wh{u}(n) + \wh{v}(n)\owh{w}(n)\wh{u}(n) + \wh{v}(n)\owh{v}(n)\wh{w}(n)\Big\} \\
&- i\sum_{\N_n}\Big\{\wh{w}(n_1)\owh{u}(n_2)\wh{u}(n_3) + \wh{v}(n_1)\owh{w}(n_2)\wh{u}(n_3) + \wh{v}(n_1)\owh{v}(n_2)\wh{w}(n_3)\Big\} \Bigg),
\end{aligned}\]
we can still apply the same argument as the estimate of \eqref{eq:hhhh2}. Hence, we have
\begin{equation}\label{eq:2}
\wt{II} \lesssim C(\norm{u_0}_{H^s}, \norm{v_0}_{H^s}, \norm{u}_{Y_T^{s,\frac12}}, \norm{v}_{Y_T^{s,\frac12}}) \norm{w}_{Y_T^{s,\frac12}}.
\end{equation}
Together with \eqref{eq:1} and \eqref{eq:2}, we complete the proof of Proposition \ref{prop:main2}.
\end{proof}

As an immediate consequence of Proposition \ref{prop:main2}, we have the following corollary:

\begin{corollary}\label{cor:main2}
Let $-1/3 \le s < 0$, $0 < T \le 1$, $t \in [-T,T]$ and $u_0 \in C^{\infty}(\T)$. Suppose that $u$ and $v$ are complex-valued smooth solutions to \eqref{eq:WNLS4} on $[-T,T]$ with $u_0 = v_0$ and $u, v \in Y_T^{s,1/2}$. Then the following estimate holds:
\begin{equation}\label{eq:main3}
\begin{aligned}
\sup_{n \in \Z} \Big||\wh{u}(n)|^2-|\wh{v}(n)|^2\Big| \le C(\norm{u_0}_{H^s}, \norm{v_0}_{H^s}, \norm{u}_{Y_T^{s,\frac12}}, \norm{v}_{Y_T^{s,\frac12}}) \norm{w}_{Y_T^{s,\frac12}}.
\end{aligned}
\end{equation}
\end{corollary}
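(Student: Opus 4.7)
The plan is to reduce \eqref{eq:main3} to Proposition \ref{prop:main2} via a telescoping decomposition. First I would compute $\pt(|\wh{u}(t,n)|^2 - |\wh{v}(t,n)|^2)$ from \eqref{eq:WNLS4} applied to both $u$ and $v$. As in \eqref{eq:energy}, the resonant contributions $i|\wh{u}(n)|^2\wh{u}(n)$ and $i|\wh{v}(n)|^2\wh{v}(n)$ cancel against their conjugates when paired with $\owh{u}(n)$, $\owh{v}(n)$, so only the non-resonant cubic sum survives:
\begin{equation*}
\pt(|\wh{u}(t,n)|^2 - |\wh{v}(t,n)|^2) = -2\mbox{Im}\sum_{\N_n}\Big[\wh{u}(n_1)\owh{u}(n_2)\wh{u}(n_3)\owh{u}(n) - \wh{v}(n_1)\owh{v}(n_2)\wh{v}(n_3)\owh{v}(n)\Big].
\end{equation*}

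Next, writing $u = v + w$ and expanding telescopically gives a sum of four terms, each containing exactly one factor of $\wh{w}$ (or $\owh{w}$) at one of the positions $n_1, n_2, n_3, n$, with the other three factors drawn from $\{\wh{u},\wh{v}\}$. Integrating from $0$ to $t$ and using $u_0 = v_0$, so that $|\wh{u}(0,n)|^2=|\wh{v}(0,n)|^2$, the quantity $|\wh{u}(t,n)|^2 - |\wh{v}(t,n)|^2$ becomes the imaginary part of the time-integral of these four telescoping terms. The corollary is thus reduced to bounding $\sup_n$ of each of the four integrals by $C\norm{w}_{Y_T^{s,\frac12}}$.

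The position-$n_2$ term is exactly the expression controlled by Proposition \ref{prop:main2}. The $n_1$ and $n_3$ contributions follow by the same argument, exploiting the symmetry of $\N_n$ and of the resonance function $H$ in \eqref{eq:resonant function} among the three inner variables; the proof of Proposition \ref{prop:main2} is written in a position-insensitive manner and accommodates this swap directly. For the position-$n$ term, where $\owh{w}(n)$ sits outside and the inner trilinear factor is purely in $v$, the same normal form reduction applies since \eqref{eq:resonant relation2} and \eqref{eq:modulation0} treat all four indices symmetrically. In every case the bound produces the constant $C = C(\norm{u_0}_{H^s},\norm{v_0}_{H^s},\norm{u}_{Y_T^{s,\frac12}},\norm{v}_{Y_T^{s,\frac12}})$, yielding \eqref{eq:main3}. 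The only delicate point--the control of the boundary term in the high-frequency case without an $H^s$ energy identity for $w$--is already handled inside Proposition \ref{prop:main2} via Lemma \ref{lem:sym}, so no new cancellation mechanism is needed; this corollary is essentially bookkeeping on top of that proposition.
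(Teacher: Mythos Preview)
Your proposal is correct and follows essentially the same approach as the paper: both compute $\pt(|\wh{u}(n)|^2-|\wh{v}(n)|^2)$, telescope the resulting non-resonant quadrilinear expression into four terms each containing one factor of $w$, and then invoke Proposition~\ref{prop:main2} on each term by symmetry. The paper's proof is terser---it simply asserts that Proposition~\ref{prop:main2} applies to all four terms with possibly different constants---while you spell out the symmetry considerations and the role of Lemma~\ref{lem:sym} for the boundary term, but the logical structure is identical.
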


\begin{proof}
We observe from \eqref{eq:res-nonres} that
\begin{equation}\label{eq:difference}
\begin{aligned}
\sup_{n \in \Z}\big||\wh{u}(n)|^2 - |\wh{v}(n)|^2\big|&\\
=\sup_{n \in \Z} \Bigg|2\mbox{Im}\Bigg[ \int_0^t  \sum_{\N_n} &\Big\{\wh{w}(n_1)\owh{u}(n_2)\wh{u}(n_3)\owh{u}(n)+\wh{v}(n_1)\owh{w}(n_2)\wh{u}(n_3)\owh{u}(n)\\
&+\wh{v}(n_1)\owh{v}(n_2)\wh{w}(n_3)\owh{u}(n) + \wh{v}(n_1)\owh{v}(n_2)\wh{v}(n_3)\owh{w}(n)\Big\} \; ds \Bigg]\Bigg|.
\end{aligned}
\end{equation}
Proposition \ref{prop:main2} can be directly applied to not only the second term, but also the other terms in the right-hand side of \eqref{eq:difference}, while the dependence of the constant $C$ is slightly different. Hence we obtain \eqref{eq:main3}.
\end{proof}

Hence, by Corollary \ref{cor:main2} with \eqref{eq:initial} and \eqref{eq:solution class}, we obtain
\begin{equation}\label{eq:nonlinear6}
\begin{aligned}
\norm{II(u,v)}_{L_T^2H^s} &\lesssim T^{\beta}\Big(\sup_{t \in [-T,T]}\sup_{n \in \Z}\big||\wh{u}(t,n)|^2 - |\wh{v}(n)|^2 \big|\Big)\norm{v}_{Y_{T,j}^{s,\frac12}}\\
&\lesssim T^{\beta}C_2(K,L)\norm{w}_{Y_T^{s,\frac12}}.
\end{aligned}
\end{equation}

Thus, by gathering \eqref{eq:nonlinear3}, \eqref{eq:nonlinear4} and \eqref{eq:nonlinear6}, we can choose $T'>0$ small enough to have
\[\norm{w}_{Y_{T'}^{s,\frac12}} \le c\norm{w}_{Y_{T'}^{s,\frac12}},\]
for some $0 < c < 1$, which implies $w \equiv 0$ on $[-T',T']$. By repeating this procedure, we can obtain the uniqueness of the solution on $[-T,T]$.

\subsection{Continuity of the flow map}
The continuous dependence of the solution with respect to the initial data can be shown by the same way to show the existence part. Only different thing is that we do not need to extract a subsequence in the limiting process due to the uniqueness part. We omit the details and thus we complete the proof of Theorem \ref{thm:main}.

\section{Symplectic Non-squeezing}\label{sec:nonsqueezing}
In this section, we prove the non-squeezing property of \eqref{eq:NLS1}. We follow the argument in Colliander, Keel, Staffilani, Takaoka and Tao \cite{CKSTT2005} in the context of KdV flow. We first state the finite dimensional non-squeezing property of \eqref{eq:truncated equation} as an application of Gromov's non-squeezing theorem.
\begin{lemma}\label{lem:Nonsqueezing of trun. flow}
Let $N \geq 1$, $ 0 <r <R$, $u_* \in P_{\le N} L^2(\T)$, $0< |n_0| \leq N$, $ z \in \C$ and $T>0$. Let $\Phi^N(t) :  P_{\le N} L^2(\T)  \to P_{\le N} L^2(\T)$ be the flow map to \eqref{eq:truncated equation}.
Then 
\begin{equation*}
\Phi^N(T)(B_R^N ( u_* )) \not \subseteq C_{n_0,r}^N(z),
\end{equation*}
where $B_R^N (u_{\ast})$ and $C_{n_0,r}^N(z)$ are finite dimensional restriction of a ball and a cylinder defined as follows:
\[\begin{aligned}
&B^{N}_R (u_{\ast}) := \set{u \in P_{\le N}L^2(\T) : \norm{u-u_{\ast}}_{L^2} \le R}, \\
&C^{N}_{n,r} (z) := \set{u \in P_{\le N}L^2(\T) : |\wh{u}(n) - z| \leq r}.
\end{aligned}\]
\end{lemma}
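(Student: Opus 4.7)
The plan is to recognize this as a direct application of Gromov's classical non-squeezing theorem to the finite-dimensional Hamiltonian flow $\Phi^N(t)$ on $P_{\le N}L^2(\T)$, after verifying three things: that the ambient space is a standard finite-dimensional symplectic vector space, that the ball/cylinder data is modeled on the standard ball and a standard symplectic cylinder, and that the flow map is a symplectomorphism.

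First, I would set up the symplectic model. In the Fourier variables $\wh{u}(n)=a_n+ib_n$ for $|n|\le N$, Parseval's identity realizes $(P_{\le N}L^2(\T),\langle\cdot,\cdot\rangle_{L^2})$ as a Euclidean space of real dimension $4N+2$, and (up to a universal constant fixed by the Fourier normalization on $\T$) the symplectic form $\omega_0(u,v)=-\mathrm{Im}\int_\T u\ol v\,dx$ becomes the standard form $\sum_{|n|\le N} da_n\wedge db_n$. In these coordinates the ball $B_R^N(u_\ast)$ is a round Euclidean ball of radius $R$, while the cylinder $C_{n_0,r}^N(z)$ is a round cylinder of radius $r$ based on the two-plane spanned by $(a_{n_0},b_{n_0})$, and this two-plane is a genuine symplectic subspace (the restriction of $\omega_0$ to it is the standard area form). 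The bookkeeping of normalization constants across $\|\cdot\|_{L^2}$, $|\wh u(n_0)-z|$, and the Darboux coordinates is the only point that requires attention; this is where I would be most careful, though no real difficulty arises.

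Second, I would verify that $\Phi^N(T)$ is a globally defined symplectomorphism. The subspace $P_{\le N}L^2(\T)$ is invariant under \eqref{eq:truncated equation}, and on this subspace the equation takes the Hamiltonian form $\dot u=i\nabla_u H^N[u]$ with
\[
H^N[u]=\tfrac12\int_\T |\px^2 u|^2\,dx-\tfrac{\mu}{4}\int_\T |u|^4\,dx,\qquad u\in P_{\le N}L^2(\T),
\]
so the flow preserves $\omega_0$. Testing \eqref{eq:truncated equation} against $\ol u$ and taking imaginary parts shows mass conservation $\|\Phi^N(t)u_0\|_{L^2}=\|u_0\|_{L^2}$; since the equation is an ODE in a finite-dimensional space, this a priori bound rules out finite-time blow-up and gives global existence of $\Phi^N(t)$.

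Finally, I would invoke Gromov's theorem: any symplectic embedding of a Euclidean ball of radius $R$ into a standard symplectic cylinder of radius $r$ forces $R\le r$. If, contrary to the conclusion of the lemma, $\Phi^N(T)(B_R^N(u_\ast))\subseteq C_{n_0,r}^N(z)$, then the restriction $\Phi^N(T)\big|_{B_R^N(u_\ast)}$ is precisely such a symplectic embedding, contradicting the hypothesis $r<R$. Hence the inclusion must fail, completing the proof.
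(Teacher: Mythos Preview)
Your proposal is correct and matches the paper's approach: the paper does not give a detailed proof of this lemma but simply states it as a direct application of Gromov's non-squeezing theorem, noting that the truncated equation is a finite-dimensional Hamiltonian system on $P_{\le N}L^2(\T)$ whose flow is a symplectomorphism. Your write-up fills in exactly the verifications the paper leaves implicit (the symplectic identification via Fourier coefficients, global existence from mass conservation, and the Hamiltonian structure ensuring $\Phi^N(T)$ preserves $\omega_0$), so there is nothing to add.
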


The main task in this section is to show that Lemma \ref{lem:Nonsqueezing of trun. flow} still holds true when $N \to \infty$. To do this, it is enough to show the following proposition, which makes hold that the flow map of \eqref{eq:truncated equation} approximates to the original flow map of \eqref{eq:NLS1} in the strong $L^2$-topology.

\begin{proposition}\label{prop:approximation}
Let $T>0$ and $N\gg1$. Let $u_0 \in L^2$ with frequency support $|n| \le N$. Then we have
\[\sup_{|t| \le T} \norm{P_{\le N^{1/2}}(\Phi(t)u_0 - \Phi^N(t)u_0)}_{L^2} \leq C\Big(T,\norm{u_0}_{L^2}\Big)N^{-\sigma}\]
for some $\sigma >0$.
\end{proposition}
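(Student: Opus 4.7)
The plan is to work with the integral equation for $w := P_{\le N}\Phi(t)u_0 - \Phi^N(t)u_0$, which has Fourier support in $|n|\le N$ and satisfies $w(0)=0$ (since $u_0$ has spectrum in $|n|\le N$). Because $P_{\le N^{1/2}}$ is an $L^2$-contraction and $Z^{0,\frac12}\hookrightarrow C_tL^2$, it suffices to prove
\[
\|w\|_{Z^{0,\frac12}_T}\lesssim N^{-\sigma}.
\]
The a priori $Z^{0,\frac12}_T$ bounds on $u:=\Phi(t)u_0$ and $u^N:=\Phi^N(t)u_0$ follow from the standard $L^2$ local theory for cubic NLS, and mass conservation $\|u(t)\|_{L^2}=\|u^N(t)\|_{L^2}=\|u_0\|_{L^2}$ makes these uniform on sub-intervals of a length $\delta=\delta(\|u_0\|_{L^2})$ that can be reused for every iteration.

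The core step is to estimate $|u|^2u-|u^N|^2u^N$ in $Z^{0,-\frac12}_T$. Writing $h:=P_{>N}u$ so that $u-u^N=w+h$, I would split the difference into its resonant and non-resonant pieces. A naive trilinear expansion $a^3-b^3=(a-b)a^2+\cdots$ in $w+h$ for the \emph{resonant} piece creates the term $-2i\|h\|_{L^2}^2\,\hat v(n)$ (coming from the mass-type resonance $\N_R(h,u,u)$, which does not vanish at $|n|\le N$ because $\sum_{n_1}\hat h(n_1)\overline{\hat u(n_1)}=\|h\|_{L^2}^2$); this term is uniformly bounded but carries no smallness in $N$. The cure is to use mass conservation \emph{before} expanding: for $|n|\le N$ one has $\hat u(n)=\hat v(n)$ and $\|u\|_{L^2}^2=\|u^N\|_{L^2}^2=\|u_0\|_{L^2}^2$, so the resonant contribution collapses to
\[
\big(|\hat v(n)|^2+|\hat u^N(n)|^2\big)\hat w(n)\;+\;\hat v(n)\hat u^N(n)\overline{\hat w(n)}\;-\;2\|u_0\|_{L^2}^2\,\hat w(n),
\]
each summand being in $\N_R$-form with $w$ (or $\bar w$) in one slot. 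Lemma \ref{lem:resonant} then bounds the $Z^{0,-\frac12}$ norm of this by $(\|u\|_{Z^{0,\frac12}}^2+\|u^N\|_{Z^{0,\frac12}}^2+\|u_0\|_{L^2}^2)\|w\|_{Z^{0,\frac12}}$. For the \emph{non-resonant} piece, the usual trilinear splitting produces $w$-slot terms (controlled by Lemma \ref{lem:trilinear L2} as $\lesssim\|w\|_{Z^{0,\frac12}}(\|u\|_{Z^{0,\frac12}}^2+\|u^N\|_{Z^{0,\frac12}}^2)$, with the $N_{\max}^{-\frac12+}$ factor discarded) and $h$-slot terms; for the latter, $\supp\hat h\subset\{|n|>N\}$ forces $N_{\max}\gtrsim N$, and the smoothing factor in \eqref{eq:bilinear-1}--\eqref{eq:bilinear-2} yields $\lesssim N^{-\frac12+}\|u\|_{Z^{0,\frac12}}^3$.

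Inserting these into the linear $Z^{0,\pm\frac12}$ estimate on $[0,\delta]$ gives
\[
\|w\|_{Z^{0,\frac12}_\delta}\le\|w(0)\|_{L^2}+C\delta^{\alpha}\Big[C_1(\|u_0\|_{L^2})\|w\|_{Z^{0,\frac12}_\delta}+C_2(\|u_0\|_{L^2})\,N^{-\frac12+}\Big]
\]
for some $\alpha>0$ gained by time localisation. Choosing $\delta$ small enough to absorb the $\|w\|$-term yields $\|w(\delta)\|_{L^2}\lesssim\|w(0)\|_{L^2}+N^{-\frac12+}$ on each sub-interval; iterating $\lceil T/\delta\rceil$ times (a number depending only on $T$ and $\|u_0\|_{L^2}$, thanks to mass conservation) and summing additively propagates the decay and gives $\sup_{|t|\le T}\|w(t)\|_{L^2}\le C(T,\|u_0\|_{L^2})\,N^{-\sigma}$ for any $\sigma<\frac12$, whence $\sup_{|t|\le T}\|P_{\le N^{1/2}}(\Phi(t)u_0-\Phi^N(t)u_0)\|_{L^2}\lesssim N^{-\sigma}$. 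The main obstacle throughout is precisely the resonant-mass cancellation just described: without the regrouping via $\|u\|_{L^2}=\|u^N\|_{L^2}$, the $\|h\|_{L^2}^2$ contribution in the resonant piece would obstruct any $N$-decay, and the argument relies on localising every $N$-dependence to the non-resonant piece so that the smoothing factor of Lemma \ref{lem:trilinear L2} can do its work.
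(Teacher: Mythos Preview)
Your argument is correct and rests on the same two ingredients the paper uses: (i) mass conservation to force the resonant difference $\N_R(u)-\N_R(u^N)$ at frequencies $|n|\le N$ into a sum of cubic terms each carrying one factor of $\wh w$, and (ii) the $N_{\max}^{-1/2+}$ smoothing in Lemma~\ref{lem:trilinear L2} to make every non-resonant term containing $h=P_{>N}u$ an $O(N^{-1/2+})$ error.

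The organization, however, differs from the paper. The paper follows the CKSTT template and breaks the proof into two auxiliary lemmas: Lemma~\ref{lem:approx1} (stability of low frequencies under high-frequency perturbations of the data for the \emph{full} flow) and Lemma~\ref{lem:approx2} (short-time comparison of $\Phi$ and $\Phi^N$), then iterates. The need for Lemma~\ref{lem:approx1} in that scheme comes from the fact that after one short-time step $\Phi(T')u_0$ is no longer frequency-localized, so one cannot re-apply Lemma~\ref{lem:approx2} directly; one first compares $\Phi(T')u_0$ with a truncated datum via Lemma~\ref{lem:approx1}. Your approach sidesteps this by tracking $w=P_{\le N}\Phi(t)u_0-\Phi^N(t)u_0$ globally: the equation for $w$ has the same structure on every sub-interval, mass conservation persists for both flows, and the a~priori $Z^{0,1/2}$ bounds are renewed on each sub-interval by the conserved $L^2$ norm, so the iteration is just an additive accumulation of $N^{-1/2+}$ errors. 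This is more direct and in fact gives the slightly stronger conclusion $\sup_{|t|\le T}\norm{P_{\le N}\Phi(t)u_0-\Phi^N(t)u_0}_{L^2}\lesssim N^{-\sigma}$, so the $P_{\le N^{1/2}}$ projection in the statement is not needed in your version.
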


Proposition \ref{prop:approximation} is deduced by the following two lemmas: 

\begin{lemma}\label{lem:approx1}
Let $N'\ge1$ be fixed. Let $u_0, \ul{u_0} \in L^2(\T)$ with $P_{\le N'} u_0 = P_{\le N'}\ul{u_0}$ \footnote{Similarly as the argument in Bourgain's work \cite{Bourgain1994}, the high frequency perturbation condition can be replaced by $\norm{u_0}_{L^2(\T)}=\norm{\ul{u_0}}_{L^2(\T)}$ thanks to the mass conservation law \eqref{eq:mass}.}. Then, if $T'>0$ is sufficiently small depending on $\norm{u_0}_{L^2(\T)}$ and $\norm{\ul{u_0}}_{L^2(\T)}$, we have
\[\sup_{|t| \le T'} \norm{P_{\le N' - (N')^{\frac12}}(\Phi(t)u_0 - \Phi(t)\ul{u_0})}_{L^2} \leq C\Big(T,\norm{u_0}_{L^2}, \norm{\ul{u_0}}_{L^2}\Big) N'^{-\sigma}\]
for some $\sigma>0$.
\end{lemma}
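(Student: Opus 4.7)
My plan is to control the difference $w := \Phi(t)u_0 - \Phi(t)\ul u_0$ at very low frequencies by combining the standard $L^2$ local theory for 4NLS with the trilinear gain of Lemma \ref{lem:trilinear L2}, using the fact that $P_{\le M}w$ vanishes initially to close the estimate via a short-time contraction. First, I would fix $T' = T'(\|u_0\|_{L^2}, \|\ul u_0\|_{L^2})>0$ small enough that the standard $Z^{0,1/2}$-theory places both $u := \Phi(t)u_0$ and $\ul u := \Phi(t)\ul u_0$ in $Z^{0,1/2}_{T'}$ with norms $\le C_0(\|u_0\|_{L^2},\|\ul u_0\|_{L^2})$. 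The difference satisfies
\[
i\pt w + \px^4 w = \mu\bigl(|u|^2 w + u\bar w\,\ul u + |\ul u|^2 w\bigr),
\]
and the hypothesis $P_{\le N'} u_0 = P_{\le N'}\ul u_0$ (together with the footnote's replacement $\|u_0\|_{L^2}=\|\ul u_0\|_{L^2}$, needed to kill the otherwise non-decaying mass-difference forcing $(\|u\|_{L^2}^2-\|\ul u\|_{L^2}^2)\wh u(n)$ appearing in the resonant part) yields $P_{\le M}w(0)=0$ for $M := N' - (N')^{1/2}$.

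By Duhamel, I split the nonlinearity into its resonant and non-resonant parts as in Section \ref{sec:preliminaries}, and decompose $w = w_L + w_H$ with $w_L := P_{\le M}w$ and $w_H := P_{>M}w$; note $\|w_H\|_{L_t^\infty L_x^2} \lesssim C_0$ by mass conservation. For the resonant contribution at $|n|\le M$, the pointwise identity $\bigl||a|^2-|b|^2\bigr|\le (|a|+|b|)|a-b|$ combined with the equal-mass assumption gives $|\wh{\mathcal R}(n)|\le CC_0^2|\wh w(n)|$, yielding a feedback estimate $\lesssim T'^{\alpha}C_0^2\|w_L\|_{Z^{0,1/2}_{T'}}$ after time integration and invocation of the linear estimate. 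For the non-resonant contribution I would apply Lemma \ref{lem:trilinear L2} to each of the three trilinear pieces. Those in which the $w$-slot carries $w_L$ are again feedback terms of the same type. The crucial pieces are those in which the $w$-slot carries $w_H$: since $w_H$ is supported at frequencies $>M\sim N'$, we have $N_{max}\gtrsim N'$ in every dyadic block, and dyadic summation via Cauchy--Schwarz in the frequency of $w_H$ yields a genuine gain $\lesssim (N')^{-1/2+}C_0^3$.

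Assembling via the linear estimate for $Z^{0,1/2}_{T'}$ and $P_{\le M}w(0)=0$,
\[
\|w_L\|_{Z^{0,1/2}_{T'}} \le CT'^{\alpha} C_0^2 \|w_L\|_{Z^{0,1/2}_{T'}} + C(N')^{-1/2+}C_0^3,
\]
for some $\alpha>0$. Choosing $T'$ small enough (depending only on $C_0$) absorbs the first term on the right, yielding $\|w_L\|_{Z^{0,1/2}_{T'}} \lesssim (N')^{-1/2+}$ and hence the claimed bound with any $0<\sigma<1/2$ via the embedding $Z^{0,1/2}_{T'}\hookrightarrow C_tL^2_x$. The main technical obstacle I anticipate is the $w_H$ estimate: one must verify carefully that, after dyadically decomposing $w_H=\sum_{N>M}P_Nw$ and applying both \eqref{eq:bilinear-1} and \eqref{eq:bilinear-2} (the two components of the $Z^{0,1/2}$-norm) to each resulting triple, the resulting series in $N$ is $\ell^2$-summable and collapses to exactly $M^{-1/2+}\|w\|_{X^{0,1/2}}$. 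A secondary delicate point is the mass-difference issue, which forces one either to impose the equal-$L^2$-norms hypothesis directly, as I do above, or to first conjugate by the gauge transformation $\mathcal G$ to work in the Wick-ordered setting where the mass resonance is absent.
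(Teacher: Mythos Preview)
Your proposal is correct and takes essentially the same approach as the paper: both use the $N_{max}^{-1/2+}$ gain of Lemma~\ref{lem:trilinear L2} to treat every non-resonant interaction carrying a high-frequency input as an $O((N')^{-\sigma})$ error, handle the resonant difference via the equal-mass assumption together with a pointwise-in-$n$ feedback bound, and close by a short-time contraction in $Z^{0,1/2}$. The only difference is organizational---the paper projects first and then subtracts (deriving approximate equations for $u_{lo}$ and $\ul u_{lo}$ separately before comparing them), whereas you subtract first and then project---but the resulting decompositions of the forcing are equivalent.
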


\begin{lemma}\label{lem:approx2}
Let $N' \ge 1$ be fixed. Let $u_0 \in L^2$ with frequency support $|n| \le N'$. Then, if $T'>0$ is sufficiently small depending on $\norm{u_0}_{L^2(\T)}$, we have
\[\sup_{|t| \le T'} \norm{P_{\le (N')^{1/2}}(\Phi(t)u_0 - \Phi^{N'}(t)u_0)}_{L^2} \leq C\Big(T',\norm{u_0}_{L^2}\Big)N'^{-\sigma}\]
for some $\sigma >0$.
\end{lemma}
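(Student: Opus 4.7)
The plan is to adapt the approximation-of-flows strategy of Colliander--Keel--Staffilani--Takaoka--Tao \cite{CKSTT2005}, combining the trilinear smoothing estimate of Lemma \ref{lem:trilinear L2} with the resonant estimate of Lemma \ref{lem:resonant}. Set $u=\Phi(t)u_0$ and $u^N=\Phi^{N'}(t)u_0$, and let $w = u - u^N$, so that $w(0)=0$. Standard $L^2$ local well-posedness for both \eqref{eq:NLS1} and \eqref{eq:truncated equation} produces a common existence time $T' = T'(\norm{u_0}_{L^2})$ on which $\norm{u}_{Z^{0,1/2}_{T'}} + \norm{u^N}_{Z^{0,1/2}_{T'}} \lesssim \norm{u_0}_{L^2}$, and mass conservation gives $\norm{u(t)}_{L^2} = \norm{u^N(t)}_{L^2} = \norm{u_0}_{L^2}$ throughout. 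Because $u^N$ is frequency-supported in $\{|n|\le N'\}$, the cubic product $|u^N|^2u^N$ lies in $\{|n|\le 3N'\}$, so for $N'\ge 1$ the projection $P_{\le N'^{1/2}}$ annihilates $P_{>N'}(|u^N|^2u^N)$. The Duhamel identity for $P_{\le N'^{1/2}}w$ therefore reads
\begin{equation*}
P_{\le N'^{1/2}}w(t) = -i\mu\int_0^t e^{i(t-s)\px^4}P_{\le N'^{1/2}}\bigl(|u|^2u - |u^N|^2u^N\bigr)(s)\,ds,
\end{equation*}
and the algebraic factorization $|u|^2u - |u^N|^2u^N = u\bar u w + u\bar w u^N + w\bar{u^N}u^N$ exhibits the integrand as a sum of three trilinear forms, each carrying exactly one factor of $w$.

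After applying the $Z^{0,1/2}$ linear estimate, the task reduces to bounding $\norm{P_{\le N'^{1/2}}(\N_R+\N_{NR})}_{Z^{0,-1/2}_{T'}}$ for each of these trilinear forms; the goal is to show that every contribution is either dominated by $N'^{-\sigma}C(\norm{u_0}_{L^2})$ or is linear in $\norm{P_{\le N'^{1/2}}w}_{Z^{0,1/2}_{T'}}$ with a small prefactor (hence absorbable into the left side for $T'$ small). For the resonant piece, a direct computation using mass conservation shows that the resonant part of the nonlinearity difference at frequency $n$ equals
\begin{equation*}
\bigl[2\norm{u_0}_{L^2}^2 - |\wh u(n)|^2 - |\wh{u^N}(n)|^2\bigr]\wh w(n) - \wh u(n)\overline{\wh w(n)}\wh{u^N}(n),
\end{equation*}
which for $|n|\le N'^{1/2}$ is linear in $\widehat{P_{\le N'^{1/2}}w}(n)$ with pointwise coefficient bounded by $3\norm{u_0}_{L^2}^2$. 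Combined with $\norm{\cdot}_{X^{0,-1/2}}\lesssim \norm{\cdot}_{L^2_{t,x}}$ and the embedding $Z^{0,1/2}_{T'}\hookrightarrow C([-T',T'];L^2)$, this yields $\norm{P_{\le N'^{1/2}}\N_R}_{Z^{0,-1/2}_{T'}}\lesssim T'^{1/2}\norm{u_0}_{L^2}^2\,\norm{P_{\le N'^{1/2}}w}_{Z^{0,1/2}_{T'}}$, which is absorbable for $T'$ small.

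For the non-resonant piece, I perform a Littlewood--Paley decomposition of each input and exploit the dichotomy forced by the output constraint $|n|\le N'^{1/2}$. If the maximum input frequency satisfies $N_{max}\le N'^{1/2}$, then the $w$-factor necessarily coincides with $P_{\le N'^{1/2}}w$, and Lemma \ref{lem:trilinear L2}, summed over dyadic pieces, bounds this contribution by $\lesssim T'^\theta \norm{u}_{Z^{0,1/2}_{T'}}\norm{u^N}_{Z^{0,1/2}_{T'}}\norm{P_{\le N'^{1/2}}w}_{Z^{0,1/2}_{T'}}$, which is absorbable. Otherwise $N_{max}>N'^{1/2}$, and Lemma \ref{lem:trilinear L2} directly supplies the smoothing factor $N_{max}^{-1/2+}\lesssim N'^{-1/4+}$; the remaining trilinear factors are controlled by $\norm{u_0}_{L^2}$ via the a priori bounds and the triangle inequality $\norm{w}_{Z^{0,1/2}_{T'}}\le \norm{u}_{Z^{0,1/2}_{T'}}+\norm{u^N}_{Z^{0,1/2}_{T'}}$, giving a total contribution $\lesssim N'^{-1/4+}C(\norm{u_0}_{L^2})$. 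Combining the two regimes and absorbing the linear-in-$\norm{P_{\le N'^{1/2}}w}$ terms yields $\norm{P_{\le N'^{1/2}}w}_{Z^{0,1/2}_{T'}}\lesssim N'^{-\sigma}C(T',\norm{u_0}_{L^2})$ for some $\sigma>0$, and the pointwise-in-$t$ conclusion follows from $Z^{0,1/2}_{T'}\hookrightarrow C([-T',T'];L^2)$.

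The principal obstacle lies in the bookkeeping for the non-resonant step: Lemma \ref{lem:trilinear L2} does not directly produce smallness in the low--low--low regime $N_{max}\le N'^{1/2}$, so one must exploit the bootstrap structure by identifying that regime as precisely the one in which the $w$-factor coincides with the target quantity $P_{\le N'^{1/2}}w$ and is therefore absorbable. A secondary subtlety is the cross-term $\wh u(n)\overline{\wh w(n)}\wh{u^N}(n)$ in the resonant piece, which is controlled analogously using $|\wh u(n)\wh{u^N}(n)|\le \norm{u_0}_{L^2}^2$ and the same embedding-plus-$T'^{1/2}$ argument.
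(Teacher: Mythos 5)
Your proposal is correct and runs on the same engine as the paper's argument: a common small-time $Z^{0,\frac12}$ local theory for both flows, mass conservation to reduce the (frequency-diagonal) resonant difference to a term linear in $P_{\le \sqrt{N'}}w$ that is absorbed for small $T'$, and the $N_{max}^{-\frac12+}$ smoothing of Lemma \ref{lem:trilinear L2} to turn every nonresonant interaction with a high-frequency factor into an $N'^{-\sigma}$ error. The only difference is packaging (and the choice of threshold $\sqrt{N'}$ instead of $M\approx N'$): you estimate $P_{\le\sqrt{N'}}(u-u^{N'})$ directly via Duhamel and a telescoped trilinear difference, whereas the paper derives effective equations for the low-frequency parts of the two flows, using $P_{\le M}P_{\le N'}=P_{\le M}$, and then compares them -- the two routes are equivalent.
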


\begin{remark}
We note in Lemma \ref{lem:approx1} that the low frequencies are stable on the flow $\Phi$ for high frequency perturbations of data, and in Lemma \ref{lem:approx2} that $\Phi^N$ approximates to $\Phi$ at low frequencies. 
\end{remark}

\begin{remark}\label{rem:deduction}
To deduce Proposition \ref{prop:approximation}, once we have Lemmas \ref{lem:approx1} and \ref{lem:approx2}, by dividing the whole time interval $[-T,T]$ into a finite number $M=M(T', \norm{u_0}_{L^2}, \norm{\ul{u_0}}_{L^2},s)$ of time intervals\footnote{Here $T'= T'(\norm{u_0}_{L^2},\norm{\ul{u_0}}_{L^2})>0$ is obtained from the standard local theory in $L^2$, and $T'$ in Lemmas \ref{lem:approx1} and \ref{lem:approx2} would suffice to be smaller than this $T'$.}, we can show that Proposition \ref{prop:approximation} holds on each interval iteratively, and hence so on the whole interval $[-T,T]$.
\end{remark}

We first prove Lemma \ref{lem:approx1}. We consider the following equation for the simplicity.
\begin{equation}\label{eq:NLS5}
u_t - i \px^4u = \N(u,u,u),
\end{equation}
where $\N(u,v,w) = \mu i u\ol{v}w$. We also use notations $N_R$ and $N_{NR}$ defined as in \eqref{eq:res. nonl.} and \eqref{eq:nonres. nonl.}, respectively.

\begin{proof}[Proof of Lemma \ref{lem:approx1}]
Let $u$ and $\ul{u}$ be solutions to \eqref{eq:NLS5} on the interval $[-T',T']$. From the local well-posedness theory, we have the local estimates
\begin{equation}\label{eq:small}
\norm{u}_{Z^{0,\frac12}} + \norm{\ul{u}}_{Z^{0, \frac12}} \le C,
\end{equation}
by choosing the sufficiently small time $T'$ depending on the $L^2$-norms of $u_0$ and $ \ul{u_0}$.

We split the solutions $u$ and $\underline u$ into the two portions using the following argument: Let $M \in \left[N' - \left(N'\right)^{\frac{1}{2}}, N'\right]$ be an integer. We separate $u$ as
\[u = u_{lo} +u_{hi},\]
where 
\[u_{lo} := P_{\le M}u, \hspace{1em} u_{hi}:= (1-P_{\le M})u.\]
From \eqref{eq:small}, we have
\begin{equation}\label{eq:split}
\norm{u_{lo}}_{Z^{0,\frac12}}, \hspace{0.1em} \norm{u_{hi}}_{Z^{0,\frac12}} \le C.
\end{equation}
We also split $\ul{u}$ and obtain the similar result as \eqref{eq:split} for $\underline u$. We apply $P_{\le M}$ to \eqref{eq:NLS5} to see that $u_{lo}$ obeys the equation
\begin{equation}\label{eq:low freq eq}
(\pt + \px^3)u_{lo} = P_{\le M}\N_{R}(u,u,u) + P_{\le M}\N_{NR}(u,u,u).
\end{equation}

In order to control the right-hand side of \eqref{eq:low freq eq}, we first define the \emph{error terms} to be any quantity with $Z^{0,-\frac12}$-norm of $O((N')^{-\sigma})$ in $\N_{NR}(u,u,u)$. Thanks to the $N_{max}^{-\frac12 +}$ factor in trilinear estimates \eqref{eq:bilinear-1} and \eqref{eq:bilinear-2}, all terms except for $P_{\le M}\N_{NR}(u_{lo},u_{lo},u_{lo})$ in $P_{\le M}\N_{NR}(u,u,u)$ are error terms. Thus, we conclude that $u_{lo}$ obeys the equation
\begin{equation}\label{eq:error1}
(\pt - i\px^4)u_{lo} = P_{\le M}\N_{R}(u, u, u) + P_{\le M}\N_{NR}(u_{lo},u_{lo},u_{lo}) + \mbox{error term}.
\end{equation} 
In the same manner, the function $\ul{u_{lo}}$ also obeys the same equation (with slightly different error terms)
\begin{equation}\label{eq:error2}
(\pt - i\px^4)\ul{u_{lo}} = P_{\le M}\N_{R}(\ul{u}, \ul{u}, \ul{u}) + P_{\le M}\N_{NR}(\ul{u_{lo}},\ul{u_{lo}},\ul{u_{lo}}) + \mbox{error term}.
\end{equation}
 
Now we consider $P_{\le M}\N_{R}(u, u, u) - P_{\le M}\N_{R}(\ul{u}, \ul{u}, \ul{u})$. From the $L^2$-conservation law and a simple calculation, we know that
\[\begin{aligned}
P_{\le M}&\N_{R}(u, u, u) - P_{\le M}\N_{R}(\ul{u}, \ul{u}, \ul{u})\\ 
&=\frac{i}{\sqrt{2\pi}}\sum_{|n| \le M}e^{inx}\left((\wh{u}(n)-\wh{\ul{u}}(n))|\wh{u}(n)|^2+\wh{\ul{u}}(n)(\ol{\wh{u}}(n)-\ol{\wh{\ul{u}}}(n))\wh{u}(n)+|\wh{\ul{u}}(n)|^2(\wh{u}(n)-\wh{\ul{u}}(n))\right)\\
&-\frac{2i}{\sqrt{2\pi}}\sum_{|n| \le M}e^{inx}(\norm{u_0}_{L^2}^2 - \norm{\ul{u_0}}_{L^2}^2)(\wh{u}(n)-\wh{\ul{u}}(n)),
\end{aligned}\]
and this in addition to \eqref{eq:resonant} and \eqref{eq:split} implies
\[\norm{P_{\le M}\N_{R}(u, u, u) - P_{\le M}\N_{R}(\ul{u}, \ul{u}, \ul{u})}_{Z^{0,-\frac12}} \le C'\norm{u_{lo}-\ul{u_{lo}}}_{Z^{0,\frac12}},\]
where $C'$ depends on $T', \norm{u_0}_{L^2}, \norm{\ul{u_0}}_{L^2}, \norm{u_{lo}}_{Z^{0,\frac12}}$, and $\norm{\ul{u_{lo}}}_{Z^{0,\frac12}}$.
 
Hence, since $u_{lo}(0) = \ul{u_{lo}}(0)$, we have from the standard local well-posedness theory in addition to a suitable small $T'$ and \eqref{eq:split} that
\[\norm{u_{lo} - \ul{u_{lo}}}_{Z^{s,\frac12}} \lesssim \left(N'\right)^{-\sigma},\]
which implies Lemma \ref{lem:approx1} by $Z^{s,\frac12} \subset C_{t}H^s$.
\end{proof}

The proof of Lemma \ref{lem:approx2} is quite similar as the proof of Lemma \ref{lem:approx1}. The key in the proof of Lemma \ref{lem:approx1} is to construct \eqref{eq:error1} and \eqref{eq:error2}. Due to the fact that $P_{\le N}P_{\le 2N} = P_{\le N}$, we need to obtain 
\[(\pt - i\px^4)u_{lo} = P_{\le M}\N_{R}(u, u, u) + P_{\le M}\N_{NR}(u_{lo},u_{lo},u_{lo}) + \mbox{error term}\]
and
\[(\pt - i\px^4)v_{lo} = P_{\le M}\N_{R}(v, v, v) + P_{\le M}\N_{NR}(v_{lo},v_{lo},v_{lo}) + \mbox{error term}.\]
However, the similar argument in the proof of Lemma \ref{lem:approx1} can be applied here, so we omit the detailed proof of Lemma \ref{lem:approx2}. Hence, Lemmas \ref{lem:approx1} and \ref{lem:approx2} in addition to Remark \ref{rem:deduction} implies Proposition \ref{prop:approximation}. The following is an immediate corollary of Proposition \ref{prop:approximation}:

\begin{corollary}\label{cor:main approx}
Let $n_0 \in \Z$, $T>0$, $A>0$ and $ 0 < \varepsilon \ll 1$. Then there exists a frequency $N_0 = N_0(n_0, T, \varepsilon, A)> |n_0|$ such that
\begin{equation*}
\left|\ft[\Phi(T)u_0 - \Phi^N(T)u_0](n_0)\right| \ll \varepsilon
\end{equation*}
for all $N \gg N_0$ and all $u_0 \in B_A^N(0) \subset  P_{\le N} L^2(\T)$.
\end{corollary}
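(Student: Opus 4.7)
The proof is an essentially immediate consequence of Proposition \ref{prop:approximation}. The strategy is to localize the single Fourier mode at $n_0$ inside the low-frequency projection $P_{\le N^{1/2}}$ that already appears in the statement of that proposition, and then pass from an $L^2$-bound to a pointwise Fourier bound via Plancherel.

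Concretely, I would first choose $N_0 = N_0(n_0,T,\varepsilon,A) > |n_0|^2$, so that every $N \gg N_0$ satisfies $|n_0| \le N^{1/2}$. For such $N$ the Fourier coefficient at $n_0$ is preserved by the projection $P_{\le N^{1/2}}$, hence
\[
\ft\bigl[\Phi(T)u_0 - \Phi^N(T)u_0\bigr](n_0) = \ft\bigl[P_{\le N^{1/2}}(\Phi(T)u_0 - \Phi^N(T)u_0)\bigr](n_0).
\]
The trivial Plancherel bound $|\wh{f}(n_0)| \le \norm{f}_{L^2}$ on $\T$ then gives
\[
\bigl|\ft[\Phi(T)u_0 - \Phi^N(T)u_0](n_0)\bigr| \le \norm{P_{\le N^{1/2}}(\Phi(T)u_0 - \Phi^N(T)u_0)}_{L^2}.
\]

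Since $u_0 \in B_A^N(0)$ means precisely that $u_0 \in P_{\le N}L^2(\T)$ with $\norm{u_0}_{L^2} \le A$, the hypotheses of Proposition \ref{prop:approximation} are satisfied and the right-hand side above is bounded by $C(T,A)N^{-\sigma}$. Enlarging $N_0$ if necessary so that $C(T,A)N_0^{-\sigma} \ll \varepsilon$ yields the claim uniformly in $u_0 \in B_A^N(0)$ for every $N \gg N_0$. I do not expect any genuine obstacle here: the threshold condition $N_0 > |n_0|^2$ is precisely what forces $N_0$ to depend on $n_0$, while the dependence on $T$, $A$, and $\varepsilon$ enters only through the quantitative constant $C(T,A)$ and the polynomial rate $N^{-\sigma}$ supplied by Proposition \ref{prop:approximation}.
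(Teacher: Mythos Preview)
Your argument is correct and matches the paper's approach: the paper simply declares this an immediate corollary of Proposition~\ref{prop:approximation} without further detail, and your write-up supplies exactly the routine steps one would expect (choosing $N_0 \ge |n_0|^2$ so that the mode $n_0$ falls under $P_{\le N^{1/2}}$, bounding a single Fourier coefficient by the $L^2$ norm, and using uniformity of $C(T,\norm{u_0}_{L^2})$ over $\norm{u_0}_{L^2}\le A$).
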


Now we are ready to prove Theorem \ref{thm:Nonsqueezing thm}.
\begin{proof}[Proof of Theorem \ref{thm:Nonsqueezing thm}]
For given $r,R > 0$ with $0< r < R$, we choose $0 < \varepsilon < \frac{R-r}{2}$. We also choose $A>0$ such that the ball $B^{\infty}_{A}(0)$ contains the ball $B^{\infty}_{R}(u_*)$. Since $u_* \in L^2$, we can choose $N > N_0(T,\varepsilon,n_0, A) \gg 1$ such that
\begin{equation}\label{eq:high}
\norm{(1-P_{\le N})u_*}_{L^2} \le \varepsilon.
\end{equation}
Since our choice of $\varepsilon$ still satisfies $r+\varepsilon < R-\varepsilon$, we apply Lemma \ref{lem:Nonsqueezing of trun. flow} to find initial data $u_{0,N} \in P_{\le N} L^2(\T)$ satisfying
\begin{equation}\label{eq:cut ball}
\norm{u_{0,N} - P_{\le N}u_*}_{L^2} \le R - \varepsilon
\end{equation}
and
\begin{equation}\label{eq:cut ball2}
\left|\ft_x[\Phi^N(T)u_{0,N}](n_0)-z\right| > r+\varepsilon.
\end{equation}
From \eqref{eq:high} and \eqref{eq:cut ball}, the triangle inequality yields
\[\norm{u_{0,N} - u_*}_{L^2} \le R.\]
For $u_0 := u_{0,N}$, we can conclude from the triangle inequality, \eqref{eq:cut ball2} and Corollary \ref{cor:main approx} that
\[\begin{aligned}
&\left|\ft_x[\Phi(T)u_0](n_0) - z\right| \\
\ge& \left|\ft_x[\Phi^N(T)u_{0,N}](n_0) - z\right| - \left|\ft_x[\Phi(T)u_0](n_0) - \ft_x[\Phi^N(T)u_0](n_0)\right| \\
 >& r+\varepsilon -\varepsilon = r,
\end{aligned}\]
which completes the proof.
\end{proof}


\begin{thebibliography}{alpha}
\bibitem{BKS2000} M. Ben-Artzi, H. Koch, J.C. Saut, \emph{Dispersion estimates for fourth order Schr\"odinger equations}, C. R. Math. Acad. Sci. Sér. 1 330 (2000) 87--92.

\bibitem{Bourgain1993} J. Bourgain, \emph{Fourier transform restriction phenomena for certain lattice subsets and applications to nonlinear evolution equations}, Part I, II, Geom. Funct. Anal. 3 (1993) 107--156, 209--262.

\bibitem{Bourgain1994} J. Bourgain, \emph{Approximation of solutions of the cubic nonlinear Schr\"odinger equations by finite-dimensional equations and nonsqueezing properties}, Int. Math. Res. Not., (1994), no.2, 79--90.

\bibitem{BGT2002} N. Burq, P. G\'erard, N. Tvzetkov, \emph{An instability property of the nonlinear Schr\"odinger equation on $S^d$}, Math. Res. Lett. 9 (2002), 323--335.

\bibitem{CCT2003} M. Christ, J. Colliander, T. Tao, \emph{Asymptotics, frequency modulation, and low regularity ill-posedness for canonical defocusing equations}, Amer. J. Math. 125 (2003), 1235--1293.

\bibitem{CKSTT2003} J. Colliander, M. Keel, G. Staffilani, H. Takaoka, T. Tao, \emph{Sharp global well-posedness for KdV and modified KdV on $\R$ and $\T$}, J. Amer. Math.
Soc., 16 (2003), 705--749.

\bibitem{CKSTT2004} J. Colliander, M. Keel, G. Staffilani, H. Takaoka, T. Tao, \emph{Multilinear estimates for periodic KdV equations, and applications}, J. Funct. Anal. 211
(2004), 173--218.

\bibitem{CKSTT2005} J. Colliander, M. Keel, G. Staffilani, H. Takaoka, T. Tao, \emph{Symplectic nonsqueezing of the Korteweg-de Vries flow}, Acta Math., 195 (2005), 197--252.

\bibitem{CO2012} J. Colliander, T. Oh, \emph{Almost sure well-posedness of the cubic nonlinear Schr\"odinger equation below $L^2(\T)$}, Duke Math. J. 161 (2012), no. 3, 367-414.

\bibitem{FIP2003} G. Fibich, B. Ilan, G. Papanicolaou, \emph{Self-focusing with fourth order dispersion}, SIAM J. Appl. Math. 62 (4) (2002) 1437--1462.

\bibitem{Gromov1985} M. Gromov, \emph{Pseudo holomorphic curves in symplectic manifolds},  Invent. Math. 82 (1985), no.2, 307--347.

\bibitem{GO2015} Z. Guo, T. Oh, \emph{Non existence of solutions for the periodic cubic NLS below $L^2$}, preprint,  arXiv:1510.06208 [math.AP].

\bibitem{GW2002} B. Guo, B. Wang, \emph{The global Cauchy problem and scattering of solutions for nonlinear Schr\"odinger equations in $H^s$}, Differential Integral Equations 15 (9) (2002) 1073--1083.

\bibitem{HHW2006} C. Hao, L. Hsiao, B. Wang, \emph{Well-posedness for the fourth-order Schr\"odinger equations}, J. Math. Anal. Appl. 320 (2006) 246--265.

\bibitem{HHW2007} C. Hao, L. Hsiao, B. Wang, \emph{Well-posedness of the Cauchy problem for the fourth-order Schr\"odinger equations in high dimensions}, J. Math. Anal. Appl. 328 (2007) 58--83.


\bibitem{HK2016} S. Hong, C. Kwak, \emph{Global well-posedness and Nonsqueezing property for the higher-order KdV-type flow}, J. Math. Anal. Appl. 441 1 (2016) 140--166.

\bibitem{HK2015} S. Hong, S. Kwon, \emph{Nonsqueezing property of the coupled KdV type system without Miura transform}, preprint, arXiv:1509.08114 [math.AP].

\bibitem{Karpman1996} V.I. Karpman, \emph{Stabilization of soliton instabilities by higher-order dispersion: Fourth order nonlinear Schr\"odinger-type equations}, Phys. Rev. E 53 (2) (1996) 1336--1339.

\bibitem{KS2000} V.I. Karpman, A.G. Shagalov, \emph{Stability of soliton described by nonlinear Schr\"odinger-type equations with higher-order dispersion}, Phys. D 144 (2000) 194--210.

\bibitem{KPV1996} C. E. Kenig, G. Ponce, L. Vega, \emph{A bilinear estimate with applications to the KdV equation}, J. Amer. Math. Soc. 9 (1996), no. 2, 573--603.

\bibitem{KVZ2016-1} R. Killip, M. Visan, X. Zhang, \emph{Finite-dimensional approximation and non-squeezing for the cubic nonlinear Schr\"odinger equation on $\R^2$}, preprint,  arXiv:1606.07738 [math.AP].

\bibitem{KVZ2016-2} R. Killip, M. Visan, X. Zhang, \emph{Symplectic non-squeezing for the cubic NLS on the line}, preprint, arXiv:1606.09467 [math.AP].

\bibitem{Kuksin1995} S. Kuksin, \emph{Infinite-dimensional symplectic capacities and a squeezing theorem for Hamiltonian PDE's}, Commun. Math. Phys. 167 (1995), no.3, 531--552.


\bibitem{Mendelson2014} D. Mendelson, \emph{Nonsqueezing for the cubic nonlinear Klein-Gordon equation on $\T^{3}$}, arXiv:1411.3659v1 [math.AP].

\bibitem{MT2017} T. Miyaji, Y. Tsutsumi, \emph{Local Well-posedness of the NLS Equation with Third Order Dispersion in Negative Sobolev Spaces}, Differential Integral Equations 31 (2018) 111--132.

\bibitem{MPV2016} L. Molinet, D. Pilod, S. Vento, \emph{On unconditional well-posedness for the periodic modified Korteweg-de Vries equation}, arXiv:1607.05483v2 [math.AP].

\bibitem{NTT2010} K. Nakanish, H. Takaoka, Y. Tsutsumi, \emph{Local well-posedness in low regularity of the mKdV equation with periodic boundary condition}, DCDS (2010) 28 4 1635--1654.

\bibitem{OT2016} T. Oh, N. Tzvetkov, \emph{Quasi-invariant Gaussian measures for the cubic forth order nonlinear Schr\"odinger equation}, Probab. Theory Relat. Fields 169 (3) (2017) 1121--1168.

\bibitem{OTW2017} T. Oh, N. Tzvetkov, Y. Wang, \emph{Invariance of the white noise for the cubic fourth order nonlinear Schrödinger equation on the circle}, preprint.

\bibitem{OW2017} T. Oh, Y. Wang, \emph{Global well-posedness of the periodic cubic fourth order NLS in negative Sobolev spaces}, preprint, arXiv:1707.02013 [math.AP].

\bibitem{Pausader2007} B. Pausader, \emph{Global well-posedness for energy critical fourth-order Schr\"odinger equations in the radial case}, Dyn. Partial Differ. Equ. 4 (3) (2007) 197--225.

\bibitem{Pausader2009-1} B. Pausader, \emph{The focusing energy-critical fourth-order Schr\"odinger equation with radial data}, DCDS-A, 24 (2009), 4, 1275--1292.

\bibitem{Pausader2009-2} B. Pausader \emph{The cubic fourth-order Schr\"odinger equation}, J. Funct. Anal., 256 (2009), 8, 2473--2517.

\bibitem{Roumegoux2010} D. Roum\'egoux, \emph{A symplectic non-squeezing theorem for BBM equation}, Dyn. Partial Differ. Equ. 7 (2010), no.4, 289--305.

\bibitem{Segata2006} J. Segata, \emph{Modified wave operators for the fourth-order non-linear Schrödinger-type equation with cubic nonlinearity}, Math. Methods Appl. Sci. 26 (15) (2006) 1785--1800.

\bibitem{TT2004} H. Takaoka, Y. Tsutsumi, \emph{Well-posedness of the Cauchy problem for the modified KdV equation with periodic boundary condition}, Int. Math. Res. Not., (2004) 3009--3040.

\bibitem{Tao2001} T. Tao, \emph{Multilinear weighted convolution of $L^2$ functions and applications to nonlinear dispersive equations}, Amer. J. Math. 123(5) (2001) 839--908.

\bibitem{Tao2006} T. Tao, \emph{Nonlinear Dispersive Equations : Local and Global Analysis},CBMS Reg.Conf.Ser.Math. vol.106 (2006).
\end{thebibliography}
\end{document}